\def\author#1{\gdef\autrun{\def\and{\unskip, }#1}\gdef\@author{#1}}
\def\email#1{e-mail: #1}
\def\keywords#1{\par\medskip
\noindent\textbf{Keywords.} #1}
\newtheorem{lemma}{Lemma}[section]
\newtheorem{theorem}{Theorem}
\newtheorem{definition}{Definition}[section]
\newtheorem{proposition}{Proposition}[section]
\newtheorem{remarks}{Remark}[section]
\newtheorem{corollary}{Corollary}[section]
\newcommand{\Lip}{\mathrm{Lip}}
\newcommand{\T}{\mathbb{T}}
\newcommand{\R}{\mathbb{R}}
\newcommand{\Z}{\mathbb{Z}}
\newcommand{\N}{\mathbb{N}}
\newcommand{\PP}{\mathcal{P}}
\newcommand{\MMM}{\mathcal{M}}
\newcommand{\CC}{\mathcal{C}}
\newcommand{\M}{\mathcal{M}}
\newcommand{\LL}{\mathcal{L}}
\DeclareMathOperator*{\argmin}{argmin} 
\DeclareMathOperator*{\supp}{spt}
\numberwithin{equation}{section}
\begin{document}

%%%%% To ease editing, add:

\baselineskip=15pt

%%%%%%%%%%%%%%%%

%% In the running head, give an abbreviation of the title. 
%\titlerunning{Weak KAM solutions of contact Hamilton-Jacobi equations}

\title{Relaxed Lagrangian Approach to First-Order Non-Convex Mean Field Type Control Problem}

\author{Cristian Mendico \thanks{Institut de Math\'ematique de Bourgogne, UMR 5584 CNRS, Universit\'e Bourgogne, 21000 Dijon, France; \email{\tt cristian.mendico@u-bourgogne.fr}} \and Kaizhi Wang \thanks{School of Mathematical Sciences, CMA-Shanghai, Shanghai Jiao Tong University, Shanghai 200240, China; \email{\tt kzwang@sjtu.edu.cn}} \and  Yuchen Xu \thanks{School of Mathematical Sciences, CMA-Shanghai, Shanghai Jiao Tong University, Shanghai 200240, China; \email{\tt math\_rain@sjtu.edu.cn}}}

%\date{\today}

\maketitle

%\address{Cristian Mendico: Dipartimento di Matematica, Universit\`a di Roma ``Tor Vergata'', Via della Ricerca Scientifica 1, 00133 Roma, Italy; \email{mendico@mat.uniroma2.it}
%\and Kaizhi Wang: School of Mathematical Sciences, Shanghai Jiao Tong University, Shanghai 200240, China; \email{kzwang@sjtu.edu.cn}}
%\and Jun Yan: School of Mathematical Sciences, Shanghai Jiao Tong University, Shanghai 200240, China; \email{math_rain@sjtu.edu.cn}}

%\subjclass{}

%%%%%%%%
\begin{abstract}
This paper addresses the existence of equilibria for Mean Field type Control problems of first-order with non-convex action functional. Introducing a relaxed Lagrangian approach on the Wasserstein space to handle the lack of convexity, we prove the existence of new relaxed Nash equilibria and we show that our existence result encompasses the classical Mean Field Control problem's existence result under convex data conditions.

\keywords{Mean Field Control, Relaxed Lagrangian Approach, Relaxed controls, Residual Neural Networks.}

\noindent\textbf{Mathematics Subject Classification (2020).} 49N80 - 49N90 - 68T07 - 93C10.
\end{abstract}

%\tableofcontents

%%%%%%%%%%%%%%%%%%%%%%%%%%%%%%%%%%%%%
%Section 1
\section{Introduction} \label{introduction}
\setcounter{equation}{0}
%The theory of Mean Field Games (MFG) deals with the analysis of multi-agent dynamical systems involving an infinite number of rational, indistinguishable and uncooperative players. This theory was proposed by Lasry and Lions \cite{MR2269875, MR2271747, MR2295621} and Huang, Malham\'e and Caines \cite{MR2346927} independently. The classical MFG Problem is formulated by a backward Hamilton-Jacobi equation and a forward Fokker-Planck equation modeling the time evolution of the players' optimal strategies, on the one hand, and the evolution of players in space according to such strategies, that is,
%\begin{equation}\label{MFG}
%\begin{cases}
%-\partial_{t} u(t, x) + H(x, Du(t,x), m(t))= 0, & (t, x) \in [0, T] \times \Omega
%\\
%\partial_{t} m(t) - \textrm{div}(m(t) D_{p}H(x, Du(t, x), m(t))) = 0, & (t, x) \in [0, T] \times \Omega
%\\
%u(T, x) = u_T(x), \quad \textrm{for}\; x \in \Omega, \quad m(0) = m_0. 
%\end{cases}
%\end{equation}
%In particular, following the classical notion of Nash equilibria, we say that the game \eqref{MFG} reaches a mean field equilibrium when all strategies and the mean field distribution are consistent, and no agent can unilaterally improve their outcome.
The motivation for this work stems from recent advances in deep learning (DL) and the application of techniques from optimal control and mean field theory. Specifically, deep neural networks can be considered as discrete-time nonlinear dynamical systems and their internal parameters can be recast as controls, allowing the formulation of the training process as an optimal control problem. Given a set of training data $(\xi, \zeta) \in \Omega$ distributed according to $\mathcal{M} \in \PP(\Omega)$ and an activation function $\varphi$, we construct a neural network with input $z \in \R^d$ and output
\begin{equation}\label{RESNET1}
	\frac{1}{n} \sum_{i=1}^{n} \beta_{t, i}\varphi(\alpha_{t,  i} z + \rho_{t,  i}\zeta_{t}) = \int_{\R^d} \beta \varphi(\alpha z + \rho\zeta_t)\; \nu^n_t(d\alpha, d\beta, d\rho),
\end{equation}
where $t$ denotes the index of the layer, $(\alpha, \beta, \rho)$ are the data of the neural network and 
\begin{equation*}
	%\nu_{t, n}(d\alpha, d\beta, d\rho) = \frac{1}{n} \sum_{i=1}^{n} \delta_{\{\alpha_{t, i}, \beta_{t, i}, \rho_{t, i}\}}. 
	\nu^n_t = \frac{1}{n} \sum_{i=1}^{n} \delta_{\{\alpha_{t,  i}, \beta_{t,  i}, \rho_{t,  i}\}}.
\end{equation*}
Thus, setting $a= (\alpha, \beta, \rho)$ and $\psi(z, a, \zeta_t) = \beta\varphi(\alpha z + \rho\zeta_t)$, the architecture of the neural network can be written as 
\begin{equation*}
	X_{t_{k+1}}^{\nu^n, \xi, \zeta} = X_{t_{k}}^{\nu^n, \xi, \zeta} + \frac{\Delta t}{n} \sum_{i=1}^{n} \psi(X_{t_{k}}^{\nu^n, \xi, \zeta}, a_{t_k,i}, \zeta_{t_{k}}),
\end{equation*}
where $\Delta t = t_{k+1} - t_k$. Therefore, as $\Delta t  \downarrow 0$ and $n \uparrow \infty$ (heuristically) we obtain 
\begin{equation*}
	X_{t}^{\nu, \xi, \zeta} = \xi + \int_{0}^{t}\int \psi(X_{r}^{\nu, \xi, \zeta}, a, \zeta_r)\; \nu_r(da)dr
\end{equation*}
and the training procedure of such a model can be recast as an optimization problem on the Wasserstein space
\begin{equation}\label{RESNET2}
	\min_{\nu} \int_{\Omega} g(\xi, \zeta, \nu, X_{T}^{\nu, \xi, \zeta})\; \mathcal{M}({d\xi, d\zeta}). 
\end{equation}
So, looking at the minimization problem \eqref{RESNET2}, in this work we aim to extend the possibility of studying neural network architectures by means of the relaxed Lagrangian approach to the case of mean field state equation modeling, for instance, congestion of data or sparsity constraint.

%Mean Field Control (MFC), as the control-theoretic counterpart of MFG, focuses on optimizing the behavior of a centralized player, who influences a continuum of agents through a common policy.
%The equilibria of \eqref{MFG} is equivalent to looking at the minimizers of the following MFC problem
Having these models in mind, Mean Field Control (MFC) is a recent branch of control theory that deals with the optimal control of a large number of interacting agents. It arises when considering systems with a population of individual decision-makers (agents), where each agent's dynamics and objectives depend not only on their own states and actions but also on the aggregate behavior of the entire population.
From a variational viewpoint, a general MFC problem of first order (having deterministic dynamics) can be written as a minimization problem of the following type
\begin{equation*}
	\inf_{\dot{\gamma}} \int_0^T L(\gamma(t), \dot\gamma(t), m(t))\; dt + u_T(\gamma(T)),
\end{equation*}
where $m \in C([0, T], \PP(\Omega))$ is the probability distribution of all agents and $L$ is the Legendre transform of $H$. Here, the Lagrangian $L$ is convex in the control variable $\dot{\gamma}$.

We consider the MFC problem with general nonlinear controlled dynamics, whose state equations directly depend on the distribution of players, of the form
\begin{equation}\label{extantion}
	\min_{u: [0,T] \rightarrow \R^n} \int_0^T L(\gamma(t), u(t), m(t))\; dt  \quad\textrm{subject to} \quad \dot\gamma (t) = f(\gamma(t), u(t), m(t)),
\end{equation}
where the Lagrangian $L: \T^d \times \R^n \times \PP(\T^d) \to \R$ is non-convex with respect to the control variable $u(t) \in \R^n$.
The convexity assumption of the Lagrangian function, with respect to the control, is needed even if this is not always the case when considering some real life applications such as biological modeling, pricing dynamics, multi-agent reinforcement learning and deep learning (\cite{zbMATH06721976,MR3627592,MR3891852,MR4215224,lee2020controlling,martinez2024symmetries,MR4550416}).
For the case that $L$ is convex, such a problem is classically solved by using the so-called Lagrangian approach, see for instance \cite{zbMATH07061009, MR4132067, zbMATH07408117, zbMATH07139232}.
In case of stochastic systems, such an analysis has been developed in \cite{zbMATH06195704, 
	zbMATH06227443, zbMATH06721976, zbMATH06721977, zbMATH06191712}. However, to the best of the authors' knowledge, no direct results are available for the first-order case.

We address this challenge by generalizing the Lagrangian approach to non-convex mean field optimization problems taking inspiration from the well-known relaxed control approach \cite{zbMATH04030334, zbMATH00042424, zbMATH01341810}.
The relaxation method, we propose here, transforms the non-convex optimal control problem (\ref{extantion}) into an optimal control problem over probability measures on the control space (\ref{optimization problem}). %Such an approach was first proposed by Laker \cite{THM3.7}, where the author dealt with the existence of MFC equilibria for McKean-Vlasov type problems. Then, in \cite{MR3891852} the authors used the relaxed Lagrangian approach to derive the Hamilton-Jacobi-Bellman equation and the Pontryagin optimality conditions for resNet type MFC problems. Following this,  in \cite{jabir2021meanfieldneuralodesrelaxed} the problem of finding Pontryagin optimality conditions for bayesian neural type MFC problems was addressed. Notably, both the Lagrangian and the state equation in the above manuscripts are not directly related with the distribution of all players.
In particular, once the Nash equilibria is found we have, consequently, constructed the optimal neural network architecture for the data we are interested in.

\subsection*{Relaxation method}
Here, we proceed to show the relaxation procedure on Wasserstein space. 

Fix a time horizon $T>0$. Let $m_0 \in \PP \left( \T^d \right)$ denote the initial distribution of all agents. Denote by $\MMM( \left[ 0, T \right] \times \R^n)$ the space of all Borel measures on $\left[ 0, T \right] \times \R^n$. 
Define the set $\PP_U$ by
$$
\PP_U : = \left\{ \mu \in \MMM( \left[ 0, T \right] \times \R^n)\, \middle \vert \,\, \exists \left(\mu_t\right)_{t \in [0,T]} \subset \PP \left( \R^n \right), s.t.\,\, \mu = \LL_{[0,T]} \otimes \mu_t \right\},
$$
where $\mu = \LL_{[0,T]} \otimes \mu_t$ means that for any Borel map $g: [0,T] \times \R^n \rightarrow [0,+\infty]$, the following holds
$$
\int_{[0,T] \times \R^n} g(t,u) \mu(dt,du) = \int_0^T \int_{\R^n} g(t,u) \mu_t(du)dt.
$$
Similar to (\ref{extantion}), we consider the following relaxed state equation:
\begin{equation}\label{traj_1}
	\begin{cases}
		\dot\gamma(t) = \int_{\R^n} f \left( \gamma(t), u,m(t)\right) \mu_t(du) , & t \in [0, T]
		\\
		\gamma(0) = x, & x \in \T^d
	\end{cases}
\end{equation}
where $\mu \in \PP_U$ represents the strategy and $m \in \CC ( [0,T], \PP( \T^d))$ is the distribution of all agents with $m(0)=m_0$.
Therefore, the aim of each agent is to choose a strategy such that the cost function attains the minimum, which is defined by
%Each rational, indistinguishable and uncooperative player $i$ has private state process $\gamma^i$, whose dynamic system is given by (\ref{traj_1}). % with the initial state $x \in \T^d$, the strategy $\mu^i \in \PP_U$ and the evolution of the distribution $m \in \CC \left([0,T] , \PP \left( \T^d \right) \right)$. 
%Player $i$ aims to find an optimal strategy $\mu^{i*} \in \PP_U$ such that the cost function attains the minimum, which is defined by
$$
J^m (\gamma, \mu) = \int_0^T \int_{\R^n} L \left( \gamma (t), u, m(t) \right) \mu (dt,du).
$$
%attains the minimum. where $\gamma^i$ and $m$ are both related to $\mu^i$.

Define
\begin{align*}
	\Gamma_T : = \left\{ \gamma \in  AC \left( [0,T], \T^d \right)\, \right\vert \,\,
	& \exists (x,\mu) \in \T^d \times \PP_U, \exists m \in \CC \left( [0,T], \PP\left( \T^d \right) \right)\,\, \text{with} \,\, m(0) = m_0, \\
	&\left. s.t. \, \gamma \,\, \text{is the solution of the state equation} \, (\ref{traj_1}) \right\}.		
\end{align*}
Let $\Gamma_T$ be endowed with the uniform norm $\Vert \cdot \Vert _{\infty}$.
Let $m\in \CC \left([0,T], \PP \left( \T^d \right) \right)$ be the map such that $m(0)=m_0$, and let $P \in \PP \left( \Gamma_T \times \PP_U \right)$ be the probability measure such that every state-measure pair $\left( \gamma, \mu \right) \in \supp(P)$ satisfies
\begin{equation} \label{222_1}
	\dot{\gamma}(t) = \int_{\R^n} f \left( \gamma(t), u, m(t) \right) \mu_t(du).%, \quad \forall t \in [0,T].
\end{equation}
The total cost of all agents is defined by
$$
J\left( m,P \right) : = \int_{\Gamma_T \times \PP_U} J^m \left( \gamma, \mu \right) P \left( d \gamma, d \mu \right).
$$
Since the map $m$ represents the state distribution of all agents while the support of $P$ is the state processes and the strategies of all agents, % there exist some relations between these two variables. Therefore, 
the MFC problem is
\begin{equation} \label{optimization problem}
\inf_{P} J\left( \left( e_t \sharp \pi_1 \sharp P \right)_{t \in [0,T]},P \right),
\end{equation}
where the infimum is taken over all the measures $P \in \PP \left( \Gamma_T \times \PP_U \right)$ such that any state-measure pair $\left( \gamma, \mu \right) \in \supp(P)$ satisfies (\ref{222_1}). See Appendix \ref{assumtions and definitions} for details of the evaluation map $e_t$ and the push-forward measure $\pi_1 \sharp P \in \PP (\Gamma_T)$.
%The evaluation map $e_t$ is defined by
%$$
%e_t(\gamma) = \gamma(t), \quad \forall \gamma \in \Gamma_T, \forall t \in [0,T],
%$$
%and the push-forward measure $\pi_1 \sharp P \in \PP (\Gamma_T)$ is defined by
%$$
%\pi_1 \sharp P \left( B \right) : = P \left( \pi_1^{-1} \left(B \right) \right), \quad \forall B \in \mathcal{B}\left( \Gamma_T \right).
%$$

\subsection{Assumptions}
\label{assumptions}
%the assumptions for f and L
%{\it From now on, fix three constants $p,q, p^\prime \in \R$ with $q>p \geq 1$ and $q \geq p^\prime \geq 1$.}
\textbf{From now on, fix two constants $p,q \in \R$ with $q>p \geq 1$.}

\medskip

Let $L(x,u,\nu): \T^d \times \R^n \times \PP \left( \T^d \right) \to \R$ be a Lagrangian satisfying the following assumptions.
\begin{enumerate}[\bfseries (L1)]
	\item \label{L1_1} For any $\nu \in \PP \left( \T^d \right)$, the function $(x,u) \mapsto L(x, u, \nu)$ is of class $\CC^2$. 
	
	\item \label{L2_1} There is a modulus function $\omega: [0, +\infty) \rightarrow [0, +\infty)$ such that
	$$
	\vert L\left( x,u,\nu_1 \right) - L\left( x,u,\nu_2 \right) \vert \leq \omega \left( d_1 (\nu_1, \nu_2) \right), \quad \forall (x,u) \in \T^d \times \R^n,\,\, \forall \nu_1, \nu_2 \in \PP ( \T^d).
	$$
	See Appendix \ref{assumtions and definitions} and \ref{measure_theory} for details of the modulus function $\omega$ and the Kantorovich-Rubinstein distance $d_1$.
	
	\item \label{L3_1} There is a constant $C_1 \in \R$ such that
	$$
	\left\vert L \left( x,u,\nu \right) \right\vert \leq C_1 \left( 1+ \vert u \vert^{q} \right) , \quad \forall (x,u,\nu) \in \T^d \times \R^n \times \PP ( \T^d).
	$$
	%$$
	%{C_2+ C_3 \left\vert u \right\vert^{p^\prime} \leq L \left( x,u,\nu \right) \leq C_1 \left( 1+ \vert u \vert^{p^\prime} \right) }, \quad \forall (x,u,\nu) \in \T^d \times \R^n \times \PP ( \T^d).
	%$$
	
	\item \label{L4_1} There is a constant $C_2\in \R$ such that
	\begin{equation*}
		\left\vert \frac{\partial L}{\partial x} \left( x,u,\nu \right) \right\vert \leq C_2 \left( 1+ \vert u \vert^{q} \right),  \quad \forall (x,u,\nu) \in \T^d \times \R^n \times \PP ( \T^d).
	\end{equation*}
\end{enumerate}

%\begin{remarks} \label{C_1>C_3}
%It is not difficult to check that the constants in (L\ref{L3_1}) satisfy 
%$$
%C_1 \geq \max\{C_2, C_3\}\,\, \text{and} \,\,\, C_4 \geq \max\{C_5, C_6\}.
%$$
%\end{remarks}

%\medskip

Let $f(x,u,\nu): \T^d \times \R^n \times \PP \left( \T^d \right) \to \R^d$ be a continuous map satisfying the following assumptions. 
\begin{enumerate}[\bfseries (F1)]
	\item \label{f1_1} There is a constant $C >0$ such that
	$$
	\vert f(x,u,\nu) \vert \leq C\left( 1+ \vert u \vert ^p \right), \quad \forall (x,u,\nu) \in \T^d \times \R^n \times \PP ( \T^d).
	$$
	%$$
	%\vert f(x,u,\nu) \vert \leq C\left( 1+ \vert x \vert + \vert u \vert ^p \right), \quad \forall (x,u,\nu) \in \T^d \times \R^n \times \PP ( \T^d).
	%$$
	
	\item \label{f2_1} For any $(u, \nu)\in \R^n \times \PP \left( \T^d \right)$, the map $x \mapsto f(x,u,\nu)$ is Lipschitz continuous and
	\begin{equation*}
		\Lip_x(f) : = \sup_{ \substack{u\in \R^n,\,\, \nu \in \PP \left( \T^d \right) \\ x_1 \neq x_2 \in \T^d}} \frac{\left\vert f \left( x_1, u,\nu \right)-f \left( x_2, u,\nu \right)\right\vert }{\left \vert x_1 - x_2 \right\vert} < +\infty.
	\end{equation*}
	
	\item \label{f3_1} For any $(x,u)  \in \T^d \times \R^n$, the map $\nu \mapsto f(x,u,\nu)$ is Lipschitz continuous and
	\begin{equation*}
		\Lip_{\nu}(f) : = \sup_{ \substack{x \in \T^d ,\,\, u\in \R^n \\ \nu_1 \neq \nu_2 \in \PP \left( \T^d \right)} } \frac{\left\vert f \left( x, u,\nu_1 \right)-f \left( x, u,\nu_2 \right)\right\vert }{d_1 \left( \nu_1, \nu_2 \right)} < +\infty.
	\end{equation*}
\end{enumerate}

\subsection{Main results} \label{main_results}
%Due to the difficulty in locating minimizers within a non-compact set, we will seek compact subsets $\Gamma_T^R \subset \Gamma_T$ and $\PP_{U}^{R} \subset \PP_U$ to consider.
%It is important to note that while the set $\PP \left( \Gamma_T^R \times \PP_{U}^{R} \right)$ is indeed compact, this compactness cannot reveal the relation between the state variable and the measure variable. More precisely, for any $P \in \PP \left( \Gamma_T^R \times \PP_{U}^{R} \right)$ and any $(\gamma, \mu) \in \supp (P)$, there exists a player $i$, of whom the state process is $\gamma$ when the strategy is $\mu$.
%Therefore, for each $m \in \M_r$, which is also a compact set, we will define a compact subset $\PP_R(m) \subset \PP \left( \Gamma_T^R \times \PP_{U}^{R} \right)$ for our specific considerations. The sets $\Gamma_T^R$, $\PP_{U}^{R}$, $M_r$ and $\PP_R(m)$ will be stated as follows.
%
%
%\medskip

%Note that both distribution $m$ and the trajectory $\gamma$ are related to $\mu$.
%Every player aims to find the most appropriate strategy $\mu^* \in \PP_U$ such that the cost function, which is defined by
%$$
%J^m \left( \gamma, \mu \right) = \int_0^T \int_{\R^n} L \left( \gamma(t), u, m(t) \right) \mu(dt,du),
%$$
%attains the minimum.

%define compact set \PP_{U}^{R}
Let $R$ be a positive constant.
%Let $R$ be a constant with $R \geq \frac{(C_1-C_2)T}{C_3}$.
Define the compact subset $\PP_{U}^{R} \subset \PP_U$ by
$$
\PP_{U}^{R} : =\left\{ \mu \in \PP_U \, \middle\vert \,\, \int_0^T \int_{\R^{n}} |u|^{q} \mu( dt, d u) \leq R \right\},
$$
where $q$ is the positive constant defined as in Section \ref{assumptions}.
Define the metric $\tilde{d_1}$ on $\PP_{U}$ by
$$
\tilde{d_1} \left( \mu_1 , \mu_2 \right) := d_1 \left( \frac{\mu_1}{T} , \frac{\mu_2}{T} \right), \quad \forall \mu_1, \mu_2 \in \PP_{U}.
$$
%Note that $\PP_{U}^{R}$ is a compact subset of $\PP_U$ with respect to the $\tilde{d_1}$-topology.

%\medskip

For any $(x, \mu) \in \T^d \times \PP^R_U$ and any $m \in \CC \left( [0,T], \PP \left( \T^d \right) \right)$ such that $m(0)=m_0$, we denote by $\gamma$ the solution of the state equation (\ref{traj_1}). By Lemma \ref{bounded_of_dot(gamma)}, there exists a positive constant $K$, independent of $x$, $\mu$ and $m$, such that
$$
\left\Vert \dot{\gamma} \right\Vert_{L_{q/p} (\LL_{[0,T]})} := \left(\int_0^T \left\vert \dot{\gamma}(t) \right\vert^{q/p} dt \right)^{p/q} \leq K.
$$

%\medskip

%define compact set \M_r
Since $q>p\geq 1$, there exists a positive constant $r>1$ such that $p/q+1/r =1$. Define the subset $\M_r \subset \CC \left( [0,T], \PP \left( \T^d \right) \right)$ by the space of $1/r$-H\"older-continuous maps with H\"older seminorm $K$, i.e.,
$$
\M_r : = \left\{m \in\CC \left( [0,T], \PP ( \T^d ) \right)\, \middle\vert \,\, \sup_{t_1 \neq t_2 \in [0,T]} \frac{d_1 \left( m(t_1), m(t_2) \right)}{\left\vert t_1 - t_2 \right\vert ^{1/r}} \leq K,\,\, m(0)=m_0  \right\}.
$$
Define the metric $\hat{d_1}$ on $\CC \left( [0,T], \PP \left( \T^d \right) \right)$ by
$$
\hat{d_1} \left(m^1,m^2 \right) := \sup_{t \in [0,T]} d_1 \left( m^1(t), m^2(t) \right), \quad \forall m^1,m^2 \in \CC \left( [0,T], \PP ( \T^d ) \right).
$$
By Lemma \ref{M_r_compactness}, $\M_r$ is a compact subset of $\CC \left( [0,T], \PP \left( \T^d \right) \right)$ with respect to the $\hat{d_1}$-topology.

\medskip

%define compact set \Gamma_T^R
Define the subset $\Gamma_T^R \subset \Gamma_T$ by
\begin{align*}
	\Gamma_T^R : = \left\{ \gamma \in  AC \left( [0,T],\T^d \right) \, \right\vert \,\,
	&\exists\; (x,\mu,m) \in \T^d \times \PP_{U}^{R} \times \M_r, \\ &\left. s.t. \, \gamma \,\, \text{is the solution of the state equation} \, (\ref{traj_1}) \right\}.	
\end{align*}
Let $\Gamma_T^R$ also be endowed with the uniform norm $\Vert \cdot \Vert _{\infty}$.
By Lemma \ref{bounded of gamma and gamma^dot_1}, $\Gamma_T^R$ is a compact subset of $\Gamma_T$.  We denote by $\gamma=\gamma \left( \cdot; x, \mu, m \right) \in \Gamma_T^R$ the solution of the state equation (\ref{traj_1}) with the initial value $x \in \T^d$, the measure $\mu \in \PP^R_U$ and the function $m \in \M_r$.

%For each $t \in [0,T]$, define the evaluation map $e_t: \Gamma_T^R \rightarrow \T^d$ by
%$$
%e_t \left(\gamma \right) : = \gamma(t), \quad \forall \gamma \in \Gamma_T^R.
%$$

%\medskip

%define compact set \PP_R(m)
\begin{definition}
	Let $m \in \M_r$. Define $\PP_R(m)$ by the set of probability measures $P \in \PP \left( \Gamma_T^R \times \PP_{U}^{R} \right)$ satisfying the following conditions.
	\begin{enumerate}[(1)]
		\item $e_0 \sharp \left( \pi_1 \sharp P \right) = m_0$.
		
		\item For any $(t,v ) \in [0,T] \times \R^d$ and any open set $N \subset \Gamma_T^R \times \PP_{U}^{R}$, 
		\begin{equation}\label{new_con_1}
			\int_{N} \left \langle v, \gamma(t) - \gamma(0) - \int_0^t \int_{\R^n} f \left( \gamma(s), u,m(s) \right) \mu \left( ds,du \right) \right \rangle P \left( d\gamma, d \mu \right) =0 .
		\end{equation}
	\end{enumerate}
\end{definition}

%\medskip

%the problem we consider
For any $m \in \M_r$ and any $P\in \PP_R(m)$, define the MFC functional by
$$
J\left( m,P \right) : = \int_{\Gamma_T^R \times \PP_U^R} J^m \left( \gamma, \mu \right) P \left( d \gamma, d \mu \right),
$$
where
$$
J^m \left( \gamma, \mu \right) : = \int_0^T \int_{\R^n} L \left( \gamma(t),u,m(t) \right) \mu(dt,du).
$$
Define the associated minimization problem by
$$
R^*(m) : = \argmin_{P \in \PP_R(m)} \left \{J(m, P) \right\}, \quad \forall m \in \M_r.
$$

%\medskip

%definition of relaxed type MFG equilibrium
\begin{definition}
	Given $m_0 \in \PP \left( \T^d \right)$, we say that $P$ is a {\bf relaxed MFC equilibrium} for $m_0$ if
	$$
	P \in R^* \left( \left( e_t \sharp \left( \pi_1 \sharp P \right) \right)_{t \in [0,T]} \right) \,\, \text{and} \,\, \, e_0 \sharp \left( \pi_1 \sharp P \right) = m_0.
	$$
\end{definition}
The relaxed MFC equilibrium is well defined since $\left( e_t \sharp \left( \pi_1 \sharp P \right) \right)_{t \in [0,T]} \in \M_r$ for any $P \in \PP \left( \Gamma_T^R \times \PP^R_U  \right)$. See Remark \ref{well-defined} for details.

%\medskip
\medskip

%main result 1
The first main result is stated as follows.
\begin{theorem} \label{existence_1}
	Assume (L\ref{L1_1})-(L\ref{L4_1}) and (F\ref{f1_1})-(F\ref{f3_1}). There exists at least one relaxed MFC equilibrium. 
\end{theorem}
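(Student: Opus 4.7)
My plan is to apply the Kakutani--Fan--Glicksberg fixed point theorem to a multivalued map on the compact convex set $\M_r$. Define $\Psi: \M_r \to 2^{\M_r}$ by
$$
\Psi(m) := \{(e_t \sharp \pi_1 \sharp P)_{t \in [0,T]} : P \in R^*(m)\}.
$$
Any fixed point $m \in \Psi(m)$ produces a $P \in R^*(m)$ whose marginal flow equals $m$; since every $P \in \PP_R(m)$ satisfies $e_0 \sharp \pi_1 \sharp P = m_0$, such a $P$ is precisely a relaxed MFC equilibrium.

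\textbf{Preliminary properties.} I would first verify that the ambient spaces are suitable. By Lemma \ref{M_r_compactness}, $\M_r$ is $\hat d_1$-compact, and convexity follows from linearity of the $1/r$-H\"older seminorm together with convexity of $\PP(\T^d)$. Since $\Gamma_T^R$ and $\PP_U^R$ are compact (the latter by tightness from the uniform $q$-moment bound), $\PP(\Gamma_T^R \times \PP_U^R)$ is weak*-compact and convex. For each $m \in \M_r$, the set $\PP_R(m)$ is nonempty (e.g., push $m_0$ forward under the unique solution of \eqref{traj_1} driven by $\mu = \LL_{[0,T]} \otimes \delta_0$), convex (since \eqref{new_con_1} is linear in $P$), and weak*-closed, hence compact. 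The cost $J(m,\cdot)$ is linear and weak*-continuous on $\PP_R(m)$ because the integrand is bounded by $C_1(1+|u|^q)$ by (L\ref{L3_1}) while the $q$-moment is uniformly bounded, so $R^*(m)$ is nonempty, convex and compact. Finally, $\Psi(m) \subset \M_r$ follows from Lemma \ref{bounded_of_dot(gamma)} via H\"older's inequality with exponents $q/p$ and $r$, which transfers the $L^{q/p}$ bound on $\dot\gamma$ into the $1/r$-H\"older bound $K$ for the pushforward flow.

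\textbf{Closed graph: the main obstacle.} The crucial step is upper hemicontinuity of $\Psi$. Suppose $m_n \to m$ in $\hat d_1$, $P_n \in R^*(m_n)$, and $P_n \rightharpoonup P$ (along a subsequence extracted by weak*-compactness). Passing to the limit in the constraint \eqref{new_con_1} uses (F\ref{f3_1}) to obtain uniform convergence $f(\cdot,\cdot,m_n(\cdot)) \to f(\cdot,\cdot,m(\cdot))$, while the $q$-moment bound controls the integral against $\mu$, so $P \in \PP_R(m)$. The delicate point is optimality: for any competitor $Q \in \PP_R(m)$ I must produce $Q_n \in \PP_R(m_n)$ with $Q_n \rightharpoonup Q$. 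By uniqueness of solutions of \eqref{traj_1} (Gronwall via (F\ref{f2_1})), $Q$ is the pushforward under $(x,\mu) \mapsto (\gamma[x,\mu,m],\mu)$ of its marginal $\tilde Q$ on $\T^d \times \PP_U^R$ obtained via $(\gamma,\mu) \mapsto (\gamma(0),\mu)$; replacing $m$ by $m_n$ yields a natural candidate $Q_n$, which lies in $\PP_R(m_n)$ by construction. A Gronwall estimate combining (F\ref{f2_1}) and (F\ref{f3_1}) gives $\sup_{(x,\mu) \in \T^d \times \PP_U^R}\|\gamma[\cdot,\cdot,m_n] - \gamma[\cdot,\cdot,m]\|_\infty \to 0$, whence $Q_n \rightharpoonup Q$. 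Then the minimality inequality $J(m_n,P_n) \leq J(m_n,Q_n)$ passes to the limit using (L\ref{L2_1}) and the uniform $q$-moment bound, yielding $J(m,P) \leq J(m,Q)$, so $P \in R^*(m)$.

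\textbf{Conclusion.} With $\Psi$ having nonempty, convex, compact values and closed graph on the compact convex set $\M_r$, the Kakutani--Fan--Glicksberg theorem supplies a fixed point $m^* \in \Psi(m^*)$. Unpacking the definition provides a $P^* \in R^*(m^*)$ with $(e_t \sharp \pi_1 \sharp P^*)_{t \in [0,T]} = m^*$ and $e_0 \sharp \pi_1 \sharp P^* = m_0$, which is a relaxed MFC equilibrium.
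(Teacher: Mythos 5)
Your proposal is correct and rests on the same skeleton as the paper's argument --- a Kakutani-type fixed-point theorem applied to the best-response correspondence, powered by the compactness of $\Gamma_T^R$, $\PP_U^R$ and $\M_r$ and by the stability of the constraint \eqref{new_con_1} --- but it takes a genuinely different route in two places worth comparing. First, you set up the fixed point on the flow space $\M_r$ via $\Psi(m)=\{(e_t\sharp\pi_1\sharp P)_{t\in[0,T]}:P\in R^*(m)\}$, whereas the paper works on $\PP_0(\Gamma_T^R)$ with $E(\eta)=\{\pi_1\sharp P:P\in R^*((e_t\sharp\eta)_t)\}$; both are legitimate, but yours needs the extra observation (true, by joint convexity of $d_1$ coming from Kantorovich duality) that $\M_r$ is convex, a point the paper sidesteps by staying in a space of probability measures. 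Second, in the closed-graph step you explicitly build, for each competitor $Q\in\PP_R(m)$, a recovery sequence $Q_n\in\PP_R(m_n)$ with $Q_n\rightharpoonup Q$ by re-solving \eqref{traj_1} with $m_n$ in place of $m$ from the marginal of $Q$ on $\T^d\times\PP_U^R$; this is precisely the point that Proposition \ref{main_1} passes over when it invokes $J(m^i,P_i)\le J(m^i,\hat P)$ for a competitor $\hat P\in\PP_R(m)$ (an inequality that a priori requires $\hat P\in\PP_R(m^i)$), so your treatment of the optimality transfer is the more careful of the two. You also obtain nonemptiness of $R^*(m)$ by a direct Weierstrass argument on the compact set $\PP_R(m)$ rather than by the paper's measurable-selection construction; this is simpler and entirely sufficient for Theorem \ref{existence_1}. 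One correction: $J(m,\cdot)$ is not weak*-continuous, since $L$ may grow like $|u|^q$ while measures in $\PP_U^R$ carry only a bounded, not uniformly integrable, $q$-moment; it is only lower semicontinuous (cf.\ Lemma \ref{Jm_cont_1}(2) and Corollary \ref{lemma of s representation theorem}), which still yields existence of minimizers and closedness of $R^*(m)$, but you should state and use it as lower semicontinuity, both there and when passing to the limit in $J(m_n,P_n)$.
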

%Our methods depend on Kakutani's theorem by defining the set valued map by
%$$
%E(\eta) := \left\{ \pi_1 \sharp P \, \middle\vert \,\, P \in R^*\left( \left(e_t \sharp \eta \right)_{t \in [0,T]} \right) \right\}, \quad \forall \eta \in \PP \left( \Gamma_T^R \right).
%$$
%{\color{red}\textit{ Delete or not.} Theorem \ref{existence_1} establishes that any optimization problem \eqref{optimization problem} satisfying our assumptions admits a minimizer when the distribution of all agents is given by $m(t)= e_t \sharp \pi_1 \sharp P$ for some relaxed MFC equilibria $P$. For any individual player, its trajectory $\gamma$ and strategy $\mu$ must satisfy \eqref{222_1}.
%To the best of our knowledge, this theorem is the first step towards understanding first-order non-convex MFC problems.}
%
\medskip

For any $(x,m) \in \T^d \times \M_r$ and any $u_x: [0,T] \rightarrow \R^n$ such that $\int_0^T \left\vert u_x(t) \right\vert^q dt \leq R$, denote by $\gamma_x^{u_x} : = \gamma_x^{u_x} \left( \cdot ; x, \LL_{[0,T]} \otimes \delta_{u_x(t)}, m \right)$ the solution of the state equation (\ref{traj_1}). Define a probability measure $\eta^u \in \PP \left( \Gamma_T^R \right)$ by
\begin{equation} \label{def_of_eta^u}
	\eta^u(A) : = \int_{\T^d} \delta_{ \left\{\gamma_x^{u_x}\right\} }(A) m_0(dx), \quad \forall A \in \mathcal{B} \left( \Gamma_T^R \right).
\end{equation}
By Lemma \ref{pre1}, $\eta^u$ is the unique measure such that the probability measure $P^u := \eta^u \otimes \delta_{\left\{\LL_{[0,T]} \otimes \delta_{u_{\gamma(0)}(t)} \right\}}$ belongs to the set $\PP_R(m)$.

%\medskip

%definition of strict relaxed MFG equilibrium
\begin{definition}
	Given $m_0 \in \PP \left( \T^d \right)$, we say that $P$ is a {\bf strict relaxed MFC equilibrium} for $m_0$ if it satisfies:
	\begin{itemize}
		\item[($i$)] $P$ is a relaxed MFC equilibrium for $m_0$.
		\item[($ii$)] There exist maps $u : [0,T] \times \T^d  \rightarrow \R^n$ such that $\int_0^T \left\vert u(t, x) \right\vert^q dt \leq R$ and 
		$$
		P = \eta^u \otimes \delta_{ \left\{\LL_{[0,T]} \otimes \delta_{u(t, x)}\right\} },
		$$
		where $\eta^u$ is defined as in (\ref{def_of_eta^u}).
	\end{itemize}
\end{definition}

%\medskip

The second main result is stated as follows and, as a consequence, we immediately obtain that Theorem \ref{existence_1} encompasses the classical existence result for MFC equilibria.
\begin{theorem} \label{relation_with_convex_case}
	Let $m \in \M_r$ and $P \in R^* \left( m \right)$. For any $(t,x) \in [0,T] \times \T^d$, the set
	$$
	\mathcal{L} \left(t,x \right) : = \left\{ \left( \lambda,w, v \right) \in \R^{d+2} \, \middle \vert \,\, \exists\; u \in \R^n, s.t.\, \lambda \geq L \left(x,u,m(t) \right), \, w \geq \vert u \vert^q, \, v = f \left( x,u,m(t) \right) \right\}
	$$
	is convex. Then the following statements hold.
	\begin{enumerate}[(1)]
		\item For any $x \in \T^d$ there exist maps $u^*_x: [0,T] \rightarrow \R^n$ such that the probability measure $P_0 := \eta^{u^*} \otimes \delta_{\left\{ \LL_{[0,T]} \otimes \delta_{u^*_{\gamma(0)}(t)}\right\} } \in R^*(m)$, where $\left\{ u^*_x \right\}_{x \in \T^d}$ is the optimal control for
		$$
		\inf_{u} \int_{\T^d} \int_0^T L \left( \gamma_x^{u}(t), u(t), m(t) \right) dt\, m_0(dx),
		$$
		subject to the state equation
		\begin{equation} \label{thm4.2_need}
			\begin{cases}
				\dot{\gamma}^{u}(t) =f \left( \gamma^{u}(t), u(t), m(t) \right), & t \in [0, T]
				\\
				\gamma^{u}(0) = x, & x \in \T^d
			\end{cases}
		\end{equation}
		where the infimum is taken over all the maps $u_x: [0,T] \rightarrow \R^n$ such that $\int_0^T \left \vert u_x(t) \right\vert^q dt \leq R$ for any $x \in \T^d$.
		\item There exists at least one strict relaxed MFC equilibrium. 
	\end{enumerate}
\end{theorem}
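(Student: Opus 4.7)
The plan is to obtain (1) by a Filippov-type measurable selection that exploits the convexity of $\mathcal{L}(t,x)$ to replace any relaxed optimizer by a strict (Dirac-type) one of the same cost, and then to derive (2) by combining (1) with a fixed-point argument in the spirit of Theorem~\ref{existence_1}.

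For part~(1), I would first establish existence of a classical optimal control $\{u^*_x\}_{x\in\T^d}$ by the direct method: the admissible controls with $\int_0^T|u|^q\,dt\le R$ form a weakly precompact set in $L^q$, the trajectory map $u\mapsto\gamma_x^u$ is weakly continuous thanks to (F\ref{f1_1})--(F\ref{f3_1}), and the cost is weakly lower semicontinuous by Ioffe's semicontinuity theorem, with the convexity of $\mathcal{L}(t,\cdot)$ playing the role of the Cesari--Olech condition. Next comes the core reduction: for any $P\in\PP_R(m)$ and $P$-a.e.\ $(\gamma,\mu)$, the triple
$$
\Big(\int L(\gamma(t),u,m(t))\,\mu_t(du),\; \int |u|^q\,\mu_t(du),\; \int f(\gamma(t),u,m(t))\,\mu_t(du)\Big)
$$
is a $\mu_t$-barycenter of points in $\mathcal{L}(t,\gamma(t))$ and so, by the convexity hypothesis, belongs to $\mathcal{L}(t,\gamma(t))$. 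A Kuratowski--Ryll-Nardzewski measurable selection then furnishes $\tilde u(t,\gamma,\mu)\in\R^n$ satisfying $f(\gamma(t),\tilde u,m(t))=\dot\gamma(t)$, $|\tilde u|^q\le\int|u|^q\,\mu_t(du)$, and $L(\gamma(t),\tilde u,m(t))\le\int L\,\mu_t(du)$. Integrating in $t$ and against $P$, the induced strict plan has classical cost at most $J(m,P)$, which therefore dominates the classical infimum attained by $\{u^*_x\}$. Since the reverse inequality is trivial, the two infima coincide, and by Lemma~\ref{pre1} the measure $P_0:=\eta^{u^*}\otimes\delta_{\{\LL_{[0,T]}\otimes\delta_{u^*_{\gamma(0)}(t)}\}}$ lies in $\PP_R(m)$; equality of costs then places it in $R^*(m)$.

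For part~(2), I would apply Theorem~\ref{existence_1} to obtain a relaxed MFC equilibrium $P^*$ with state distribution $m^*=(e_t\sharp\pi_1\sharp P^*)_{t\in[0,T]}\in\M_r$, then set up a Kakutani--Fan--Glicksberg fixed-point argument for the set-valued map
$$
\Psi:\M_r\rightrightarrows\M_r,\qquad \Psi(m):=\{(e_t\sharp\pi_1\sharp P_0)_{t\in[0,T]} : P_0\in R^*(m)\ \text{of the strict form in part~(1)}\}.
$$
Part~(1) ensures $\Psi(m)\neq\emptyset$ for every $m\in\M_r$; Lemma~\ref{M_r_compactness} shows that $\M_r$ is a compact convex subset of $(\CC([0,T],\PP(\T^d)),\hat d_1)$; upper hemicontinuity of $\Psi$ follows from the compactness of $\Gamma_T^R\times\PP_U^R$, the continuity of the trajectory map in $u$, and the semicontinuity already used in part~(1); finally, convexity of $\Psi(m)$ is obtained by mixing two strict optima at the level of state distributions and reselecting a strict representative through part~(1). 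Kakutani--Fan--Glicksberg then produces $m^\dagger\in\Psi(m^\dagger)$, and any witness $P_0^\dagger$ is a strict relaxed MFC equilibrium.

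The main technical obstacle is the verification of upper hemicontinuity (and convexity) of $\Psi$ in part~(2): given $m_k\to m^\dagger$ in $\M_r$ and strict $P_0^k\in R^*(m_k)$ converging weakly in $\PP(\Gamma_T^R\times\PP_U^R)$, one must extract a strict limit $P_0^\dagger$ that remains optimal for the problem with $m^\dagger$ fixed. The convexity of $\mathcal{L}$ is decisive here, since without it weak limits of strict controls yield only relaxed ones. A secondary delicate point is the joint measurability of the selector $\tilde u(t,\gamma,\mu)$ in part~(1), which requires closedness and upper hemicontinuity of $(t,x)\mapsto\mathcal{L}(t,x)$ and ultimately flows from (L\ref{L1_1}), (L\ref{L2_1}) and (F\ref{f3_1}).
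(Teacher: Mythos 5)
Your part (1) is essentially the paper's argument: push the barycenter of $(L,|u|^q,f)$ under $\mu_t$ into the convex set $\mathcal{L}(t,x)$ and apply a Filippov-type measurable selection to extract a Dirac control of no greater cost, then compare with the classical infimum via Lemma \ref{pre1}. The only structural difference is that the paper first Markovianizes the optimizer (Proposition \ref{3.7_1_1}): it replaces $P\in R^*(m)$ by $\hat P=\pi_1\sharp P\otimes\delta_{\{\LL_{[0,T]}\otimes\hat q_{t,\gamma(t)}\}}$ via a disintegration of the occupation measure, so that the selection is a genuine feedback $\hat\alpha(t,x)$ rather than your $\tilde u(t,\gamma,\mu)$. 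That step is not cosmetic: the strict form demanded by the theorem requires the control to be expressible as $u^*_{\gamma(0)}(t)$, i.e.\ indexed by the initial point, and a selector depending on the whole pair $(\gamma,\mu)$ does not obviously reduce to that form when several trajectory--control pairs share the same initial point. Your separate direct-method existence proof for the classical problem is unnecessary (and leaves the measurability of $x\mapsto u^*_x$ unaddressed); the paper obtains $u^*_x(t)=\hat\alpha(t,\gamma_x^{\hat\alpha}(t))$ directly from the relaxed optimizer.

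Part (2) contains the genuine gap. You set up a second Kakutani--Fan--Glicksberg argument for the map $\Psi(m)=\{(e_t\sharp\pi_1\sharp P_0):P_0\in R^*(m)\ \text{strict}\}$, but you never establish its closed graph, and you yourself identify this as the main obstacle: a narrow limit of strict plans $P_0^k$ is in general only a relaxed plan, and re-strictifying the limit does not by itself show that the limit of the state distributions lies in $\Psi(m^\dagger)$. None of this machinery is needed. The decisive observation, which the paper uses and your write-up misses, is that the strictification in part (1) \emph{preserves the first marginal}: $P_0=\pi_1\sharp P\otimes\delta_{\{\LL_{[0,T]}\otimes\delta_{\hat\alpha(t,\gamma(t))}\}}$, so $\pi_1\sharp P_0=\pi_1\sharp P$ and hence $e_t\sharp\pi_1\sharp P_0=m(t)$ for all $t$. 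Consequently, if $P$ is the relaxed equilibrium furnished by Theorem \ref{existence_1} with $m=(e_t\sharp\pi_1\sharp P)_{t\in[0,T]}$, then $P_0\in R^*(m)=R^*\left((e_t\sharp\pi_1\sharp P_0)_{t\in[0,T]}\right)$, so $P_0$ is already a strict relaxed MFC equilibrium; no second fixed point is required. As written, your part (2) is incomplete, and even if the upper hemicontinuity of $\Psi$ were supplied, the convexity of $\Psi(m)$ would also need the same marginal-preservation fact you did not record.
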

%
%{\color{red} Delete or not. This theorem implies that the relaxation method can also be applied to the classical MFC problem, whose value function is convex in the control.}

\begin{remarks}\em
	In synthesis, according to Theorem \ref{existence_1} and Theorem \ref{relation_with_convex_case}, we have that all the information of the neural network architecture described in \eqref{RESNET1} and \eqref{RESNET2} is captured by the relaxed MFC equilibrium. In particular, it is on the second marginal of a relaxed MFC equilibrium that we can find the parameters of the neural network that minimize the suitable loss function with respect to the analyzed data. 
\end{remarks}

\medskip
%structure of this paper
\noindent{\bf Structure of this paper.} In Section \ref{relaxedMFG}, we establish the existence of relaxed MFC equilibria. In Section \ref{strictMFG}, we demonstrate that these relaxed equilibria possess some suitable structures and that under the classical convexity assumption, the new equilibria coincide with the classical MFC equilibria. Moreover, we introduce some notations in Appendix \ref{assumtions and definitions}, recall well-known results from measure theory which will be used throughout this paper in Appendix \ref{measure_theory} and complete the proof of Proposition \ref{3.7_1_1} in Appendix \ref{Completion of the Proof of Proposition 3.7_1_1}.

\medskip

\section{Existence of relaxed MFC equilibria}\label{relaxedMFG}

\subsection{Proof of the properties of $\PP_R(m)$}
\begin{lemma} \label{bounded_of_dot(gamma)}
There exists a constant $K>0$ such that for any $(x,\mu)\in \T^d \times \PP^R_U$ and any $m \in \CC \left( [0,T], \PP \left( \T^d \right) \right)$, the solution $\gamma$ of the state equation (\ref{traj_1}) satisfies
$$
\left\Vert \dot{\gamma} \right\Vert_{L_{q/p} \left( \LL_{[0,T]} \right) }  := \left(\int_0^T \left\vert \dot{\gamma}(t) \right\vert^{q/p} dt \right)^{p/q} \leq K.
$$
\end{lemma}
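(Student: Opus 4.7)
The plan is to bound $|\dot\gamma(t)|$ pointwise using (F\ref{f1_1}), then integrate in time after applying Jensen's inequality to exchange the exponent $q/p$ with the integration against $\mu_t$. Concretely, since $\mu \in \PP_U^R$ admits the disintegration $\mu = \LL_{[0,T]} \otimes \mu_t$ with each $\mu_t \in \PP(\R^n)$, I would first write
$$
|\dot\gamma(t)| \leq \int_{\R^n} |f(\gamma(t),u,m(t))|\, \mu_t(du) \leq C\left(1 + \int_{\R^n} |u|^p\, \mu_t(du)\right),
$$
using (F\ref{f1_1}) and the fact that $\mu_t$ is a probability measure.

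Next I would raise this to the power $q/p \geq 1$, using the elementary inequality $(1+a)^{q/p} \leq 2^{q/p-1}(1 + a^{q/p})$, to get
$$
|\dot\gamma(t)|^{q/p} \leq C^{q/p}\, 2^{q/p - 1}\left(1 + \left(\int_{\R^n} |u|^p\, \mu_t(du)\right)^{q/p}\right).
$$
Then Jensen's inequality applied to the probability measure $\mu_t$ and the convex function $s \mapsto s^{q/p}$ (here using $q/p \geq 1$, which holds because $q > p \geq 1$) yields
$$
\left(\int_{\R^n} |u|^p\, \mu_t(du)\right)^{q/p} \leq \int_{\R^n} |u|^q\, \mu_t(du).
$$

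Integrating over $[0,T]$ and invoking the defining constraint $\int_0^T\!\!\int_{\R^n} |u|^q\, \mu(dt,du) \leq R$ of $\PP_U^R$ produces
$$
\int_0^T |\dot\gamma(t)|^{q/p}\, dt \leq C^{q/p}\, 2^{q/p-1}(T + R),
$$
and taking the $p/q$-th power gives the claim with $K := C \cdot 2^{1 - p/q}(T + R)^{p/q}$, a constant depending only on $C$, $T$, $R$, $p$, $q$ and in particular independent of $x$, $\mu$, and $m$.

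There is no real obstacle here, since the growth bound (F\ref{f1_1}) has exponent $p$ while the integrability built into $\PP_U^R$ is at the higher exponent $q$, precisely so that Jensen's inequality closes the gap. The only point that requires a moment of care is checking the direction of each inequality, i.e., that $q/p \geq 1$ is used for Jensen and that the $L^1$-in-$u$ bound from (F\ref{f1_1}) is compatible with the $L^{q/p}$-in-$t$ norm one wants to estimate; both are immediate from the hypothesis $q > p \geq 1$.
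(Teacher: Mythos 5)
Your proof is correct and follows essentially the same route as the paper: both rest on the pointwise bound from (F\ref{f1_1}), Jensen's inequality for the convex function $s\mapsto s^{q/p}$ against the probability measure $\mu_t$, the elementary convexity estimate $(1+a)^{q/p}\le 2^{q/p-1}(1+a^{q/p})$, and the defining constraint of $\PP_U^R$, arriving at the same constant $K$. The only (immaterial) difference is the order of operations: the paper pushes the exponent $q/p$ inside the $\mu_t$-integral via Jensen before invoking (F\ref{f1_1}), whereas you apply (F\ref{f1_1}) first and use Jensen afterwards.
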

\begin{proof}
%By H\"older's inequality, for any $\mu \in \PP^R_U$ and any $t \in [0,T]$, we have
%$$
%\int_0^t \int_{\R^n} \left\vert u \right\vert^p \mu (ds,du) \leq \left( \int_0^t \int_{\R^n} \left\vert u \right\vert^q \mu (ds,du) \right)^{\frac{p}{q}}  T^{\frac{1}{r}} \leq R^{\frac{p}{q}} T^{\frac{1}{r}},
%$$
%where the constants $p$, $q$ and $r$ are defined as in Section \ref{assumptions}.
%Define a constant $R^{\prime} : = R^{\frac{p}{q}} T^{\frac{1}{r}}$.
%Since $\T^d$ is compact, there exists a constant $\hat{C}>0$ such that $\sup_{x \in \T^d} \vert x \vert \leq \hat{C}$.

Since $\gamma$ is the solution of the state equation (\ref{traj_1}), we have
\begin{align*}
	\int_0^T \left\vert \dot{\gamma}(t) \right\vert^{\frac{q}{p}} dt
	& = \int_0^T \left\vert \int_{\R^n} f \left( \gamma(t), u , m(t) \right) \mu_t(du) \right\vert ^{\frac{q}{p}} dt \\
	& \leq \int_0^T \int_{\R^n} \left\vert f \left( \gamma(t), u , m(t) \right) \right\vert ^{\frac{q}{p}} \mu_t(du) dt \\
	& \leq \int_0^T \int_{\R^n} C^{\frac{q}{p}} \left( 1+ \vert u \vert ^p \right)^{\frac{q}{p}} \mu_t(du) dt \\
%	& \leq \int_0^T \int_{\R^n} C^{\frac{q}{p}} \left( 1+\hat{C}+ \vert u \vert ^p \right)^{\frac{q}{p}} \mu_t(du) dt \\
	& \leq 2^{\frac{q}{p} -1} T C^{\frac{q}{p}} + 2^{\frac{q}{p} -1} C^{\frac{q}{p}}R.
\end{align*}
Here, the second line holds by Jensen's inequality, the third one holds by (F\ref{f1_1}) and the last one holds by the convexity of $x^{q/p}$.
%Define the function $h:\R^+ \rightarrow \R^+$ by
%$$
%h(x) := x^{q/p}, \quad \forall x \in \R^+.
%$$
%Since the function $h$ is convex, we have
%\begin{equation} \label{123}
%\left( \frac{a+b}{2} \right)^{\frac{q}{p}} \leq \frac{a^{\frac{q}{p}} + b^{\frac{q}{p}}}{2}, \quad \forall a,b \in \R^+.
%\end{equation}
Let the constant $K$ equal to $\left( 2^{\frac{q}{p} -1} T C^{\frac{q}{p}} + 2^{\frac{q}{p} -1} C^{\frac{q}{p}}R \right)^{\frac{p}{q}}$. The proof is complete.
\end{proof}

\begin{remarks} \label{bounded_of_f}
	For any $(t,x,\mu) \in [0,T] \times \T^d \times \PP_{U}^{R}$ and any $m \in \CC \left( [0,T], \PP(\T^d) \right)$, denote by $\gamma$ the related solution of the state equation (\ref{traj_1}). We have
	$$
	\int_0^t \int_{\R^n} \left\vert u \right\vert^p \mu (ds,du) \leq \left( \int_0^t \int_{\R^n} \left\vert u \right\vert^q \mu (ds,du) \right)^{\frac{p}{q}}  T^{\frac{1}{r}} \leq R^{\frac{p}{q}} T^{\frac{1}{r}}
	$$
	%,\quad \int_0^t \int_{\R^n} \left\vert u \right\vert^{p^\prime} \mu (ds,du) \leq R^{\frac{p^\prime}{q}} T^{1-\frac{p^\prime}{q}}
	and
	$$
	\int_0^t \int_{\R^n} \vert f \left( \gamma(s), u , m(s) \right)\vert \mu (ds, d u) \leq CT + CR^{\frac{p}{q}} T^{\frac{1}{r}}.
	$$
\end{remarks}

%\medskip

%compact of M_r
\begin{lemma} \label{M_r_compactness}
	$\M_r$ is a compact subset of $\CC \left( [0,T], \PP \left( \T^d \right) \right)$ with respect to the $\hat{d_1}$-topology, where $\hat{d_1}$ is defined as in Section \ref{main_results}.
\end{lemma}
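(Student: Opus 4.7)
The plan is to apply the Arzel\`a--Ascoli theorem in $\CC([0,T], \PP(\T^d))$ equipped with the uniform metric $\hat{d_1}$. The two things to verify are relative compactness (via equicontinuity plus pointwise precompactness of values) and closedness of $\M_r$ in this ambient space.

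First I would record that $(\PP(\T^d), d_1)$ is a compact metric space, since $\T^d$ is compact and on a compact Polish space the Kantorovich--Rubinstein distance $d_1$ metrizes the weak-$\ast$ topology, for which $\PP(\T^d)$ is compact by Prokhorov's theorem (this is standard and cited in Appendix \ref{measure_theory}). In particular, for every $t \in [0,T]$ the set of values $\{m(t) : m \in \M_r\}$ is relatively compact in $(\PP(\T^d), d_1)$.

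Next I would verify equicontinuity of the family $\M_r$: by the very definition,
$$
d_1(m(t_1), m(t_2)) \leq K |t_1 - t_2|^{1/r}, \qquad \forall\, m \in \M_r,\,\, \forall\, t_1,t_2 \in [0,T],
$$
so the modulus of continuity $K |t_1-t_2|^{1/r}$ is uniform over $\M_r$. Combining this with the pointwise precompactness above, Arzel\`a--Ascoli yields that $\M_r$ is relatively compact in $(\CC([0,T], \PP(\T^d)), \hat{d_1})$.

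It then remains to show that $\M_r$ is closed. Suppose $(m^k)_{k \in \N} \subset \M_r$ converges uniformly to some $m \in \CC([0,T], \PP(\T^d))$, i.e.\ $\hat{d_1}(m^k, m) \to 0$. For any fixed $t_1, t_2 \in [0,T]$, the triangle inequality and pointwise convergence give
$$
d_1(m(t_1), m(t_2)) = \lim_{k \to \infty} d_1(m^k(t_1), m^k(t_2)) \leq K |t_1 - t_2|^{1/r},
$$
while $m(0) = \lim_{k\to\infty} m^k(0) = m_0$. Hence $m \in \M_r$, so $\M_r$ is closed, and therefore compact. The argument is routine; the only subtlety worth double-checking is that $d_1$-convergence is preserved under the limit in the Hölder inequality, which follows from the continuity of $d_1$ with respect to weak-$\ast$ convergence on the compact space $\PP(\T^d)$, so I do not anticipate any genuine obstacle here.
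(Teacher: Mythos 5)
Your proof is correct and follows essentially the same route as the paper's: compactness of $\left( \PP \left( \T^d \right), d_1 \right)$ gives pointwise precompactness, the uniform H\"older bound gives equicontinuity, Arzel\`a--Ascoli gives relative compactness, and closedness completes the argument. The only difference is that you spell out the closedness step, which the paper dismisses as obvious.
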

\begin{proof}
Since $\T^d$ is compact, $\PP \left( \T^d \right)$ is also compact with respect to the $d_1$-topology. Thus, $\M_r$ is uniformly bounded. By the construction of $\M_r$, it is uniformly equi-H\"older-continuous. By Ascoli-Arzel\`a theorem, $\M_r$ is relatively compact. Obviously, $\M_r$ is closed under $\hat{d_1}$-topology. We conclude that $\M_r$ is compact.
%For any sequence $\left\{m^i\right\}_{i=1}^{\infty} \subset \M_r$ that converges to some $m \in \CC \left( [0,T], \PP \left( \T^d \right) \right)$ with respect to the $\hat{d_1}$-topology,
%it is clear that 
%$$
%\lim_{i \rightarrow \infty} d_1 \left( m^i (t), m(t) \right) =0, \quad \forall t \in [0,T].
%$$
%for any $t_1 \neq t_2 \in [0,T]$, we have
%$$
%d_1 \left( m (t_1), m(t_2) \right) = d_1 \left( m^i (t_1), m(t_1) \right)+d_1 \left( m^i (t_1), m^i(t_2) \right)+d_1 \left( m^i (t_2), m(t_2) \right).
%$$
%Let $i$ tend to infinity. We obtain that
%$$
%d_1 \left( m (t_1), m(t_2) \right) \leq K \left\vert t_1 - t_2 \right\vert ^{1/r}.
%$$
%Since $\lim_{i \rightarrow \infty} d_1 \left( m^i (0), m(0) \right) = d_1 \left( m_0, m(0) \right) =0$, it follows that $m(0) = m_0$. Thus, we have $m \in \M_r$.
%Therefore, $\M_r$ is compact with respect to the $\hat{d_1}$-topology.
\end{proof}

%\medskip

%compact of \Gamma_T^R
\begin{lemma}\label{bounded of gamma and gamma^dot_1}
	$\Gamma_T^R$ is a compact subset of $\Gamma_T$ with respect to the uniform norm $\Vert \cdot \Vert _{\infty}$.	
\end{lemma}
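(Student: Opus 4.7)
The plan is to prove relative compactness and closedness separately, so that the conclusion follows from a standard Arzel\`a–Ascoli argument combined with a stability statement for the state equation.

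\textbf{Step 1 (relative compactness).} First I would show that every $\gamma \in \Gamma_T^R$ is $1/r$-H\"older continuous with a H\"older constant independent of $(x,\mu,m)$. Since $p/q + 1/r = 1$, for any $t_1 < t_2$ H\"older's inequality gives
\[
\left| \gamma(t_2) - \gamma(t_1) \right| \leq \int_{t_1}^{t_2} |\dot\gamma(s)| \, ds \leq \|\dot\gamma\|_{L^{q/p}(\LL_{[0,T]})} \, |t_2 - t_1|^{1/r} \leq K \, |t_2 - t_1|^{1/r},
\]
using Lemma \ref{bounded_of_dot(gamma)}. Combined with $\gamma([0,T]) \subset \T^d$, this yields uniform boundedness and uniform equi-continuity, so by Arzel\`a–Ascoli the set $\Gamma_T^R$ is relatively compact in $C([0,T], \T^d)$ with respect to $\|\cdot\|_\infty$.

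\textbf{Step 2 (closedness).} I would take a sequence $(\gamma_n) \subset \Gamma_T^R$ converging uniformly to some $\gamma \in C([0,T], \T^d)$, pick associated triples $(x_n, \mu_n, m_n) \in \T^d \times \PP_U^R \times \M_r$, and use compactness of each factor: compactness of $\T^d$ is trivial, compactness of $\M_r$ is Lemma \ref{M_r_compactness}, and compactness of $\PP_U^R$ (declared in Section \ref{main_results}) holds under the natural weak topology, since the uniform $L^q$ moment bound gives tightness on $[0,T]\times\R^n$ via Markov's inequality and the constraint $\mu_n = \LL_{[0,T]} \otimes (\mu_n)_t$ is preserved under weak limits. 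Passing to a subsequence, I get $x_n \to x$, $\mu_n \rightharpoonup \mu$ and $m_n \to m$ in the respective topologies.

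\textbf{Step 3 (passage to the limit in the integral equation).} The core task is to show
\[
\gamma_n(t) = x_n + \int_0^t \!\!\int_{\R^n} f(\gamma_n(s), u, m_n(s)) \, \mu_n(ds, du) \longrightarrow x + \int_0^t \!\!\int_{\R^n} f(\gamma(s), u, m(s)) \, \mu(ds, du)
\]
for every $t \in [0,T]$. The integrands $(s,u) \mapsto \mathbbm{1}_{[0,t]}(s) f(\gamma_n(s), u, m_n(s))$ converge continuously to $(s,u) \mapsto \mathbbm{1}_{[0,t]}(s) f(\gamma(s), u, m(s))$ off the $\mu$-null set $\{t\}\times\R^n$ thanks to assumptions (F\ref{f2_1})–(F\ref{f3_1}), uniform convergence of $\gamma_n$, and $\hat d_1$-convergence of $m_n$. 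The growth bound (F\ref{f1_1}) together with the uniform moment bound $\int |u|^q \mu_n(dt,du) \leq R$ and $q > p$ yield, via de la Vall\'ee Poussin, uniform integrability of the family $\{C(1+|u|^p)\}$ with respect to $\{\mu_n\}$. Consequently a continuous convergence theorem for weakly converging measures applies and the integrals converge.

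\textbf{Step 4 (conclusion).} The limit $\gamma$ therefore satisfies (\ref{traj_1}) with parameters $(x,\mu,m) \in \T^d \times \PP_U^R \times \M_r$, so $\gamma \in \Gamma_T^R$. Combined with Step 1 this proves compactness.

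\textbf{Main obstacle.} The delicate point is Step 3: weak convergence of $\mu_n$ is not strong enough by itself to integrate $f$, which has $|u|^p$ growth. The argument hinges on upgrading weak convergence to convergence against the unbounded continuous test function by exploiting the strict inequality $q > p$ to obtain uniform integrability. This is precisely where the choice of $\PP_U^R$ (moment bound in $q$) and the growth (F\ref{f1_1}) (only $p$) interact.
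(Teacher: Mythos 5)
Your proof is correct and follows the paper's skeleton for the first half: relative compactness via the uniform $1/r$-H\"older bound from Lemma \ref{bounded_of_dot(gamma)} plus Arzel\`a--Ascoli, then closedness by extracting convergent subsequences $(x_n,\mu_n,m_n)\to(x,\mu,m)$ from the compact factors. Where you diverge is in how closedness is finished. The paper \emph{defines} a new curve $\gamma$ as the solution of (\ref{traj_1}) with the limit parameters $(x,\mu,m)$ and then runs a Gronwall argument (splitting the difference into an initial-condition term, a Lipschitz term controlled by (F\ref{f2_1})--(F\ref{f3_1}), and a measure-convergence term handled by Proposition \ref{D-L A.1}) to show $\gamma_n\to\gamma$ uniformly, whence the uniform limit $\tilde\gamma$ must equal $\gamma$. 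You instead take $\gamma$ to be the uniform limit itself and pass to the limit directly in the integral identity $\gamma_n(t)=x_n+\int_0^t\int_{\R^n}f(\gamma_n,u,m_n)\,\mu_n(ds,du)$, using continuous convergence of the integrands off the $\mu$-null slice $\{t\}\times\R^n$ together with uniform integrability of $|u|^p$ coming from the $q$-moment bound and $q>p$ (which is exactly the content of Proposition \ref{D-L A.1}(2)--(3), the same tool the paper uses for its term $I_3$). Your route is slightly more economical --- it avoids invoking existence of a solution for the limit parameters and the Gronwall step --- at the cost of leaning on a convergence theorem for varying integrands against weakly converging measures; the paper's route isolates the measure-convergence difficulty to a \emph{fixed} continuous integrand, which is marginally more elementary. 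One point you rightly flag but should not leave implicit in a full write-up: that the limit $\mu$ again lies in $\PP_U^R$ (Lebesgue first marginal and the $q$-moment bound are both preserved under narrow limits, the latter by lower semicontinuity); the paper sweeps this into the asserted compactness of $\PP_U^R$, so you are on equal footing there.
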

\begin{proof}
Since $\T^d$ is compact, $\Gamma_T^R$ is uniformly bounded. 
By Lemma \ref{bounded_of_dot(gamma)}, for any $s \leq t \in [0,T]$, we have
\begin{align*}
	\left\vert \gamma(t)-\gamma(s) \right\vert 
	& = \left\vert \int_s^t \dot{\gamma}(\tau) d \tau \right\vert \leq \int_s^t \left\vert \dot{\gamma}(\tau) \right\vert d \tau \\
	& \leq \left( \int_s^t \left\vert \dot{\gamma}(\tau) \right\vert^{\frac{q}{p}} d \tau \right)^{\frac{p}{q}} \cdot \left( t-s \right) ^{\frac{1}{r}} \leq K \cdot \left( t-s \right) ^{\frac{1}{r}},
	\end{align*}
implying that $\Gamma_T^R$ is uniformly equi-H\"older-continuous.
Thus, $\Gamma_T^R$ is relatively compact by Ascoli-Arzel\`a theorem.

Consider a sequence $\left\{\gamma_i\right\}_{i=1}^{\infty} \subset \Gamma_T^R$ converging to some $\tilde{\gamma}$ in the uniform norm $\left\Vert \cdot \right\Vert_{\infty}$.
There exist sequences $\left\{x_i\right\}_{i=1}^{\infty} \subset \T^d$, $\left\{m^i\right\}_{i=1}^{\infty} \subset \M_r$ and $\left\{\mu^i\right\}_{i=1}^{\infty} \subset \PP_U^R$ such that
$$
\gamma_i(t) = x_i+ \int_0^t \int_{\R^n} f \left( \gamma_i(s), u ,m^i(s) \right) \mu^i_s(du)ds, \quad \forall t \in [0,T].
$$
Since the sets $\T^d$, $\M_r$ and $\PP_U^R$ are compact, there exist $x \in \T^d$, $m \in \M_r$ and $\mu \in \PP^R_U$ such that the sequences $\left\{x_i\right\}_{i=1}^{\infty}$, $\left\{m^i\right\}_{i=1}^{\infty}$ and $\left\{\mu^i\right\}_{i=1}^{\infty}$ converge to $x$, $m$ and $\mu$ respectively.
Define $\gamma(t): = x + \int_0^t \int_{\R^n} f \left( \gamma(s), u, m(s) \right) \mu_s(du)ds$. 
We claim that $\gamma = \tilde{\gamma}$. Indeed,
\begin{align*}
\left\vert \gamma_i(t) - \gamma(t) \right\vert
\leq & \left\vert x_i - x \right\vert + \int_0^t \int_{\R^n} \left\vert f \left( \gamma_i(s), u, m^i(s) \right) - f \left(\gamma(s), u, m(s) \right) \right\vert \mu^i_s(du)ds \\
& + \int_0^t \int_{\R^n} \left\vert f \left(\gamma(s), u, m(s) \right) \right\vert (\mu^i_s - \mu_s) (du)ds \\
\triangleq & I_1+I_2+I_3.
\end{align*}
First of all, $\lim_{i \rightarrow \infty} I_1 =0$ obviously. 
By Proposition \ref{D-L A.1}, we have $\lim_{i \rightarrow \infty} I_3 =0$. By (F\ref{f2_1}) and (F\ref{f3_1}),
\begin{align*}
I_2
\leq & \int_0^t \int_{\R^n} \left\vert f \left( \gamma_i(s), u, m^i(s) \right) - f \left(\gamma(s), u, m^i(s) \right) \right\vert \mu^i_s(du)ds \\
& + \int_0^t \int_{\R^n} \left\vert f \left( \gamma(s), u, m^i(s) \right) - f \left(\gamma(s), u, m(s) \right) \right\vert \mu^i_s(du)ds \\
%\leq & \int_0^t \Lip_x(f) \left\vert \gamma_i(s) - \gamma(s) \right\vert  + \Lip_{\nu}(f)\, d_1 \left( m^i(s) , m(s) \right) ds \\
\leq & \Lip_x(f) \int_0^t \left\vert \gamma_i(s) - \gamma(s) \right\vert ds + \Lip_{\nu}(f) T \hat{d}_1 \left( m^i , m \right).
\end{align*}
Since $\hat{d}_1 \left(m^i,m \right) \rightarrow 0$ as $i \rightarrow \infty$, there exists a sequence of constants $\left\{c_i\right\}_{i=1}^{\infty}$ such that $\lim_{i \rightarrow \infty} c_i =0$ and 
$$
I_1+ I_3 + \Lip_{\nu}(f) T \hat{d}_1 \left( m^i , m \right) \leq c_i.
$$
By Gronwall's inequality, 
$$
\left\vert \gamma_i(t) - \gamma(t) \right\vert \leq c_i e^{\Lip_x(f) t} \leq c_i e^{\Lip_x(f) T}.
$$
In conclusion, we have $\lim_{i \rightarrow \infty} \left\Vert \gamma_i - \gamma \right\Vert_{\infty} = 0$.
Furthermore, for any $i \in \N$, we obtain that
$$
\left\Vert \gamma - \tilde{\gamma} \right\Vert_{\infty} \leq \left\Vert \gamma - \gamma_i \right\Vert_{\infty} +\left\Vert \gamma_i - \tilde{\gamma} \right\Vert_{\infty}.
$$
Let $i$ tend to infinity. 
The limit $\tilde{\gamma}$ of the sequence $\left\{\gamma_i\right\}_{i=1}^{\infty}$ is actually the curve $\gamma$ we defined above. Therefore, $\Gamma_T^R$ is compact.
\end{proof}

%\medskip

%some properties of P_R(m)
\begin{proposition} \label{properties_PR_1}
	For each $m \in \M_r$, the set $\PP_R(m)$ has the following properties.
	\begin{enumerate}[(1)]
		\item $\PP_R(m)$ is compact with respect to the narrowly convergence.
%		\item $\PP_R(m)$ is compact with respect to the $d_1$-topology.
		\item $\PP_R(m)$ is non-empty.
		\item For any $P \in \PP \left( \Gamma_T^R \times \PP_{U}^{R} \right)$, the probability measure $P$ satisfies the condition (\ref{new_con_1}) if and only if for each state-measure pair $\left( \gamma, \mu \right) \in \supp(P)$, the pair satisfies 
		\begin{equation} \label{222}
			\dot{\gamma}(t) = \int_{\R^n} f \left( \gamma(t), u, m(t) \right) \mu_t(du).
		\end{equation}
	\end{enumerate}
\end{proposition}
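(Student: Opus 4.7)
The plan is to handle the three claims in a single framework built on one key technical fact: that the functional
$$
\Phi_{t,v}(\gamma,\mu) \;:=\; \Bigl\langle v,\; \gamma(t) - \gamma(0) - \int_0^t\!\!\int_{\R^n} f(\gamma(s),u,m(s))\,\mu(ds,du)\Bigr\rangle
$$
is continuous on the compact space $\Gamma_T^R\times\PP_U^R$ for every fixed $(t,v)\in[0,T]\times\R^d$. Once this is in hand, (3) becomes a soft argument about supports, (2) reduces to solving a single ODE, and (1) follows from the closedness of two natural sets inside the compact space $\PP(\Gamma_T^R\times\PP_U^R)$.

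First I would prove (3). For the implication from pointwise state equation to the integral identity, note that if every $(\gamma,\mu)\in\supp(P)$ satisfies $\dot\gamma(t)=\int f(\gamma(t),u,m(t))\mu_t(du)$, then integration in $t$ yields $\Phi_{t,v}\equiv 0$ on $\supp(P)$, so $\int_N\Phi_{t,v}\,dP=0$ for every Borel $N$. For the converse, fix $(t,v)$ and assume there exists $(\gamma_0,\mu_0)\in\supp(P)$ with $\Phi_{t,v}(\gamma_0,\mu_0)>0$ (the negative case is symmetric). Continuity of $\Phi_{t,v}$ gives an open neighborhood $N_0$ of $(\gamma_0,\mu_0)$ on which $\Phi_{t,v}>\Phi_{t,v}(\gamma_0,\mu_0)/2$, and $P(N_0)>0$ since $(\gamma_0,\mu_0)\in\supp(P)$, yielding $\int_{N_0}\Phi_{t,v}\,dP>0$, a contradiction. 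Hence $\Phi_{t,v}=0$ on $\supp(P)$ for each fixed $(t,v)$. Choosing countable dense sets of times and of directions $v\in\R^d$, and using that both sides of the integrated state equation are absolutely continuous in $t$, gives $\gamma(t)=\gamma(0)+\int_0^t\int f(\gamma(s),u,m(s))\mu_s(du)\,ds$ for all $t$; differentiating recovers the ODE almost everywhere.

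For (2), I would take any fixed $\mu^\ast\in\PP_U^R$ (for instance $\mu^\ast=\LL_{[0,T]}\otimes\delta_0$, which satisfies $\int|u|^q\,\mu^\ast=0\le R$), and for each $x\in\T^d$ solve the Cauchy problem $\dot\gamma_x(t)=\int f(\gamma_x(t),u,m(t))\,\mu^\ast_t(du)$ with $\gamma_x(0)=x$. The Lipschitz bound (F\ref{f2_1}) in $x$ gives existence, uniqueness, and continuous (hence Borel-measurable) dependence $x\mapsto\gamma_x\in\Gamma_T^R$. The probability measure $P:=\int_{\T^d}\delta_{(\gamma_x,\mu^\ast)}\,m_0(dx)$ lies in $\PP(\Gamma_T^R\times\PP_U^R)$, satisfies $e_0\sharp(\pi_1\sharp P)=m_0$, and has $\supp(P)$ consisting of pairs satisfying the state equation, so by (3) we conclude $P\in\PP_R(m)$.

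For (1), using that $\Gamma_T^R$ (Lemma \ref{bounded of gamma and gamma^dot_1}) and $\PP_U^R$ are compact, the space $\PP(\Gamma_T^R\times\PP_U^R)$ is compact in the narrow topology. Writing $C_m:=\{(\gamma,\mu):\Phi_{t,v}(\gamma,\mu)=0\ \forall(t,v)\}$, continuity of each $\Phi_{t,v}$ makes $C_m$ closed, and by (3) one has $\PP_R(m)=\{P:e_0\sharp(\pi_1\sharp P)=m_0\}\cap\{P:\supp(P)\subset C_m\}$. The first set is closed because $P\mapsto e_0\sharp(\pi_1\sharp P)$ is continuous for narrow convergence. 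For the second, note that $\{P:\supp(P)\subset C_m\}=\{P:P(C_m^c)=0\}$, and since $C_m^c$ is open, the portmanteau theorem gives that $P\mapsto P(C_m^c)$ is lower semicontinuous, so the set is closed. Combined with (2) for non-emptiness, this finishes the compactness claim.

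The main obstacle is establishing continuity of $\Phi_{t,v}$, since $f$ has growth $|u|^p$ and $\mathbf{1}_{[0,t]}$ is not continuous. I would handle it in two steps: first replace $f(\gamma^n(s),\cdot,\cdot)$ by $f(\gamma(s),\cdot,\cdot)$ using the uniform Lipschitz bound (F\ref{f2_1}) together with uniform convergence $\gamma^n\to\gamma$, picking up an error $\le\Lip_x(f)\cdot T\cdot\|\gamma^n-\gamma\|_\infty\to 0$; then pass to the limit $\int g\,d\mu^n\to\int g\,d\mu$ for $g(s,u)=\mathbf{1}_{[0,t]}(s)f(\gamma(s),u,m(s))$ using that (i) $|g|\le C(1+|u|^p)$ is uniformly integrable with respect to $\{\mu^n\}$ because $\sup_n\int|u|^q\,d\mu^n\le R$ and $q>p$, and (ii) the discontinuity set $\{t\}\times\R^n$ of $\mathbf{1}_{[0,t]}$ has $\mu$-measure zero since $\mu\in\PP_U$ has time-marginal equal to $\LL_{[0,T]}$, so the portmanteau theorem applies.
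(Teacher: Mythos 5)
Your proposal is correct and follows essentially the same route as the paper: the key lemma in both is the boundedness and continuity of the functional $\kappa=\Phi_{t,v}$ on the compact space $\Gamma_T^R\times\PP_U^R$, part (2) uses the identical $\LL_{[0,T]}\otimes\delta_{\{0\}}$ construction, and part (3) is the same support-plus-continuity contradiction argument. The only cosmetic difference is in (1), where you obtain closedness by routing through (3) and the portmanteau lower semicontinuity of $P\mapsto P(C_m^c)$, whereas the paper verifies the defining integral identity for the narrow limit directly via the decomposition of $N$ into the open sets $N^+$ and $N^-$; both work, and your explicit handling of the discontinuity of $\mathbf{1}_{[0,t]}$ through the Lebesgue time-marginal is, if anything, slightly more careful than the paper's bare appeal to Proposition \ref{D-L A.1}.
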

\begin{proof}
(1) Since $\Gamma_T^R$ and $\PP_{U}^{R}$ are compact, the set $\PP \left( \Gamma_T^R \times \PP_{U}^{R} \right)$ is also compact with respect to the narrowly convergence.
Let $\left\{ P_i \right\}_{i =1}^{\infty} \subset \PP_R(m)$ such that $P_i$ converges to some $P \in \PP \left( \Gamma_T^R \times \PP_{U}^{R} \right)$ narrowly. 
%For any function $g \in \CC_b \left( \T^d \right)$, we have 
%\begin{align*}
%	\int_{\T^d} g(x) m_0(dx)
%	& = \lim_{i \rightarrow \infty} \int_{\T^d} g(x) e_0 \sharp \left( \pi_1 \sharp P_i \right) (dx) \\
%	& = \lim_{i \rightarrow \infty}\int_{\Gamma_T^R \times \PP_{U}^{R}} g \left( \gamma(0) \right) P_i \left( d\gamma, d\mu \right) \\
%	& = \int_{\Gamma_T^R \times \PP_{U}^{R}} g \left( \gamma(0) \right) P \left( d\gamma, d\mu \right) \\ 
%	& = \int_{\T^d} g(x) e_0 \sharp \left( \pi_1 \sharp P \right) (dx),
%\end{align*}
%which indicates that $e_0 \sharp \left( \pi_1 \sharp P \right) = m_0$.
Then it is clear that $e_0 \sharp \left( \pi_1 \sharp P \right) = m_0$.

For each $(t,v) \in [0,T] \times \R^d$, define the function $\kappa: \Gamma_T^R \times \PP_{U}^{R} \rightarrow \R$ by
\begin{equation} \label{equation in property P_R(m)}
\left(\gamma, \mu \right) \mapsto \left \langle v, \gamma(t) - \gamma(0) - \int_0^t \int_{\R^n} f \left( \gamma(s), u,m(s) \right) \mu \left( ds,du \right) \right \rangle.
\end{equation}
We claim that the function $\kappa$ is bounded and continuous.
It is obviously that the function 
\begin{equation*}\label{function1}
\left( \gamma, \mu \right) \mapsto \langle v, \gamma(t)- \gamma(0) \rangle
\end{equation*}
is bounded and continuous. Additionally, by Remark \ref{bounded_of_f}, the function
\begin{equation} \label{function2}
\left( \gamma, \mu \right) \mapsto \left\langle v, \int_0^t \int_{\R^n} f \left( \gamma(s) , u,m(s) \right) \mu (ds, du) \right\rangle
\end{equation}
is bounded as well.
On the other hand, let $\gamma_i \rightarrow \gamma$ in $\Gamma_T^R$ and $\mu^i \rightarrow \mu$ in $\PP_{U}^{R}$.
%It is obviously that the function (\ref{function1}) is continuous.
Consider
\begin{align*}
A_1 : = &\left\vert \int_0^t \int_{\R^n} f \left( \gamma_i(s) , u ,m(s) \right) \mu^i (ds,du) - \int_0^t \int_{\R^n} f \left( \gamma(s) , u ,m(s) \right) \mu (ds,du) \right\vert \\
\leq & \int_0^t \int_{\R^n} \left\vert f \left( \gamma_i(s) , u ,m(s)\right) -   f \left( \gamma(s) , u,m(s) \right)\right \vert \mu^i (du,ds) \\
& + \left\vert \int_0^t  \int_{\R^n} f \left( \gamma(s) , u ,m(s) \right) \mu^i (ds,du) -  \int_0^t \int_{\R^n} f \left( \gamma(s) , u ,m(s) \right) \mu (ds,du) \right \vert \\
\triangleq & A_2 + A_3.
\end{align*}
By (F\ref{f2_1}), 
$$
\lim_{i \rightarrow \infty} A_2 \leq \lim_{i \rightarrow \infty}\int_0^t \int_{\R^n} \Lip_x(f) \left\Vert \gamma_i - \gamma \right\Vert _{\infty} \mu^i (du,ds) \leq \lim_{i \rightarrow \infty} \Lip_x(f) T \left\Vert \gamma_i - \gamma \right\Vert _{\infty} = 0 .
$$
By (F\ref{f1_1}) and Proposition \ref{D-L A.1}, $\lim_{i \rightarrow \infty} A_3 = 0$. We conclude that the function $\kappa$ is both bounded and continuous.

For any open set $N \subset \Gamma_T^R \times \PP_{U}^{R}$, we have
\begin{align*}
& \left\vert \int_{N} \left\langle v, \gamma(t) - \gamma(0) - \int_0^t \int_{\R^n} f \left( \gamma(s), u,m(s) \right) \mu \left( ds,du \right) \right\rangle P \left( d\gamma, d \mu \right) \right\vert \\
%\leq & \int_{\Gamma_T^R \times \PP_{U}^{R}} \mathbf{1}_N \left( \gamma, \mu \right) \left\vert \left \langle v, \gamma(t) - \gamma(0) - \int_0^t \int_{\R^n} f \left( \gamma(s), u,m(s) \right) \mu \left( ds,du \right) \right \rangle \right\vert P \left( d\gamma, d \mu \right) \\
\leq & \liminf_{i \rightarrow \infty} \int_{\Gamma_T^R \times \PP_{U}^{R}} \mathbf{1}_N \left( \gamma, \mu \right) \left\vert \left \langle v, \gamma(t) - \gamma(0) - \int_0^t \int_{\R^n} f \left( \gamma(s), u,m(s) \right) \mu \left( ds,du \right) \right \rangle \right\vert P_i \left( d\gamma, d \mu \right) \\
= & \liminf_{i \rightarrow \infty} \int_{N^+} \left \langle v, \gamma(t) - \gamma(0) - \int_0^t \int_{\R^n} f \left( \gamma(s), u,m(s) \right) \mu \left( ds,du \right) \right \rangle P_i \left( d\gamma, d \mu \right) \\
& - \int_{N^-} \left \langle v, \gamma(t) - \gamma(0) - \int_0^t \int_{\R^n} f \left( \gamma(s), u,m(s) \right) \mu \left( ds,du \right) \right \rangle P_i \left( d\gamma, d \mu \right) =0,
\end{align*}
where the sets $N^+$ and $N^-$ are defined by
$$
N^+ := \left\{ \left( \gamma, \mu \right) \in N \, \middle\vert \,\, \kappa \left( \gamma, \mu \right) > 0 \right\},
$$
$$
N^- := \left\{ \left( \gamma, \mu \right) \in N \, \middle\vert \,\, \kappa \left( \gamma, \mu \right) < 0 \right\}.
$$
Since the function $\kappa$ is continuous, both $N^+$ and $N^-$ are open sets.
In conclusion, $P \in \PP_R(m)$, and $\PP_R(m)$ is closed. Furthermore, since $\PP_R(m) \subset \PP \left( \Gamma_T^R \times \PP_{U}^{R} \right)$ and $\PP \left( \Gamma_T^R \times \PP_{U}^{R} \right)$ is compact, the set $\PP_R(m)$ is also compact with respect to the narrowly convergence.

%\medskip

%(2) This point is a direct consequence of the point (1).

\medskip

(2) Let $\mu_0 = \LL_{[0,T]} \otimes \delta_{\{0\}} \in \PP^R_U$. The state equation (\ref{traj_1}) is transformed into an autonomous ordinary differential equation
\begin{equation} \label{need in (3)}
	\begin{cases}
		\dot\gamma(t) =  f(\gamma(t), 0 , m(t)) ,
		\\
		\gamma(0) = x.
	\end{cases}
\end{equation}
For any $x \in \T^d$, there exists a unique solution $\gamma_x (t) = \gamma_x (t; x, \mu_0, m)$ of (\ref{need in (3)}) by (F\ref{f2_1}). Define the map $p: \T^d \rightarrow \Gamma_T^R$ by 
$$
x \mapsto p(x) = \gamma_x.
$$ 
Define $P := p \sharp m_0  \otimes \delta_{\{\mu_0\}} \in \PP \left( \Gamma_T^R \times \PP_{U}^{R} \right)$.
%For any function $g \in \CC_b \left( \T^d \right)$, we have
%\begin{align*}
%	\int_{\T^d} g(x) e_0 \sharp \left( \pi_1 \sharp P \right) (dx) 
%	& = \int_{\Gamma_T^R} g \left( \gamma(0) \right) \pi_1 \sharp P \left(d \gamma \right) \\
%	& = \int_{\Gamma_T^R} g \left( \gamma(0) \right) p \sharp m_0 \left( d \gamma \right) \\
%	& = \int_{\T^d} g \left( p(x)(0) \right) m_0 \left( d x \right) \\
%	& = \int_{\T^d} g(x) m_0 \left(d x \right),
%\end{align*}
%which indicates that $e_0 \sharp \left( \pi_1 \sharp P \right) = m_0$.
It is clear that $P \in \PP_R(m)$.

%For any $(t,v) \in [0,T] \times \R^d$ and any open set $N \subset \Gamma_T^R \times \PP_{U}^{R}$,
%\begin{align*}
%	& \int_{N} \left \langle v, \gamma(t) - \gamma(0) - \int_0^t \int_{\R^n} f \left( \gamma(s), u,m(s) \right) \mu \left( ds,du \right) \right \rangle P \left( d\gamma, d \mu \right) \\
%	= & \int_{\pi_1(N)} \left \langle v, \gamma(t) - \gamma(0) - \int_0^t f \left( \gamma(s), 0,m(s) \right) ds \right \rangle \, p \sharp m_0 \left( d \gamma \right) \\
%	= & \int_{p^{-1} \left(\pi_1(N) \right)} \left \langle v, p(x)(t) - p(x)(0) - \int_0^t f \left(p(x)(s), 0 ,m(s)\right) ds \right \rangle\, m_0 (dx) \\
%	= & \int_{p^{-1} \left(\pi_1(N) \right)} \left \langle v, p(x)(t) - p(x)(0) - \int_0^t \frac{\partial p(x)(s)}{\partial s} ds \right \rangle\, m_0(dx) = 0,
%\end{align*}
%where the set $p^{-1} \left(\pi_1(N) \right)$ is defined by
%$$
%p^{-1} \left(\pi_1(N) \right): = \left\{ x\in \T^d \, \middle\vert \,\,p(x) \in \pi_1(N) \right\}.
%$$
%Above all, we have $P \in \PP_R(m)$. Therefore, $\PP_R(m)$ is non-empty.

\medskip

(3) Let $P$ be the probability measure that satisfies the condition (\ref{new_con_1}). Suppose there exist $\left( \gamma_0, \mu^0 \right) \in \supp (P)$ and $t_0 \in [0,T]$ such that
$$
\gamma_0 \left( t_0 \right) - \gamma_0(0) \neq \int_0^{t_0} \int_{\R^n} f \left( \gamma_0(s), u,m(s) \right) \mu^0 \left( ds,du \right).
$$
Without loss of generality, assume there exists $v_0 \in \R^d$ such that
$$
\left\langle v_0, \gamma_0 \left( t_0 \right) - \gamma_0(0) - \int_0^{t_0} \int_{\R^n} f \left( \gamma_0(s), u,m(s) \right) \mu^0 \left( ds,du \right) \right\rangle >0.
$$
Similar to the proof of the continuity of $\kappa$, the map
$$
\left( \gamma, \mu \right) \mapsto \left\langle v_0, \gamma \left( t_0 \right) - \gamma(0) - \int_0^{t_0} \int_{\R^n} f \left( \gamma(s), u ,m(s) \right) \mu \left( ds,du \right) \right\rangle
$$
is also continuous.
Hence, there exists an open neighborhood $N$ of the point $\left( \gamma_0, \mu^0 \right)$ such that $P (N) > 0$ and
$$
\int_N \left\langle v_0, \gamma \left( t_0 \right) - \gamma(0) - \int_0^{t_0} \int_{\R^n} f \left( \gamma(s), u,m(s) \right) \mu \left( ds,du \right) \right\rangle P \left( d\gamma, d\mu \right) >0,
$$
which contradicts (\ref{new_con_1}).
Therefore, we have 
$$
\gamma \left( t \right) - \gamma(0) = \int_0^{t} \int_{\R^n} f \left( \gamma (s), u,m(s) \right) \mu \left( ds,du \right), \quad \forall \left( \gamma, \mu \right) \in \supp(P), \forall t \in [0,T].
$$
Taking the derivative with respect to $t$ on both sides gives us (\ref{222}).

The other point can be easily proved by definition.
\end{proof}

%\medskip
%\medskip

%%%%%%%%%%%%%%%%%%%%%%%%%%%%%%%
%Section 3.2
\subsection{Proof of the existence of relaxed MFC equilibria} \label{existence_relaxed_MFG_Solution}
In this section, we prove the existence of relaxed MFC equilibria by Kakutani's theorem.

\medskip

%Define set-valued map
Define a subset $\PP_0 \left( \Gamma_T^R \right) \subset \PP\left( \Gamma_T^R \right)$ by
$$
\PP_0 \left( \Gamma_T^R \right) := \left\{ \eta \in \PP\left( \Gamma_T^R \right)\, \middle\vert \,\, e_0 \sharp \eta = m_0 \right\}.
$$
Obviously, $\PP_0 \left( \Gamma_T^R \right)$ is compact with respect to the $d_1$-topology.
Define the set-valued map by
$$
E: \left( \PP_0 \left( \Gamma_T^R \right), d_1 \right) \rightrightarrows \left( \PP_0 \left( \Gamma_T^R \right), d_1 \right), \quad \eta \mapsto E(\eta),
$$
where
$$
E(\eta) := \left\{ \pi_1 \sharp P \, \middle\vert \,\, P \in R^*\left( \left(e_t \sharp \eta \right)_{t \in [0,T]} \right) \right\}.
$$

%well-defined of relaxed MFG equilibrium and set-valued map
\begin{remarks} \label{well-defined}
	Let $\eta \in \PP_0 \left( \Gamma_T^R \right)$. Define the map $m:[0,T] \rightarrow \PP \left( \T^d \right)$ by $m(t) = e_t \sharp \eta$ for any $t \in [0,T]$. It is clear that $m \in \M_r$.
	More precisely, it is clear that $m(0) = e_0 \sharp \eta = m_0$.
	For any $t_1 \neq t_2 \in [0,T]$ and any 1-Lipschitz function $g: \T^d \rightarrow \R$, we obtain that
	\begin{align*} 
		&  \left\vert \int_{\T^d} g(x) e_{t_1} \sharp \eta(dx) - \int_{\T^d} g(x) e_{t_2} \sharp \eta(dx) \right\vert \\
		\leq & \int_{\Gamma_T^R} \left\vert g\left( \gamma(t_1) \right) - g\left( \gamma(t_2) \right) \right\vert \eta \left( d\gamma \right) \\
		\leq & \int_{\Gamma_T^R} \left\vert  \gamma(t_1) - \gamma(t_2) \right\vert \eta \left( d\gamma \right)
		\leq K \left\vert t_1 - t_2 \right\vert ^{\frac{1}{r}},
	\end{align*}
	where the last inequality comes from Lemma \ref{bounded_of_dot(gamma)}.
	By the arbitrariness of the function $g$, we have
	$$
	d_1 \left( m(t_1), m(t_2) \right) \leq K \left\vert t_1 - t_2 \right\vert ^{\frac{1}{r}}.
	$$
	Thus, $m \in \M_r$. Therefore, the set-valued map $E$ is well-defined.
\end{remarks}

%\medskip

%J^m(\gamma, \mu) is continuous
\begin{lemma} \label{Jm_cont_1}
	For each $m \in \M_r$,
	\begin{enumerate}[(1)]
		\item the family of functions $\left\{ J^m \left( \cdot, \mu \right): \Gamma_T^R \rightarrow \R\, \middle\vert \,\, \mu \in \PP_U^R \right\}$ is uniformly equi-continuous.
		\item Let $\gamma \in \Gamma_T^R$. If $\mu^i$ converges to $\mu$ narrowly, then
		\begin{equation}\label{J_m lower semi-cont}
			\liminf_{i \rightarrow \infty} J^m \left( \gamma, \mu^i \right) \geq J^m \left( \gamma, \mu \right).
		\end{equation}
	\end{enumerate}
\end{lemma}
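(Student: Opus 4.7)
The plan is to prove the two assertions separately, leveraging (L3)--(L4) together with the $q$-moment cap built into $\PP_U^R$.

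For part~(1), the mean value theorem combined with the gradient bound in (L\ref{L4_1}) yields the pointwise estimate $|L(x_1,u,\nu)-L(x_2,u,\nu)| \leq C_2(1+|u|^q)|x_1-x_2|$ for all $(u,\nu)$. Integrating this against any $\mu \in \PP_U^R$ gives
$$
|J^m(\gamma_1,\mu)-J^m(\gamma_2,\mu)| \leq C_2 \|\gamma_1-\gamma_2\|_\infty \int_0^T\!\!\int_{\R^n}(1+|u|^q)\,\mu(dt,du) \leq C_2(T+R)\|\gamma_1-\gamma_2\|_\infty,
$$
where the last step uses the very definition of $\PP_U^R$. Since the Lipschitz constant $C_2(T+R)$ depends only on the data, the family $\{J^m(\cdot,\mu)\}_{\mu \in \PP_U^R}$ is uniformly Lipschitz, hence uniformly equi-continuous.

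For part~(2), the strategy is to reduce to Portmanteau's theorem for non-negative lower semi-continuous integrands. Introduce the shifted integrand
$$
G(t,u) := L(\gamma(t),u,m(t)) + C_1(1+|u|^q).
$$
By (L\ref{L1_1}), (L\ref{L2_1}) and continuity of $\gamma$ and $m$, $G$ is continuous on $[0,T] \times \R^n$; by (L\ref{L3_1}), $G \geq 0$. Portmanteau applied to the narrowly convergent sequence $\mu^i \to \mu$ then yields $\liminf_i \int G\,d\mu^i \geq \int G\,d\mu$. Rewriting $J^m(\gamma,\mu^i) = \int G\,d\mu^i - C_1 T - C_1 \int |u|^q\,d\mu^i$ and passing to the $\liminf$ produces
$$
\liminf_i J^m(\gamma,\mu^i) \geq \int G\,d\mu - C_1 T - C_1 \limsup_i \int |u|^q\,d\mu^i.
$$

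The main obstacle is controlling $\limsup_i \int |u|^q\,d\mu^i$: narrow convergence on its own gives only the lower semi-continuous estimate $\liminf_i \int |u|^q\,d\mu^i \geq \int |u|^q\,d\mu$, which is the wrong direction to cancel the correction. I would handle this by a truncation argument. Pick a continuous cutoff $\phi_N:\R^n \to [0,1]$ with $\phi_N \equiv 1$ on $\{|u|\leq N\}$ and $\supp \phi_N \subset \{|u|\leq N+1\}$; then $L(\gamma(\cdot),\cdot,m(\cdot))\phi_N$ is bounded continuous, so standard Portmanteau gives equality of integrals in the limit on the truncated piece. The tails are estimated using (L\ref{L3_1}) by $|L|\cdot(1-\phi_N) \leq C_1(1+|u|^q)\mathbf{1}_{|u|>N}$, controlled by the uniform $q$-moment bound $\int (1+|u|^q)\mu^i(dt,du) \leq T+R$. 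Letting $N \to \infty$ should then deliver the required lower semi-continuity; the delicate point, and the step I expect to demand the most care, is squeezing the uniform tail estimate out of the plain $q$-moment bound and matching it against the corresponding tail on the limit $\mu$.
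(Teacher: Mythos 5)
Your part (1) is correct and is essentially the paper's own argument: the gradient bound (L\ref{L4_1}), integrated against any $\mu\in\PP_U^R$, yields the uniform Lipschitz constant $C_2(T+R)$ exactly as in the paper, so the family is uniformly equi-Lipschitz.

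Part (2) has a genuine gap, and it sits precisely at the step you flagged. Writing $J^m(\gamma,\mu^i)=\int G\,d\mu^i-C_1T-C_1\int|u|^q\,d\mu^i$ forces you to bound $\limsup_i\int|u|^q\,d\mu^i$ from above by $\int|u|^q\,d\mu$, and neither the moment cap defining $\PP_U^R$ nor your truncation can deliver this: the bound $\int|u|^q\,d\mu^i\le R$ does not make the $q$-moments uniformly integrable, so $\sup_i\int_{\{|u|>N\}}(1+|u|^q)\,\mu^i(dt,du)$ need not tend to $0$ as $N\to\infty$. Concretely, $\mu^i=\LL_{[0,T]}\otimes\bigl((1-i^{-q})\delta_{\{0\}}+i^{-q}\delta_{\{u_i\}}\bigr)$ with $|u_i|^q=i^{q}R/T$ lies in $\PP_U^R$, converges narrowly to $\LL_{[0,T]}\otimes\delta_{\{0\}}$, yet carries $q$-moment $R$ entirely in the tail beyond any fixed $N$ for all large $i$. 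The paper avoids tail control altogether by a different damping: it writes $L=\frac{L}{1+\varepsilon|u|^q}\,(1+\varepsilon|u|^q)$, discards the excess $\int\frac{\varepsilon|u|^qL}{1+\varepsilon|u|^q}\,d\mu^i$ (a one-sided step, legitimate for a liminf inequality provided $L\ge0$), applies narrow convergence to the \emph{bounded} continuous function $L/(1+\varepsilon|u|^q)$, and only then sends $\varepsilon\to0$ on the limit measure $\mu$, whose $q$-moment is finite. Note that both that argument and any repair of yours require a one-sided condition such as $L\ge0$, or a lower bound $L\ge-c(1+|u|^p)$ with $p<q$ (for which the $p$-moments \emph{are} uniformly integrable under the $q$-moment cap): under (L\ref{L3_1}) alone, taking $L=-|u|^q$ in the example above gives $J^m(\gamma,\mu^i)=-R$ for every $i$ while $J^m(\gamma,\mu)=0$, so the tail estimate you hoped to squeeze out is not merely delicate --- it is unavailable, and the decomposition through $G$ cannot close without an additional sign hypothesis on $L$.
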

\begin{proof}
(1) Fix a sequence $\left\{ \gamma_i \right\} _{i=1}^{\infty}$ that converges to some $\gamma$ in $\Gamma_T^R$. For any $\mu \in \PP_U^R$, we have
\begin{align*}
	& \left\vert J^m \left( \gamma_i, \mu \right) -J^m \left( \gamma, \mu \right) \right \vert
	\leq \int_0^T \int_{\R^n} \left\vert L \left( \gamma_i(t),u,m(t) \right) - L \left( \gamma(t),u,m(t) \right) \right\vert \mu(dt,d u)  \\
	& \leq \left\Vert \gamma_i - \gamma \right\Vert_{\infty} \int_0^T \int_{\R^n} \int_0^1 \left\vert \frac{\partial L}{\partial x} \left( \gamma(t) + \lambda \left( \gamma_i(t)-\gamma(t) \right) , u ,m(t) \right) \right\vert d \lambda \, \mu(dt,du).
\end{align*}
By (L\ref{L4_1}), the proof is completed since we obtain that
\begin{equation*}
	\int_0^T \int_{\R^n} \int_0^1 \left \vert \frac{\partial L}{\partial x} \left( \gamma(t) + \lambda \left( \gamma_i(t)-\gamma(t) \right) , u ,m(t) \right) \right\vert d \lambda \, \mu(dt,du) \leq C_2 \left(T+R \right).
\end{equation*}	
%which completes the proof.

%\medskip

(2) For any $\varepsilon >0$, we obtain that
\begin{align*}
	\int_0^T \int_{\R^n} L \left( \gamma(t) , u , m(t) \right) \mu^i (dt,du)
	& = \int_0^T \int_{\R^n} \frac{L \left( \gamma(t) , u , m(t) \right)}{1 + \varepsilon \vert u \vert ^{q}} \left( 1 + \varepsilon \vert u \vert ^{q} \right) \mu^i (dt,du) \\
	& \geq \int_0^T \int_{\R^n} \frac{L \left( \gamma(t) , u , m(t) \right)}{1 + \varepsilon \vert u \vert ^{q}} \mu^i (dt,du).
\end{align*}
By (L\ref{L1_1}) and (L\ref{L3_1}), the map
$$
(t,u) \mapsto \frac{L \left( \gamma(t) , u , m(t) \right)}{1 + \varepsilon \vert u \vert ^{q}}
$$
is bounded and continuous. Thus,
$$
\liminf_{n \rightarrow \infty} \int_0^T \int_{\R^n} L \left( \gamma(t) , u , m(t) \right) \mu^i (dt,du) \geq \int_0^T \int_{\R^n} \frac{L \left( \gamma(t) , u , m(t) \right)}{1 + \varepsilon \vert u \vert ^{q}} \mu (dt,du).
$$
Let $\varepsilon$ tend to 0. The proof is complete.
\end{proof}

%\medskip
%\medskip

%E(\eta) is non-empty
For any $x \in \T^d$, denote by $\Gamma_{m}^* (x)$ the set of curves associated with an optimal control $\mu^*$, i.e.,
$$
\Gamma_{m}^* (x) := \left\{ \gamma^* \in \Gamma_T^R \, \middle\vert \,\, \gamma^* = \gamma^* \left( \cdot ; x , \mu^*, m \right) , J^m \left( \gamma^*, \mu^* \right) = \inf_{\mu \in \PP_U^R, \gamma = \gamma \left( \cdot ; x , \mu, m \right) } J^m \left( \gamma, \mu \right) \right\}.
$$
Define the map $F_m : \T^d \rightarrow \mathcal{B} \left( \Gamma_T^R \right)$ by
$$
F_m(x) := \Gamma_{m}^* (x), \quad \forall x \in \T^d.
$$

%\medskip

\begin{lemma} \label{F_closed_1}
	Let $m \in \M_r$. Let the sequence $\left\{ x_i \right\}_{i=1}^{\infty}$ converge to $x$ in $\T^d$. If there exists $\gamma_{x_i} \in F_m \left( x_i \right) $ for each $i \in \N$ such that the sequence $\left\{ \gamma_{x_i} \right\}_{i=1}^{\infty}$ converges to some $\gamma$ in $\Gamma_T^R$, then $\gamma \in F_m (x)$.
\end{lemma}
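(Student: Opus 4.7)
The strategy is to extract a narrowly convergent subsequence of optimal controls associated with the $\gamma_{x_i}$, pass to the limit in the state equation to identify the candidate optimizer, and then use semi-continuity of $J^m$ to conclude optimality.

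For each $i$, by definition of $F_m(x_i)$ there exists $\mu^{*,i} \in \PP_U^R$ with $\gamma_{x_i} = \gamma(\,\cdot\,;x_i,\mu^{*,i},m)$ and $J^m(\gamma_{x_i},\mu^{*,i}) = \inf\{J^m(\gamma,\mu) : \mu\in\PP_U^R,\ \gamma=\gamma(\,\cdot\,;x_i,\mu,m)\}$. Since $\PP_U^R$ is compact with respect to narrow convergence, after passing to a (not relabeled) subsequence we have $\mu^{*,i}\to\mu^*$ narrowly for some $\mu^* \in \PP_U^R$. The first key step is to show $\gamma = \gamma(\,\cdot\,;x,\mu^*,m)$. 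This mirrors the argument in the proof of Lemma \ref{bounded of gamma and gamma^dot_1}: writing the state equation for $\gamma_{x_i}$, the difference with $x+\int_0^t\!\int f(\gamma(s),u,m(s))\mu^*(ds,du)$ splits into (i) $|x_i-x|\to 0$, (ii) a term controlled by $\mathrm{Lip}_x(f)\int_0^t|\gamma_{x_i}(s)-\gamma(s)|ds$ using (F\ref{f2_1}), and (iii) $\left|\int_0^t\!\int f(\gamma(s),u,m(s))(\mu^{*,i}-\mu^*)(ds,du)\right|$, which vanishes by Proposition \ref{D-L A.1} together with the bounds of Remark \ref{bounded_of_f}. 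Gronwall's inequality then forces $\gamma_{x_i}\to\gamma(\,\cdot\,;x,\mu^*,m)$ uniformly, so by uniqueness of the uniform limit $\gamma=\gamma(\,\cdot\,;x,\mu^*,m)$.

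The second step is to prove that $\mu^*$ is optimal for the initial datum $x$. Let $\tilde\mu\in\PP_U^R$ be arbitrary and set $\tilde\gamma := \gamma(\,\cdot\,;x,\tilde\mu,m)$, $\tilde\gamma_i := \gamma(\,\cdot\,;x_i,\tilde\mu,m)$. Continuous dependence on the initial condition (Gronwall plus (F\ref{f2_1})) gives $\tilde\gamma_i\to\tilde\gamma$ uniformly. By optimality of $\gamma_{x_i}$ one has $J^m(\gamma_{x_i},\mu^{*,i})\le J^m(\tilde\gamma_i,\tilde\mu)$. On the right-hand side, Lemma \ref{Jm_cont_1}(1) applied to $J^m(\cdot,\tilde\mu)$ yields $J^m(\tilde\gamma_i,\tilde\mu)\to J^m(\tilde\gamma,\tilde\mu)$. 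On the left-hand side, decompose
$$
J^m(\gamma_{x_i},\mu^{*,i}) = \bigl[J^m(\gamma_{x_i},\mu^{*,i})-J^m(\gamma,\mu^{*,i})\bigr] + J^m(\gamma,\mu^{*,i});
$$
the bracketed term tends to $0$ by the uniform equi-continuity stated in Lemma \ref{Jm_cont_1}(1), while Lemma \ref{Jm_cont_1}(2) gives $\liminf_i J^m(\gamma,\mu^{*,i}) \ge J^m(\gamma,\mu^*)$. Combining these,
$$
J^m(\gamma,\mu^*) \le \liminf_{i\to\infty} J^m(\gamma_{x_i},\mu^{*,i}) \le \lim_{i\to\infty} J^m(\tilde\gamma_i,\tilde\mu) = J^m(\tilde\gamma,\tilde\mu).
$$
Since $\tilde\mu$ was arbitrary, $\mu^*$ is optimal at $x$, and therefore $\gamma\in F_m(x)$.

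The main obstacle is Step 1, where three objects vary simultaneously (initial point, trajectory, and control measure): one must ensure that the test function $u\mapsto f(\gamma(s),u,m(s))$ in the narrow-convergence step is bounded on the support of the $\mu^{*,i}$, which is exactly what Remark \ref{bounded_of_f} guarantees on $\PP_U^R$. Everything else is a routine combination of Gronwall, the Lipschitz estimate in $x$, and the semi-continuity lemma already established.
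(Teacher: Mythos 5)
Your proposal is correct and follows essentially the same route as the paper's proof: extract a convergent subsequence of optimal controls by compactness of $\PP_U^R$, identify the limit trajectory as $\gamma(\cdot;x,\mu^*,m)$ via Gronwall and Proposition \ref{D-L A.1}, and pass to the limit in the optimality inequality $J^m(\gamma_{x_i},\mu^{*,i})\le J^m(\tilde\gamma_i,\tilde\mu)$ using Lemma \ref{Jm_cont_1}. The only differences are cosmetic: you perform the identification step before the optimality step (the paper does the reverse), and you spell out the decomposition of the left-hand limit into an equi-continuity term and a lower semi-continuity term, which the paper leaves implicit.
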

\begin{proof}
Denote by $\mu_{x_i}$ the measure such that $\gamma_{x_i} = \gamma_{x_i} \left( \cdot; x_i, \mu_{x_i},m \right)$ for any $i \in \N$.
Due to the compactness of $\PP_U^R$, there exists a measure $\bar{\mu}$ such that $\tilde{d_1} \left(\mu_{x_i}, \bar{\mu} \right)$ converges to 0 as $ i \rightarrow \infty$.
Let $\tilde{\gamma} = \tilde{\gamma} \left( \cdot; x, \tilde{\mu},m \right)$ and $\tilde{\gamma}_{x_i} = \tilde{\gamma}_{x_i} \left( \cdot; x_i, \tilde{\mu},m \right)$ for any $\tilde{\mu} \in \PP_U^R$. Since $x_i$ converges to $x$ in $\T^d$, we have $\tilde{\gamma}_{x_i}$ converges to $\tilde{\gamma}$ in ${\Gamma}_T^R$.
Moreover, since $\gamma_{x_i} \in \Gamma_{m}^* (x_i)$, we obtain that
\begin{equation} \label{medium---_1}
	J^m \left( \gamma_{x_i}, \mu_{x_i} \right) \leq J^m \left( \tilde{\gamma}_{x_i}, \tilde{\mu} \right).   
\end{equation}
Let $i$ tend to infinity.
By Lemma \ref{Jm_cont_1}, we have $J^m \left( \gamma, \bar{\mu} \right) \leq J^m \left( \tilde{\gamma}, \tilde{\mu} \right)$. 

The only thing left to prove is that $\gamma=\gamma \left( \cdot; x, \bar{\mu},m \right)$, i.e.,
$$
\gamma(t) = x + \int_0^t \int_{\R^n} f \left( \gamma(s), u ,m(s) \right) \bar{\mu} \left( ds,du \right).
$$
Since $\gamma_{x_i}(t) = x_i + \int_0^t \int_{\R^n} f \left( \gamma_{x_i}(s), u , m(s) \right) \mu_{x_i} \left( ds,du \right)$, the proof is complete by (F\ref{f1_1}), (F\ref{f2_1}) and Proposition \ref{D-L A.1}.
\end{proof}

%\medskip

\begin{proposition} \label{nonemp_pre_1}
	For each $m\in \M_r$,
	\begin{enumerate}[(1)]
		\item the set $\Gamma_{m}^* (x)$ is non-empty for any $x\in \T^d$.
		\item the map $F_m$ has a closed graph.
	\end{enumerate}
\end{proposition}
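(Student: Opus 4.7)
For part (1), I will apply the direct method of the calculus of variations. Fix $x \in \T^d$ and $m \in \M_r$, and pick a minimizing sequence $\{\mu_i\}_{i=1}^{\infty} \subset \PP_U^R$ for the value
$$
V(x) := \inf_{\mu \in \PP_U^R} J^m\bigl(\gamma(\cdot;x,\mu,m), \mu\bigr).
$$
Since $\PP_U^R$ is compact (established in the Main results section), after passing to a subsequence we may assume $\mu_i \to \mu^*$ narrowly for some $\mu^* \in \PP_U^R$. Let $\gamma_i := \gamma(\cdot;x,\mu_i,m)$ and $\gamma^* := \gamma(\cdot;x,\mu^*,m)$. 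The same fixed-initial-data, fixed-$m$ continuous-dependence argument used in Lemma \ref{bounded of gamma and gamma^dot_1} (invoking (F\ref{f1_1}), (F\ref{f2_1}), Proposition \ref{D-L A.1} and Gronwall's inequality) yields $\|\gamma_i - \gamma^*\|_{\infty} \to 0$ in $\Gamma_T^R$.

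The main remaining step is joint lower semicontinuity of $J^m$ along this sequence, and this is where the two parts of Lemma \ref{Jm_cont_1} combine. Write
$$
J^m(\gamma_i,\mu_i) = \bigl[J^m(\gamma_i,\mu_i) - J^m(\gamma^*,\mu_i)\bigr] + J^m(\gamma^*,\mu_i).
$$
By the uniform equi-continuity in Lemma \ref{Jm_cont_1}(1), the bracketed term tends to $0$ as $i \to \infty$ (uniformly in the $\mu_i$). By the lower semicontinuity in Lemma \ref{Jm_cont_1}(2) applied with the fixed curve $\gamma^*$, we have $\liminf_i J^m(\gamma^*,\mu_i) \geq J^m(\gamma^*,\mu^*)$. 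Adding these,
$$
V(x) = \liminf_{i\to\infty} J^m(\gamma_i,\mu_i) \geq J^m(\gamma^*,\mu^*) \geq V(x),
$$
so $\mu^*$ is a minimizer and $\gamma^* \in \Gamma_m^*(x)$.

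For part (2), I claim that Lemma \ref{F_closed_1} is exactly the closed-graph property. Indeed, the graph of $F_m$ is
$$
\mathrm{Graph}(F_m) = \bigl\{ (x,\gamma) \in \T^d \times \Gamma_T^R \,\bigl|\, \gamma \in F_m(x) \bigr\}.
$$
Let $\bigl\{(x_i,\gamma_i)\bigr\}_{i=1}^{\infty} \subset \mathrm{Graph}(F_m)$ converge to $(x,\gamma)$ in the product topology of $(\T^d,|\cdot|) \times (\Gamma_T^R, \|\cdot\|_\infty)$. Then $x_i \to x$ in $\T^d$ and $\gamma_i \in F_m(x_i)$ with $\gamma_i \to \gamma$ in $\Gamma_T^R$, so Lemma \ref{F_closed_1} gives $\gamma \in F_m(x)$, i.e.\ $(x,\gamma) \in \mathrm{Graph}(F_m)$. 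Hence the graph is closed.

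The main obstacle is the joint lower semicontinuity in part (1): the coupling between the state $\gamma_i$ (which moves in a strong $\|\cdot\|_\infty$ sense) and the control $\mu_i$ (which moves only narrowly, with no tightness beyond the $q$-moment bound built into $\PP_U^R$) would be problematic for a Lagrangian with superlinear growth in $u$, but the polynomial growth hypothesis (L\ref{L3_1}) together with the equi-continuity in $\gamma$ provided by (L\ref{L4_1}) is precisely what lets Lemma \ref{Jm_cont_1} do the job through the standard \emph{splitting} trick above.
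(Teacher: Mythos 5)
Your part (2) is exactly the paper's proof: the paper also dispatches it as a direct consequence of Lemma \ref{F_closed_1}, and your unpacking of why that lemma is literally the closed-graph property is correct. For part (1) you take a genuinely different (or rather, fully spelled-out) route. The paper disposes of non-emptiness in one sentence, appealing to ``the convexity of $J^m(\gamma,\mu)$ with respect to $\mu$'' and a citation to the relaxed-control literature --- note that $J^m(\gamma,\mu)$ is in fact \emph{linear} in $\mu$, so the real content of that citation is precisely the compactness-plus-lower-semicontinuity mechanism you implement. Your direct-method argument is sound: compactness of $\PP_U^R$ gives a narrowly convergent subsequence $\mu_i\to\mu^*$; the Gronwall continuous-dependence estimate from Lemma \ref{bounded of gamma and gamma^dot_1} (with $x$ and $m$ frozen, so only the $I_3$-type term and the $\Lip_x(f)$ term survive) gives $\gamma_i\to\gamma^*$ uniformly; and the splitting into an equi-continuity term (Lemma \ref{Jm_cont_1}(1)) plus a fixed-curve lower-semicontinuity term (Lemma \ref{Jm_cont_1}(2)) is exactly the right way to combine the two halves of that lemma. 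What your version buys is a self-contained proof that does not outsource the existence step to an external reference; what the paper's version buys is brevity, at the cost of leaving the actual mechanism implicit. One small point worth making explicit if you were to polish this: closedness of $\PP_U^R$ under narrow limits (preservation of the product structure $\LL_{[0,T]}\otimes\mu_t$ and of the $q$-moment bound via Fatou) is what guarantees $\mu^*\in\PP_U^R$; you inherit this from the paper's unproved assertion that $\PP_U^R$ is compact, which is consistent with how the paper itself uses it elsewhere.
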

\begin{proof}
The non-emptiness of $\Gamma_{m}^* (x)$ can be easily proved by the convexity of $J^m \left( \gamma, \mu \right)$ with respect to $\mu$. See \cite{MR2041617} for examples.
Point (2) is a direct consequence of Lemma \ref{F_closed_1}.
\end{proof}

%\medskip

\begin{proposition}\label{nonempty and convex_1}
	For any $\eta \in \PP_0 \left( \Gamma_T^R \right)$, the set $E\left( \eta \right)$ is non-empty and convex.
\end{proposition}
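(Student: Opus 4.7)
The plan is to handle the two assertions separately: non-emptiness will follow from a compactness-plus-lower-semi-continuity argument on $\PP_R(m)$, and convexity will follow from the linearity of the constraints defining $\PP_R(m)$ combined with the linearity of $P \mapsto J(m,P)$.

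For non-emptiness, set $m := (e_t \sharp \eta)_{t \in [0,T]}$, which belongs to $\M_r$ by Remark \ref{well-defined}. By Proposition \ref{properties_PR_1}, the set $\PP_R(m)$ is non-empty and compact with respect to narrow convergence. The main step is to prove that $P \mapsto J(m,P) = \int J^m(\gamma,\mu)\,P(d\gamma,d\mu)$ is lower semi-continuous on $\PP_R(m)$. For this, I would first establish joint lower semi-continuity of the integrand $(\gamma,\mu) \mapsto J^m(\gamma,\mu)$ by decomposing, for a convergent sequence $(\gamma_i,\mu^i) \to (\gamma,\mu)$,
$$
J^m(\gamma_i,\mu^i) - J^m(\gamma,\mu) = \bigl[J^m(\gamma_i,\mu^i) - J^m(\gamma,\mu^i)\bigr] + \bigl[J^m(\gamma,\mu^i) - J^m(\gamma,\mu)\bigr],
$$
and controlling the first bracket via the uniform equi-continuity of Lemma \ref{Jm_cont_1}(1) and the second via the narrow-convergence lower semi-continuity of Lemma \ref{Jm_cont_1}(2). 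Joint lower semi-continuity, together with the uniform lower bound $J^m \geq -C_1(T+R)$ coming from (L\ref{L3_1}) and the definition of $\PP_U^R$, yields via the Portmanteau theorem that $J(m,\cdot)$ is lower semi-continuous. A lower semi-continuous functional on a non-empty compact set attains its minimum, so $R^*(m) \neq \emptyset$, and pushing forward by $\pi_1$ gives a non-empty $E(\eta)$.

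For convexity, observe that the defining conditions of $\PP_R(m)$---the marginal constraint $e_0 \sharp(\pi_1 \sharp P) = m_0$ and the family of identities (\ref{new_con_1})---are linear in $P$, so $\PP_R(m)$ is a convex subset of $\PP(\Gamma_T^R \times \PP_U^R)$. Since $J(m,\cdot)$ is itself a linear functional of $P$, any convex combination $\lambda P_1 + (1-\lambda)P_2$ of elements $P_1,P_2 \in R^*(m)$ still belongs to $\PP_R(m)$ and attains the same minimum value, hence lies in $R^*(m)$. Because $P \mapsto \pi_1 \sharp P$ commutes with convex combinations, for any $\pi_1 \sharp P_1,\pi_1 \sharp P_2 \in E(\eta)$ we obtain $\lambda\, \pi_1 \sharp P_1 + (1-\lambda)\, \pi_1 \sharp P_2 = \pi_1 \sharp(\lambda P_1 + (1-\lambda)P_2) \in E(\eta)$.

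I expect the one delicate point to be the joint lower semi-continuity of $J^m$, since Lemma \ref{Jm_cont_1} treats the two variables separately and one crucially needs the uniform-in-$\mu$ version of continuity in $\gamma$ from part (1), rather than mere continuity, to combine the two estimates. Once this is in place, the passage from pointwise lower semi-continuity of $J^m$ to lower semi-continuity of $P \mapsto \int J^m\,dP$ under narrow convergence, and the linearity-based convexity argument, are routine given the compactness of $\PP_R(m)$ already established in Proposition \ref{properties_PR_1}.
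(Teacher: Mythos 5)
Your proof is correct, but it follows a genuinely different route from the paper. You obtain non-emptiness of $R^*(m)$ by the direct method: $\PP_R(m)$ is non-empty and narrowly compact by Proposition \ref{properties_PR_1}, and $P \mapsto J(m,P)$ is lower semi-continuous because $(\gamma,\mu)\mapsto J^m(\gamma,\mu)$ is bounded (by (L\ref{L3_1}) and the moment bound defining $\PP_U^R$) and jointly lower semi-continuous; your decomposition of $J^m(\gamma_i,\mu^i)-J^m(\gamma,\mu)$ does close the one delicate gap you flagged, since the proof of Lemma \ref{Jm_cont_1}(1) actually yields the quantitative bound $\vert J^m(\gamma_i,\mu)-J^m(\gamma,\mu)\vert \leq C_2(T+R)\Vert\gamma_i-\gamma\Vert_\infty$ uniformly over $\mu\in\PP_U^R$, which combines cleanly with Lemma \ref{Jm_cont_1}(2) and then with Corollary \ref{lemma of s representation theorem} to give lower semi-continuity of $J(m,\cdot)$. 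The paper instead constructs an explicit minimizer: it takes a measurable selection $x\mapsto\tilde\gamma_x$ of pointwise-optimal trajectories $\Gamma_m^*(x)$ (via Proposition \ref{nonemp_pre_1}, measurability of $F_m$, and a measurable selection theorem), forms $\tilde\eta=\int_{\T^d}\delta_{\{\tilde\gamma_x\}}m_0(dx)$, and shows $h\sharp\tilde\eta$ is globally optimal by disintegrating an arbitrary competitor $\tilde P\in\PP_R(m)$ over its initial condition and using trajectory-wise optimality. What the paper's construction buys is additional structure — a minimizer concentrated on a graph over $\T^d$, together with the interchange identity showing the measure-valued problem has the same value as the trajectory-wise one — which is in the spirit of the later strict-equilibrium results; what your argument buys is brevity and the avoidance of measurable-selection machinery, at the cost of producing only an abstract minimizer. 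Your convexity argument (linearity of the constraints defining $\PP_R(m)$, linearity of $J(m,\cdot)$, and commutation of $\pi_1\sharp$ with convex combinations) is exactly the routine verification the paper leaves implicit.
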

\begin{proof}
Let $m(t) = e_t \sharp \eta$ for any $t \in [0,T]$. By Proposition \ref{nonemp_pre_1} and \cite[Proposition 9.5]{MR3361899}, the map $F_m$ is measurable. Thus, there exists a measurable selection $\tilde{\gamma}_x \in \Gamma_{m}^* (x)$ by \cite[Chapter3, Theorem 5.3]{clarke2008nonsmooth}. Define the measure $\tilde{\eta} \in \PP_0 \left( \Gamma_T^R \right)$ by
$$
\tilde{\eta} (A) := \int_{\T^d} \delta_{\left\{ \tilde{\gamma}_x\right\}} (A) m_0 (dx), \quad \forall A \in \mathcal{B} \left( \Gamma_T^R \right).
$$

We claim that $\tilde{\eta} \in E \left( \eta \right)$.
Define the map $h: \left\{ \tilde{\gamma}_x\, \middle\vert \,\, x \in \T^d \right\} \rightarrow \Gamma_T^R \times \PP_{U}^{R}$ by
$$
\tilde{\gamma}_x \mapsto \left( \tilde{\gamma}_x , \mu^x \right), \text{where}\,\, \dot{\tilde{\gamma}}_x (t) = \int_{\R^n} f \left( \tilde{\gamma}_x(t), u, m(t) \right) \mu^x_t(du), \quad \forall t \in [0,T].
$$
It is clear that $\tilde{\eta} = \pi_1 \sharp \left( h \sharp \tilde{\eta} \right)$.
%More precisely, for any function $g\in \CC_b \left( \Gamma_T^R \right)$, we have
%\begin{align*}
%	\int_{\Gamma_T^R} g \left( \gamma \right) \pi_1 \sharp \left( h \sharp \tilde{\eta} \right) \left( d \gamma \right)
%	& = \int_{\Gamma_T^R} g \left( \gamma \right) \mathbf{1}_{\left\{ \tilde{\gamma}_x \, \middle\vert \,\, x \in \T^d \right\} } (\gamma) \tilde{\eta} \left( d \gamma \right) \\
%	& = \int_{\T^d} \int_{\Gamma_T^R} \mathbf{1}_{\left\{ \tilde{\gamma}_x \, \middle\vert \,\, x \in \T^d \right\} }(\gamma) \, g \left( \gamma \right) \delta_{\left\{ \tilde{\gamma}_x\right\}}  \left(d \gamma \right) m_0 \left(dx \right) \\
%	& = \int_{\T^d} \int_{\Gamma_T^R} g \left( \gamma \right) \delta_{\left\{ \tilde{\gamma}_x\right\} }  \left(d \gamma \right) m_0 \left(dx \right) \\
%	& = \int_{\Gamma_T^R} g \left( \gamma \right) \tilde{\eta}  \left( d \gamma \right).
%\end{align*}
Hence, we only need to verify that $h \sharp \tilde{\eta} \in R^* (m)$.
Indeed, we have $h \sharp \tilde{\eta} \in \PP \left( \Gamma_T^R \times \PP_U^R \right)$ and $e_0 \sharp \tilde{\eta} = m_0$ by definition.
For any pair $\left( \gamma, \mu \right) \in \supp \left( h \sharp \tilde{\eta} \right)$, the pair satisfies $\dot{\gamma} = \gamma \left( \cdot ; \gamma(0) , \mu,m \right)$. By Proposition \ref{properties_PR_1}(4), we obtain that $h \sharp \tilde{\eta} \in \PP_R(m)$.

Regarding the minimization property of $h \sharp \tilde{\eta}$, we first define $\Gamma_T^R (m) \subset \Gamma_T^R$ by
$$
\Gamma_T^R (m) := \left\{ \gamma \in \Gamma^R_T \, \middle\vert \,\, \,\exists (x,\mu) \in \T^d \times \PP_U^R, s.t. \,\, \gamma = \gamma \left( \cdot; x,\mu, m \right) \right\}.
$$
%{\color{red} and $\Gamma^R_T(m,x) \subset \Gamma_T^R (m)$ for any $x \in \T^d$ by
%$$
%\Gamma^R_T(m,x) := \left\{ \gamma \in \Gamma^R_T \, \middle\vert \,\, \,\exists \mu \in \PP_U^R, s.t. \,\, \gamma = \gamma \left( \cdot; x,\mu, m \right) \right\}.
%$$}
By Proposition \ref{properties_PR_1}(4), for any $\tilde{P} \in \PP_R(m)$, the support of $\tilde{P}$ satisfies that $\supp(\tilde{P}) \subset \Gamma_T^R (m) \times \PP^R_U$.
%{\color{red} Note that for any $m \in \MMM_r$, $\Gamma^R_T(m) = \bigcup_{x \in \T^d} \Gamma^R_T (m,x)$ and $\Gamma^R_T (m,x_1) \cap \Gamma^R_T (m,x_2) = \emptyset$ for any $x_1 \neq x_2 \in \T^d$.}
Thus, we obtain that
\begin{align*}
	J \left(m, h \sharp \tilde{\eta} \right) 
	& = \int_{\Gamma_T^R \times \PP_{U}^{R}} J^m \left( \gamma, \mu \right) h \sharp \tilde{\eta} \left( d\gamma, d\mu \right) \\
%	& = \int_{\T^d} \int_{\Gamma_T^R} J^m \left( h \left(\gamma \right) \right) \delta_{\left\{ \tilde{\gamma}_x\right\}}  \left(d \gamma \right) m_0 \left(dx \right) \\ 
	& = \int_{\T^d} J^m \left( \tilde{\gamma}_x , \mu^x \right) m_0(dx) \\
	& = \int_{\T^d} J^m \left( \tilde{\gamma}_x , \mu^x \right) e_0 \sharp \left( \pi_1 \sharp \tilde{P} \right)(dx) \\
%	& = \int_{\Gamma_T^R \times \PP_{U}^{R}} J^m \left( \tilde{\gamma}_{\gamma(0)} , \mu^{\gamma(0)} \right) \tilde{P} \left(d\gamma, d \mu \right) \\
	& = \int_{\Gamma_T^R(m) \times \PP_{U}^{R}} J^m \left( \tilde{\gamma}_{\gamma(0)} , \mu^{\gamma(0)} \right) \tilde{P} \left(d\gamma, d \mu \right) \\
%	&{\color{red} = \Sigma_{x \in \T^d} \int_{\Gamma_T^R(m,x) \times \PP_{U}^{R}} J^m \left( \tilde{\gamma}_{\gamma(0)} , \mu^{\gamma(0)} \right) \tilde{P} \left(d\gamma, d \mu \right) }\\
	%&{\color{red} = \Sigma_{x \in \T^d} \int_{\Gamma_T^R(m,x) \times \PP_{U}^{R}} J^m \left( \tilde{\gamma}_{x} , \mu^{x} \right) \tilde{P} \left(d\gamma, d \mu \right)} \\
	%& {\color{red}\leq \Sigma_{x \in \T^d} \int_{\Gamma_T^R(m,x) \times \PP_{U}^{R}}J^m \left( \gamma , \mu \right) \tilde{P} \left(d\gamma, d \mu \right) }\\
	& \leq \int_{\Gamma_T^R(m) \times \PP_{U}^{R}} J^m \left( \gamma , \mu \right) \tilde{P} \left(d\gamma, d \mu \right) \\
	& = \int_{\Gamma_T^R \times \PP_{U}^{R}} J^m \left( \gamma , \mu \right) \tilde{P} \left(d\gamma, d \mu \right) = J \left( m, \tilde{P} \right).
\end{align*}

Therefore, the measure $\tilde{\eta} \in E \left( \eta \right)$, which implies the non-emptiness of $E \left( \eta \right)$.

The convexity can be easily proved by definition.
\end{proof}

%\medskip

%E has a closed graph
\begin{proposition}\label{main_1}
	Let $\left\{\eta_i\right\}_{i=1}^\infty \subset \PP_0 \left( \Gamma_T^R \right)$ and $\eta \in \PP_0 \left( \Gamma_T^R \right)$ such that $\eta_{i}$ converges to $\eta$ narrowly. If there exists $\hat{\eta}_i \in E\left(\eta_{i} \right)$ for any $i \in \N$ such that $\hat{\eta}_{i}$ converges to some $\hat{\eta}$ narrowly, then $\hat{\eta} \in E(\eta)$.
\end{proposition}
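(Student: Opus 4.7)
The plan is to verify the closed-graph property of $E$ by extracting a narrow subsequential limit of the minimizers and checking it lies in $R^*(m)$ with the correct first marginal. By definition of $E$, for each $i$ I may pick $P_i \in R^*(m^i)$ with $\pi_1 \sharp P_i = \hat\eta_i$, where $m^i := (e_t \sharp \eta_i)_{t \in [0,T]}$ and $m := (e_t \sharp \eta)_{t \in [0,T]}$. First I would check that $\hat{d_1}(m^i, m) \to 0$: narrow convergence of $\eta_i$ and continuity of $e_t$ give pointwise $d_1$-convergence (using compactness of $\T^d$), and the uniform $1/r$-H\"older bound from Remark \ref{well-defined} combined with Ascoli upgrades this to uniform convergence in $\hat{d_1}$. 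Next, compactness of $\PP(\Gamma_T^R \times \PP_U^R)$ (from Lemma \ref{bounded of gamma and gamma^dot_1} and compactness of $\PP_U^R$) yields a subsequential narrow limit $P$ of $P_i$, and continuity of $\pi_1$ forces $\pi_1 \sharp P = \hat\eta$, which in particular gives $e_0 \sharp(\pi_1 \sharp P) = m_0$.

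To show that $P$ satisfies the state-equation constraint (\ref{new_con_1}) with respect to $m$, I denote by $\kappa^\nu(\gamma, \mu)$ the integrand on the left of (\ref{new_con_1}) with state distribution $\nu \in \M_r$. Assumption (F\ref{f3_1}) yields the uniform bound $\|\kappa^m - \kappa^{m^i}\|_\infty \leq |v| T \Lip_{\nu}(f) \hat{d_1}(m^i, m)$, and by the same argument as in the proof of Proposition \ref{properties_PR_1}(1) each $\kappa^\nu$ is bounded and continuous on $\Gamma_T^R \times \PP_U^R$. For any open $N \subset \Gamma_T^R \times \PP_U^R$, since $P_i \in \PP_R(m^i)$ and the sets $N^\pm_i := \{\pm\kappa^{m^i} > 0\} \cap N$ are themselves open, condition (\ref{new_con_1}) applied to $N^\pm_i$ gives $\int_N |\kappa^{m^i}| dP_i = 0$. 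Therefore $\int_N |\kappa^m| dP_i \to 0$, and since $\mathbf{1}_N |\kappa^m|$ is nonnegative and lower semicontinuous, Portmanteau delivers $\int_N |\kappa^m| dP \leq \liminf_i \int_N |\kappa^m| dP_i = 0$, so that $P$ meets (\ref{new_con_1}) relative to $m$; hence $P \in \PP_R(m)$.

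For optimality, given any $\tilde P \in \PP_R(m)$ I would construct a competitor $\tilde P_i := \Psi_i \sharp \tilde P \in \PP_R(m^i)$, where $\Psi_i(\gamma, \mu) := (\gamma^i, \mu)$ and $\gamma^i$ is the unique solution of (\ref{traj_1}) with data $(\gamma(0), \mu, m^i)$. A Gronwall estimate of the same flavor as in the proof of Lemma \ref{bounded of gamma and gamma^dot_1} gives $\sup_{(\gamma, \mu)} \|\gamma^i - \gamma\|_\infty \leq \Lip_{\nu}(f) T \hat{d_1}(m^i, m) e^{\Lip_x(f) T} \to 0$. Assumption (L\ref{L2_1}) yields $|J(\nu, Q) - J(m, Q)| \leq T \omega(\hat{d_1}(\nu, m))$ uniformly in $Q$. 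Combining the two parts of Lemma \ref{Jm_cont_1} with the lower bound from (L\ref{L3_1}) shows that $J^m$ is jointly lower semicontinuous and bounded below on $\Gamma_T^R \times \PP_U^R$, so $\liminf_i J(m, P_i) \geq J(m, P)$, and uniform equi-continuity of $J^m(\cdot, \mu)$ in $\gamma$ together with the $\Psi_i$-estimate gives $J(m, \tilde P_i) \to J(m, \tilde P)$. Chaining $J(m, P) \leq \liminf_i J(m^i, P_i) \leq \liminf_i J(m^i, \tilde P_i) = J(m, \tilde P)$ for arbitrary $\tilde P \in \PP_R(m)$ yields $P \in R^*(m)$, completing the argument.

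The main obstacle is the optimality step. Narrow convergence of $P_i$ supplies only one-sided inequalities, so I must both upgrade Lemma \ref{Jm_cont_1} to joint lower semicontinuity of $J^m$ (combining the uniform equi-continuity in $\gamma$ and the lower semicontinuity in $\mu$) and build admissible competitors $\tilde P_i \in \PP_R(m^i)$ that converge back to any prescribed $\tilde P \in \PP_R(m)$. The latter is what forces the use of the flow map $\Psi_i$ and leans crucially on the Lipschitz dependence of $f$ on its measure argument from (F\ref{f3_1}).
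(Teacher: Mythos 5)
Your proposal is correct, and the skeleton matches the paper's: pick minimizers $P_i \in R^*(m^i)$ projecting to $\hat\eta_i$, extract a narrow limit $P$ with $\pi_1\sharp P = \hat\eta$, verify $P \in \PP_R(m)$ by comparing the constraint functionals for $m^i$ and $m$ via (F\ref{f3_1}), and pass to the limit in the cost using the uniform estimate $|J^{m^i}-J^m| \le T\omega(\hat{d_1}(m^i,m))$ from (L\ref{L2_1}) together with lower semicontinuity of $J^m$ (the paper invokes Corollary \ref{lemma of s representation theorem} where you invoke Portmanteau for the joint lower semicontinuity you assemble from the two parts of Lemma \ref{Jm_cont_1}; these are the same device). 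The one genuine divergence is the optimality step. The paper concludes with the chain $J(m,P) \le \liminf_i J(m^i,P_i) \le \liminf_i J(m^i,\hat P) = J(m,\hat P)$ for an arbitrary $\hat P \in \PP_R(m)$, which implicitly uses $\hat P$ as an admissible competitor for the $m^i$-problem even though the admissible set $\PP_R(\cdot)$ depends on the state distribution through the dynamics; you instead push $\tilde P$ forward by the flow map $\Psi_i$ to manufacture a bona fide competitor $\tilde P_i \in \PP_R(m^i)$ and control $J(m^i,\tilde P_i) \to J(m,\tilde P)$ via Gronwall and the equicontinuity in Lemma \ref{Jm_cont_1}(1). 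Your route costs an extra stability estimate but is the more watertight way to close the argument, since it does not require $\PP_R(m) \subset \PP_R(m^i)$; it is in effect a repair of the step the paper leaves implicit. Your alternative verification of (\ref{new_con_1}) for the limit $P$ (integrating $|\kappa^{m^i}|$ over $N^{\pm}_i$ and then using that $\mathbf{1}_N|\kappa^m|$ is nonnegative lower semicontinuous) is equivalent in substance to the paper's $B_1+B_2$ splitting combined with the $N^+,N^-$ trick from Proposition \ref{properties_PR_1}(1).
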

\begin{proof}
For any $t \in [0,T]$, define $m^i(t)= e_t \sharp \eta_i$ and $m(t)= e_t \sharp \eta$.
%We first claim that the function $J^{m^i} \left( \gamma, \mu \right):\Gamma_T^R \times \PP_{U}^{R} \rightarrow \R$ uniformly converges to the function $J^m(\gamma, \mu):\Gamma_T^R \times \PP_{U}^{R} \rightarrow \R$ as $i \rightarrow \infty$.
By (L\ref{L2_1}),
\begin{align*}
	\left \vert J^{m^i} \left( \gamma, \mu \right) - J^m(\gamma, \mu) \right\vert 
	&\leq \int_0^T \int_{\R^n} \left \vert L \left( \gamma(t), u, m^i(t) \right) - L \left( \gamma(t), u, m(t) \right) \right \vert \mu_t (d u) d t \\
	&  \leq \int_0^T \int_{\R^n} \omega \left( d_1 \left( m^i(t), m(t) \right) \right) \mu_t(d u) d t 
    = \int_0^T \omega \left( d_1 \left( m^i(t), m(t) \right) \right) dt.
\end{align*}
Indeed, since $\eta_i$ converges to $\eta$ narrowly and $\Gamma _T^R$ is compact, we have $d_1 \left( \eta_i , \eta \right) \rightarrow 0$. Furthermore, we obtain that $d_1 \left( m^i(t) , m(t) \right) \leq d_1 \left( \eta_i , \eta \right)$ for any $t \in [0,T]$ by definition.
%\begin{align*}
%	d_1 \left( m^i(t) , m(t) \right) 
%	& = \sup \left\{ \int_{\T^d} g(x) m^i(t) (dx) - \int_{\T^d} g(x) m(t) (dx)\, \middle\vert \,\, g(x) \, \text{is $1$-Lip function w.r.t}\,\, x\right\} \\
%	& \leq \sup \left\{ \int_{\Gamma _T^R} g\left(\gamma(t)\right) \eta_i (d \gamma) - \int_{\Gamma _T^R} g\left(\gamma(t)\right) \eta (d \gamma)\, \middle\vert \,\, g\left( \gamma(t) \right) \, \text{is $1$-Lip function w.r.t}\,\, \gamma \right\} \\
%	& = d_1 \left( \eta_i , \eta \right).
%\end{align*}
We obtain that
\begin{equation} \label{J^m^i converges to J^m}
\left \vert J^{m^i} \left( \gamma, \mu \right) - J^m(\gamma, \mu) \right\vert \leq T \omega \left( d_1 \left( \eta_i , \eta \right) \right).
\end{equation}
%Thus, the claim is established.
Thus, the function $J^{m^i} \left( \gamma, \mu \right)$ uniformly converges to the function $J^m(\gamma, \mu)$ as $i \rightarrow \infty$.
By (\ref{J^m^i converges to J^m}), the function $\tilde{P} \mapsto J \left( m^i, \tilde{P} \right)$ also uniformly converges to the function $\tilde{P} \mapsto J \left( m, \tilde{P} \right)$ as $i \rightarrow \infty$, i.e.,
\begin{equation} \label{4.1}
	\lim_{i \rightarrow \infty} \sup_{\tilde{P} \in \PP \left(\Gamma _T^R \times \PP_U^R \right)} \left\vert J \left( m^i, \tilde{P} \right) - J \left( m, \tilde{P} \right) \right\vert =0.
\end{equation}

For any $i \in \N$, since $\hat{\eta}_i \in E\left(\eta_{i} \right)$, there exists $P_i \in \PP_R \left(m^i \right)$ such that $\pi_1 \sharp P_i = \hat{\eta}_i$.
Since $\PP \left( \Gamma_T^R \times \PP_{U}^{R} \right)$ is compact with respect to the $d_1$-topology, there exists $P \in \PP \left( \Gamma_T^R \times \PP_{U}^{R} \right)$ such that $d_1 \left( P_i,P \right) \rightarrow 0$ as $i \rightarrow \infty$.
%It is obviously that $\pi_1 \sharp P = \hat{\eta}$.
%More precisely, for any function $g_1 \in \CC_b \left( \Gamma_T^R \right)$, we have
%\begin{align*}
%	\int_{\Gamma_T^R} g_1 \left(\gamma \right) \pi_1 \sharp P \left( d \gamma \right)
%	& = \int_{\Gamma_T^R \times \PP_{U}^{R}} g_1 \left(\gamma \right)P \left( d \gamma, d \mu \right) \\
%	& = \lim_{i \rightarrow \infty} \int_{\Gamma_T^R \times \PP_{U}^{R}} g_1 \left(\gamma \right)P_i \left( d \gamma, d \mu \right) \\
%	& = \lim_{i \rightarrow \infty} \int_{\Gamma_T^R} g_1 \left(\gamma \right) \hat{\eta_i} \left( d \gamma \right) \\
%	& = \int_{\Gamma_T^R} g_1 \left(\gamma \right) \hat{\eta} \left( d \gamma \right).
%\end{align*}
It is obviously that $\pi_1 \sharp P = \hat{\eta}$ and $e_0 \sharp \pi_1 \sharp P = e_0 \sharp \hat{\eta} = m_0$.
%Moreover, we have $e_0 \sharp \pi_1 \sharp P = e_0 \sharp \hat{\eta} = m_0$ by definition. For any function $g_2 \in \CC_b \left( \T^d \right)$, we obtain that
%\begin{align*}
%	\int_{\T^d} g_2(x) e_0 \sharp \hat{\eta} (dx)
%	& = \int_{\Gamma _T^R} g_2 \left( \gamma(0) \right) \hat{\eta} \left( d \gamma \right) \\
%	& = \lim_{i \rightarrow \infty} \int_{\Gamma _T^R} g_2 \left( \gamma(0) \right) \hat{\eta}_i \left( d \gamma \right) \\
%	& = \lim_{i \rightarrow \infty} \int_{\T^d} g_2(x) e_0 \sharp \hat{\eta}_i (dx) \\
%	& = \lim_{i \rightarrow \infty} \int_{\T^d} g_2(x) e_0 \sharp \pi_1 \sharp P_i (dx) = \int_{\T^d} g_2(x) m_0(dx).
%\end{align*}
For any $(t,v) \in [0,T] \times \R^d$ and any open set $N \subset \Gamma_T^R \times \PP_{U}^{R}$, consider
\begin{align*}
	& \left\vert \int_{N} \left \langle v, \gamma(t) - \gamma(0) - \int_0^t \int_{\R^n} f \left( \gamma(s), u,m^i(s) \right) \mu \left( ds,du \right) \right\rangle P_i \left( d\gamma, d \mu \right) \right. \\
	& -\left. \int_{N} \left\langle v, \gamma(t) - \gamma(0) - \int_0^t \int_{\R^n} f \left( \gamma(s), u,m(s) \right) \mu \left( ds,du \right) \right\rangle P \left( d\gamma, d \mu \right) \right\vert  \leq B_1 +B_2,
\end{align*}
where
\begin{align*}
	B_1 := & \left\vert \int_{N} \left \langle v, \gamma(t) - \gamma(0) - \int_0^t \int_{\R^n} f \left( \gamma(s), u,m^i(s) \right) \mu \left( ds,du \right) \right \rangle P_i \left( d\gamma, d \mu \right) \right. \\
	-& \left. \int_{N} \left \langle v, \gamma(t) - \gamma(0) - \int_0^t \int_{\R^n} f \left( \gamma(s), u,m(s) \right) \mu \left( ds,du \right) \right \rangle P_i \left( d\gamma, d \mu \right) \right\vert,
\end{align*}
\begin{align*}
B_2 := & \left\vert \int_{N} \left \langle v, \gamma(t) - \gamma(0) - \int_0^t \int_{\R^n} f \left( \gamma(s), u,m(s) \right) \mu \left( ds,du \right) \right \rangle P_i \left( d\gamma, d \mu \right) \right. \\
	- & \left. \int_{N} \left \langle v, \gamma(t) - \gamma(0) - \int_0^t \int_{\R^n} f \left( \gamma(s), u,m(s) \right) \mu \left( ds,du \right) \right \rangle P \left( d\gamma, d \mu \right) \right\vert.
\end{align*}
Similar to the proof of Proposition \ref{properties_PR_1}(1), we have $\lim_{i \rightarrow \infty} B_2 =0$.
At the same time, since
\begin{align*}
	B_1 
	& =\left\vert \int_{N}  \left \langle v , \int_0^t \int_{\R^n} f \left( \gamma(s) , u, m(s) \right) - f \left( \gamma(s) , u, m^i(s) \right) \mu (ds,du) \right \rangle  P_i \left( d\gamma, d \mu \right) \right\vert \\
	& \leq  \int_{N}   \left\vert v \right\vert \int_0^t \Lip_{\nu}(f) d_1 \left( \eta_i , \eta \right) ds \, P_i \left( d\gamma, d \mu \right) \leq \left\vert v \right\vert \Lip_{\nu}(f) T d_1 \left( \eta_i , \eta \right),
\end{align*}
we have $\lim_{i \rightarrow \infty} B_1 =0$.
Since $P_i \in \PP_R \left( m^i \right)$, we obtain that $P \in \PP_R (m)$.

Consider
\begin{equation*}
	J\left( m^i, P_i \right) - J \left( m, P \right) =
	\left( J\left( m^i,P_i \right) - J \left( m, P_i \right) \right)  + \left( J \left( m, P_i \right) - J \left( m, P \right) \right) \triangleq B_3+B_4.
\end{equation*}
By (\ref{4.1}), $\lim_{i \rightarrow \infty}B_3 = 0$.
Since the function $\left( \gamma, \mu \right) \mapsto J^m \left( \gamma, \mu \right)$ is bounded and lower semi-continuous by Lemma \ref{Jm_cont_1}, it follows from Lemma \ref{lemma of s representation theorem} that $\liminf_{i \rightarrow \infty} B_4 \geq 0$.

In conclusion, for any $\hat{P} \in \PP_R(m)$, we have 
$$
J \left( m,P \right) \leq \liminf_{i \rightarrow \infty} J \left( m^i, P_i \right) \leq \liminf_{i \rightarrow \infty} J \left( m^i, \hat{P} \right) = J \left( m, \hat{P} \right).
$$
Due to the arbitrariness of $\hat{P} \in \PP _R(m)$, we complete the proof.
\end{proof}

%\medskip

%E(\eta) is compact
\begin{corollary} \label{compact_1}
	For any $\eta \in \PP_0 \left( \Gamma_T^R \right)$, the set $E \left( \eta \right)$ is compact.
\end{corollary}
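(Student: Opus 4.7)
The plan is to reduce the corollary to the closedness of $E(\eta)$ inside the ambient space $\PP_0(\Gamma_T^R)$, which the paper has already noted is compact with respect to the $d_1$-topology (and equivalently, narrow convergence, since $\Gamma_T^R$ is a compact metric space by Lemma \ref{bounded of gamma and gamma^dot_1}). Once $E(\eta)$ is shown to be closed, compactness follows automatically because a closed subset of a compact space is compact.

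To prove closedness, I would take an arbitrary sequence $\{\hat{\eta}_i\}_{i=1}^{\infty} \subset E(\eta)$ converging narrowly to some $\hat{\eta} \in \PP_0(\Gamma_T^R)$, and show $\hat{\eta} \in E(\eta)$. The key observation is that Proposition \ref{main_1} delivers precisely this statement once applied to the constant sequence $\eta_i := \eta$ for all $i \in \N$. This constant sequence trivially converges narrowly to $\eta$, and we have $\hat{\eta}_i \in E(\eta) = E(\eta_i)$ by hypothesis, so the proposition yields $\hat{\eta} \in E(\eta)$.

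This then gives that $E(\eta)$ is a closed subset of $\PP_0(\Gamma_T^R)$. Since the latter is itself compact (closedness follows from the continuity of the evaluation map $\eta \mapsto e_0 \sharp \eta$, and boundedness from the compactness of $\Gamma_T^R$), we conclude that $E(\eta)$ is compact.

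I do not expect a genuine obstacle here: the corollary is essentially a routine extraction from Proposition \ref{main_1}, where all the heavy lifting — the lower semicontinuity of $J(m,\cdot)$ via Lemma \ref{Jm_cont_1}, the uniform convergence estimate (\ref{J^m^i converges to J^m}) obtained from (L\ref{L2_1}), and the verification that the limit measure still satisfies the constraint (\ref{new_con_1}) — has already been carried out in that proof. The only minor point worth checking is that narrow convergence and convergence in $d_1$ coincide on $\PP_0(\Gamma_T^R)$, which follows from the compactness of $\Gamma_T^R$, so there is no ambiguity in the notion of limit used in Proposition \ref{main_1} versus the topology specified in the statement of the corollary.
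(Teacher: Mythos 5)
Your proposal is correct and follows essentially the same route as the paper: both deduce closedness of $E(\eta)$ from Proposition \ref{main_1} (you simply make explicit the specialization to the constant sequence $\eta_i \equiv \eta$) and then conclude compactness from the inclusion of $E(\eta)$ in the compact set $\PP_0\left( \Gamma_T^R \right)$.
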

\begin{proof}
By Proposition \ref{main_1}, the set-valued map $E$ has a closed graph, which ensures that $E \left( \eta \right)$ is a closed set. Furthermore, $E(\eta)$ is contained within the compact set $\PP_0 \left( \Gamma_T^R \right)$.
Consequently, $E \left( \eta \right)$ is also compact.
\end{proof}

%\medskip

%proof of main result 1
\begin{proof}[Proof of Theorem \ref{existence_1}]
	Combining Proposition \ref{nonempty and convex_1} and Corollary \ref{compact_1}, the set $E \left( \eta \right)$ is non-empty, convex and compact for any $\eta \in \PP_0 \left( \Gamma_T^R \right)$. Furthermore, $\PP_0 \left( \Gamma_T^R \right)$ is non-empty and compact. By Proposition \ref{main_1}, the set-valued map $E$ has a closed graph, which indicates the continuity of the map. Consequently, there exists a measure $\bar{\eta} \in \PP_0 \left( \Gamma_T^R \right)$, such that $\bar{\eta} \in E \left( \bar{\eta} \right)$ by Kakutani's theorem.
	Thus, there exists $P \in \PP \left(\Gamma _T^R \times \PP_U^R \right)$ such that $\bar{\eta} = \pi_1 \sharp P$ and $P \in R^*\left( \left( e_t \sharp \bar{\eta} \right)_{t \in [0,T]} \right)$. Therefore, the probability measure $P$ belongs to the set $R^* \left( \left( e_t \sharp \pi_1 \sharp P \right)_{t \in [0,T]} \right)$, indicating that $P$ is a relaxed MFC equilibrium.
\end{proof}

\section{Existence of strict relaxed MFC equilibria}\label{strictMFG}
%well-defined of strict relaxed MFG equilibria
\begin{lemma} \label{pre1}
	Fix $m \in \M_r$. For any $x \in \T^d$ and any $u_x: [0,T] \rightarrow \R^n$ such that $\int_0^T \left\vert u_x(t) \right\vert^q dt \leq R$, denote by $\gamma_x^{u_x} = \gamma_x^{u_x} \left( \cdot ; x, \LL_{[0,T]} \otimes \delta_{u_x(t)},m \right)$ the solution of the state equation (\ref{traj_1}). % when the measure $\LL_{[0,T]} \otimes \delta_{u(t)} \in \PP^R_U$.
	Define a probability measure $\eta^u \in \PP_0 \left( \Gamma_T^R \right)$ by
	$$
	\eta^u(A) := \int_{\T^d} \delta_{ \left\{\gamma_x^{u_x}\right\} }(A) m_0(dx), \quad \forall A \in \mathcal{B} \left( \Gamma_T^R \right).
	$$
	For any $\eta \in \PP_0 \left( \Gamma_T^R \right)$, define $P_\eta := \eta \otimes \delta_{\left\{\LL_{[0,T]} \otimes \delta_{u_{\gamma(0)}(t)} \right\}}$. Then, $P_\eta \in \PP_R(m)$ if and only if $\eta = \eta^u$.
	Denote $P^u := \eta^u \otimes \delta_{\left\{\LL_{[0,T]} \otimes \delta_{u_{\gamma(0)}(t)} \right\}} \in \PP_R(m)$.
\end{lemma}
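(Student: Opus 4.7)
The plan is to prove the biconditional by handling the two implications separately, with the main tool in both directions being Proposition \ref{properties_PR_1}(3), which replaces the integral test (\ref{new_con_1}) by the requirement that each $(\gamma,\mu)\in\supp(P_\eta)$ satisfies the pointwise ODE $\dot\gamma(t)=\int_{\R^n} f(\gamma(t),u,m(t))\,\mu_t(du)$.

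For the implication $\eta=\eta^u \Rightarrow P^u\in\PP_R(m)$, I would first verify that $P^u$ is supported in $\Gamma_T^R\times\PP_U^R$: the integrability bound $\int_0^T|u_x(t)|^q\,dt\leq R$ puts $\LL_{[0,T]}\otimes\delta_{u_x(t)}$ in $\PP_U^R$, and then by definition of $\Gamma_T^R$ the corresponding curve $\gamma_x^{u_x}$ sits in $\Gamma_T^R$. The first defining condition of $\PP_R(m)$ is immediate, since $\gamma_x^{u_x}(0)=x$ collapses $\eta^u$ to $m_0$ under $e_0$. For the second condition I invoke Proposition \ref{properties_PR_1}(3) and check the pointwise ODE: on $\supp(P^u)$ every pair has the form $(\gamma_x^{u_x},\LL_{[0,T]}\otimes\delta_{u_x(t)})$, and
\[
\int_{\R^n} f\!\left(\gamma_x^{u_x}(t),u,m(t)\right)\delta_{u_x(t)}(du)
=f\!\left(\gamma_x^{u_x}(t),u_x(t),m(t)\right)=\dot\gamma_x^{u_x}(t)
\]
holds by construction of $\gamma_x^{u_x}$ as solution of (\ref{traj_1}).

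For the converse $P_\eta\in\PP_R(m)\Rightarrow\eta=\eta^u$, the same Proposition \ref{properties_PR_1}(3) ensures that on $\supp(P_\eta)$ we have $\dot\gamma(t)=\int_{\R^n}f(\gamma(t),u,m(t))\,\mu_t(du)$. But on $\supp(P_\eta)$ the second marginal is forced to be $\mu=\LL_{[0,T]}\otimes\delta_{u_{\gamma(0)}(t)}$, so the ODE reduces to the classical equation $\dot\gamma(t)=f(\gamma(t),u_{\gamma(0)}(t),m(t))$ with initial value $\gamma(0)$. By uniqueness of solutions (guaranteed by the Lipschitz assumption (F\ref{f2_1}) through a Gronwall estimate), each such $\gamma$ must coincide with $\gamma_{\gamma(0)}^{u_{\gamma(0)}}$. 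Thus $\supp(\eta)$ is contained in the graph $\{\gamma_x^{u_x}:x\in\T^d\}$ of the map $\tau:x\mapsto\gamma_x^{u_x}$, and $\tau$ is a measurable right-inverse of $e_0$ on this graph (since $\gamma_x^{u_x}(0)=x$). Combined with $e_0\sharp\eta=m_0$, this forces $\eta=\tau\sharp(e_0\sharp\eta)=\tau\sharp m_0=\eta^u$.

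The main technical point I expect to need some care with is the last step: from the knowledge that $\supp(\eta)$ lies in the graph of $\tau$ and $e_0\sharp\eta=m_0$, concluding $\eta=\tau\sharp m_0$ is a standard pushforward/disintegration identity, but it requires $\tau$ to be Borel measurable so that $\eta^u$ itself is a legitimate Borel probability measure on $\Gamma_T^R$. This measurability is not stated among the hypotheses on $\{u_x\}_{x\in\T^d}$ but can be recovered from continuous dependence of ODE solutions on initial data together with a measurable selection applied to the family of controls, which is implicit in the way $\eta^u$ is written as the integral against $m_0$ in the statement. Once this technicality is absorbed, the two implications close without further work.
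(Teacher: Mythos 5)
Your proposal is correct and follows essentially the same route as the paper: one direction is a direct verification of the two defining conditions of $\PP_R(m)$, and the converse combines Proposition \ref{properties_PR_1}(3) with uniqueness of solutions of the ODE to identify every $\gamma\in\supp(\eta)$ with $\gamma_{\gamma(0)}^{u_{\gamma(0)}}$, after which $e_0\sharp\eta=m_0$ forces $\eta=\eta^u$ by testing against bounded continuous functions. Your remark about the Borel measurability of $x\mapsto\gamma_x^{u_x}$ (needed for $\eta^u$ to be well defined) is a legitimate point that the paper leaves implicit, but it does not change the argument.
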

\begin{proof}
Obviously, $P^u \in \PP_R(m)$ by definition.

Suppose $\eta \in \PP_0 \left( \Gamma_T^R \right)$ and $P_\eta \in \PP_R(m)$. For any $\gamma \in \supp(\eta)$, by Proposition \ref{properties_PR_1}(4), there exists $x \in \T^d$ such that $\gamma(0)=x$ and $\gamma = \gamma_x^{u_x}$. By $e_0 \sharp \eta = e_0 \sharp \eta^u = m_0$, for any $g \in \CC_b \left( \Gamma_T^R \right)$,
$$
\int_{\Gamma^R_T} g(\gamma) \eta(d\gamma) = \int_{\Gamma^R_T} g(\gamma_{\gamma(0)}^{u_{\gamma(0)}}) \eta(d\gamma) = \int_{\T^d} g (\gamma_x^{u_x}) m_0(dx) =\int_{\Gamma^R_T} g(\gamma) \eta^u(d\gamma).
$$
Thus, $\eta = \eta^u$.

%	It is clear that $P^u \in \PP \left( \Gamma_T^R \times \PP_U^R\right)$ and $e_0 \sharp \left( \pi_1 \sharp P^u \right) = m_0$. Moreover, for any $(t,v) \in [0,T] \times \R^d$ and any open set $N \subset \Gamma_T^R \times \PP_{U}^{R}$, we can deduce that
%	\begin{align*}
%		& \int_{N} \left \langle v, \gamma(t) - \gamma(0) - \int_0^t \int_{\R^n} f \left( \gamma(s), u,m(s) \right) \mu \left( ds,du \right) \right \rangle P^u \left( d\gamma, d \mu \right) \\
%		= & \int_{\pi_1(N)} \left \langle v, \gamma(t) - \gamma(0) - \int_0^t f \left( \gamma(s), u(s),m(s) \right) ds \right \rangle \eta^u \left( d \gamma \right) \\
%		= & \int_{\T^d} \int_{\pi_1(N)} \left \langle v, \gamma(t) - \gamma(0) - \int_0^t f \left( \gamma(s), u(s),m(s) \right) ds \right \rangle \delta_{\left\{ \gamma_x^u \right\}}\left( d \gamma \right) m_0(dx) \\
%		= & \int_{\left\{x\, \middle\vert \, \gamma_x^u \in \pi_1(N) \right\}} \left \langle v, \gamma_x^u (t) - \gamma_x^u (0) - \int_0^t f \left( \gamma_x^u (s), u(s),m(s) \right) ds \right\rangle m_0(dx) \\
%		= & \int_{\left\{x\, \middle\vert \, \gamma_x^u \in \pi_1(N) \right\}} \left \langle v, \gamma_x^u (t) - \gamma_x^u (0) - \int_0^t \dot{\gamma}_x^u (s) ds \right \rangle m_0(dx) = 0.
%	\end{align*}
%	Therefore, the probability measure $P^u$ belongs to the set $\PP_R(m)$.
\end{proof}

%\medskip

\begin{proposition} \label{3.7_1_1}
	Fix $m \in \M_r$ and $P \in R^* \left( m \right).$ There exists a map $\hat{q}: \left[ 0 , T \right] \times \T^d \rightarrow \PP \left( \R^n \right)$ such that $\hat{P} := \pi_1 \sharp P \otimes \delta_{ \left\{ \LL_{[0,T]} \otimes \hat{q}_{t,\gamma(t)}\right\} } \in R^*(m)$,
	where we denote $\hat{q}_{t,x} = \hat{q} \left( t,x \right)$ for convenience.
\end{proposition}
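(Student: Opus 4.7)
The plan is to construct $\hat{q}$ by a two-stage disintegration---first conditioning on the curve, then on the time-space point---and to verify the dynamical constraint via a Lebesgue differentiation argument. Set $\eta := \pi_1 \sharp P$ and disintegrate $P(d\gamma, d\mu) = \eta(d\gamma) \otimes Q_\gamma(d\mu)$ with respect to its first marginal. By Proposition~\ref{properties_PR_1}(3), for $\eta$-a.e.\ $\gamma$ and $Q_\gamma$-a.e.\ $\mu$, the identity $\dot\gamma(t) = \int_{\R^n} f(\gamma(t), u, m(t)) \mu_t(du)$ holds for $\LL_{[0,T]}$-a.e.\ $t$. Define the averaged per-curve strategy $\bar\mu^\gamma := \int_{\PP_U^R} \mu\, Q_\gamma(d\mu)\in\PP_U^R$, understood in the weak sense via pairing with bounded test functions; Jensen's inequality preserves the $q$-moment bound, and by linearity of $f$ and $L$ in $\mu_t$ the pair $(\gamma, \bar\mu^\gamma)$ still satisfies the ODE while $J^m(\gamma, \bar\mu^\gamma) = \int J^m(\gamma, \mu) Q_\gamma(d\mu)$ for $\eta$-a.e.\ $\gamma$.

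Next, introduce the auxiliary measure
$$\Lambda := \int_0^T \int_{\Gamma_T^R} \delta_t \otimes \delta_{\gamma(t)} \otimes \bar\mu^\gamma_t \, \eta(d\gamma) \, dt$$
on $[0,T] \times \T^d \times \R^n$. Its marginal on $[0,T] \times \T^d$ is $\LL_{[0,T]} \otimes m(t)(dx)$, and disintegrating $\Lambda$ with respect to this marginal produces the measurable family $\hat{q}_{t,x} \in \PP(\R^n)$ and thus the candidate $\hat{P}$. The cost identity
$$J(m, \hat{P}) = \int L(x, u, m(t)) \Lambda(dt, dx, du) = \int_{\Gamma_T^R} J^m(\gamma, \bar\mu^\gamma) \eta(d\gamma) = J(m, P)$$
then follows by Fubini and $e_t \sharp \eta = m(t)$. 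It remains to verify $\hat{P} \in \PP_R(m)$, which by Proposition~\ref{properties_PR_1}(3) reduces to the pointwise identity
$$\dot\gamma(t) = \int_{\R^n} f(\gamma(t), u, m(t)) \hat{q}_{t, \gamma(t)}(du)$$
for $\eta$-a.e.\ $\gamma$ and $\LL_{[0,T]}$-a.e.\ $t$.

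The crux, and main obstacle, is a Markov-type property for $\eta$: for $(\LL_{[0,T]} \otimes \eta)$-a.e.\ $(t, \gamma)$, $\eta$-a.e.\ curve $\gamma'$ with $\gamma'(t) = \gamma(t)$ satisfies $\dot\gamma'(t) = \dot\gamma(t)$. The key observation is that for any two absolutely continuous curves $\gamma_1, \gamma_2$, working in a local lift to $\R^d$, the difference is AC and hence differentiable a.e.; at every Lebesgue density point of its zero set where the difference is differentiable, its derivative vanishes, so $\dot\gamma_1(t) = \dot\gamma_2(t)$. By the Lebesgue density theorem this holds at $\LL_{[0,T]}$-a.e.\ $t$ in the coincidence set, and consequently
$$A := \left\{ (t, \gamma_1, \gamma_2) \in [0,T] \times \Gamma_T^R \times \Gamma_T^R \,:\, \gamma_1(t) = \gamma_2(t),\; \dot\gamma_1(t) \neq \dot\gamma_2(t) \right\}$$
is null under $\LL_{[0,T]} \otimes \eta \otimes \eta$. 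A Fubini argument combined with the disintegration $\eta = \int \eta^{t,x}\, m(t)(dx)$ then yields the Markov-type property. Since $\int f(x, u, m(t)) \hat{q}_{t,x}(du) = \int \dot\gamma'(t) \eta^{t,x}(d\gamma')$ by construction of $\hat{q}$, we conclude that $\int f(\gamma(t), u, m(t)) \hat{q}_{t, \gamma(t)}(du) = \dot\gamma(t)$ for $\eta$-a.e.\ $\gamma$ and $\LL_{[0,T]}$-a.e.\ $t$, so $\hat{P} \in R^*(m)$. The delicate point lies in correctly juggling the three disintegrations---$Q_\gamma$ (strategies given curves), $\hat{q}_{t,x}$ (controls given time-space), and $\eta^{t,x}$ (curves given time-space)---and applying Fubini at the correct stages so that the pointwise ODE lifts to the $\eta$-a.e.\ statement needed for membership in $\PP_R(m)$.
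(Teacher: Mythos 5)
Your construction of $\hat{q}$ coincides with the paper's: the occupation measure $\Lambda$ is (up to the factor $T$) exactly the measure $\eta$ the paper disintegrates against $\frac1T\LL_{[0,T]}\otimes m(t)(dx)$, and the cost identity $J(m,\hat P)=J(m,P)$ is computed the same way. The divergence is in how membership of $\hat P$ in $\PP_R(m)$ is verified: the paper checks the weak integral condition \eqref{new_con_1} directly by pushing the test integral through the occupation measure (and separately establishes the per-curve $q$-moment bound via a Vitali--Carath\'eodory argument), whereas you reduce to the pointwise ODE and try to prove a Markov-type property of $\eta=\pi_1\sharp P$. That reduction is where your argument breaks.

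The gap is the deduction of the Markov-type property from the nullity of the set $A$ under $\LL_{[0,T]}\otimes\eta\otimes\eta$. What you actually need is that $A$ is null under the \emph{coupled} measure $\int_0^T\int_{\T^d}\eta^{t,x}\otimes\eta^{t,x}\,m(t)(dx)\,dt$, which is concentrated on the coincidence set $\{\gamma_1(t)=\gamma_2(t)\}$; but whenever $m(t)$ is non-atomic that coincidence set is $\eta\otimes\eta$-null for a.e.\ $t$, so the product-measure statement carries no information about the conditionals $\eta^{t,x}$ (your Fubini step only sees the diagonal $\{x_1=x_2\}$, which is $m(t)\otimes m(t)$-null). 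Worse, the Markov-type property is simply false for a general $\eta\in\PP_0(\Gamma_T^R)$: on $\T^1$ put half the mass on the family $\gamma_a(t)=a+t$ and half on $\sigma_a(t)=a-t$, $a\in[0,1)$, with $m_0$ uniform. Any fixed pair crosses only at a Lebesgue-null set of times, so your set $A$ is product-null, yet for \emph{every} $(t,x)$ the conditional $\eta^{t,x}$ charges one curve of each family, with velocities $+1$ and $-1$. So the pointwise identity $\dot\gamma(t)=\int_{\R^n}f(\gamma(t),u,m(t))\,\hat q_{t,\gamma(t)}(du)$ cannot be obtained along this route; any correct argument must use more than the bare structure of $\eta$ (the paper's proof instead manipulates the constraint in its integrated form). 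A secondary omission: you never verify that $\LL_{[0,T]}\otimes\hat q_{t,\gamma(t)}$ has $q$-moment at most $R$ for $\pi_1\sharp P$-a.e.\ $\gamma$, which is needed for $\hat P$ to be supported in $\Gamma_T^R\times\PP_U^R$; Jensen controls $\bar\mu^\gamma$, but after re-disintegrating in $(t,x)$ only the \emph{average} over $\gamma$ of $\int_0^T\int_{\R^n}|u|^q\hat q_{t,\gamma(t)}(du)\,dt$ is bounded by $R$, and the paper devotes a separate density/Vitali--Carath\'eodory argument in Appendix \ref{Completion of the Proof of Proposition 3.7_1_1} to upgrading this to a per-curve bound.
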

\begin{proof}
Define a measure $\eta$ on $[0,T] \times \T^d \times \R^n$ by
$$
\eta\left( C \right) := \frac{1}{T} \int_{\Gamma^R_T \times \PP^R_U} \int_0^T \int_{\R^n} \mathbf{1}_C \left( t, \gamma(t), u \right) \mu \left( dt, du \right) P \left( d \gamma, d \mu \right), \quad \forall C \in \mathcal{B} \left( [0,T] \times \T^d \times \R^n \right).
$$
Define $\hat{\eta}_t := e_t \sharp \left( \pi_1 \sharp P \right)$, which is a probability measure on $\T^d$. 
Define $\eta_{1,2} := \frac{1}{T} \LL_{[0,T]} \otimes \hat{\eta}_t (dx)$, which is a probability measure on $[0,T] \times \T^d $. By Theorem \ref{disintegration_thm}, construct the map $\hat{q}$ by
$$
\eta \left( dt,dx,du \right) = \eta_{1,2} (dt,dx) \otimes \hat{q}_{t,x} (du).
$$ 
Define the measure $\hat{P}$ by $\hat{P} := \pi_1 \sharp P \otimes \delta_{ \left\{ \LL_{[0,T]} \otimes \hat{q}_{t,\gamma(t)}\right\} } \in \PP \left( \Gamma^R_T \times \PP^R_U \right)$.
Then, we obtain that $\hat{P} \in \PP_R(m)$ by Appendix \ref{Completion of the Proof of Proposition 3.7_1_1}.
Furthermore, 
\begin{equation*}
\begin{aligned}
		J(m, \hat{P}) 
		& = \int_{\Gamma^R_T } \int_0^T \int_{\R^n} L \left( \gamma(t), u, m(t) \right) \hat{q}_{t,\gamma(t)} \left(d u \right)\, dt \,\pi_1 \sharp P \left( d \gamma \right) \\
		& = \int_0^T \int_{\T^d} \int_{\R^n} L \left( x,u,m(t) \right) \hat{q}_{t,x} (du)\,e_t \sharp \left( \pi_1 \sharp P \right) (dx)\, dt\\
%		&=T \int_0^T \int_{\T^d}  \int_{\R^n} L \left( x,u,m(t) \right) \hat{q}_{t,x} (du)\, \eta_{1,2} (dt,dx)\\
		&= T\int_0^T \int_{\T^d} \int_{\R^n} L \left( x,u,m(t) \right) \eta \left( dt,dx,du \right)\\
		&=  \int_{\Gamma^R_T \times \PP^R_U} \int_0^T \int_{\R^n} L \left( \gamma(t), u, m(t) \right) \mu (dt,du) P \left( d \gamma, d \mu \right) = J(m,P).
	\end{aligned}
\end{equation*}
Therefore, we have $\hat{P} \in R^*(m)$.

\end{proof}

%\medskip

%proof of main result 2
\begin{proof}[Proof of Theorem \ref{relation_with_convex_case}]
(1) Define the map $\hat{q} : [0,T] \times \T^d \rightarrow \PP \left( \R^n \right)$ by Proposition \ref{3.7_1_1}. By the convexity condition, for any $(t, x) \in [0,T] \times \T^d$, we have
$$
\int_{\R^n} \left( L \left( x,u,m(t) \right),\vert u \vert^q, f\left( x,u,m(t) \right) \right) \hat{q}_{t,x} (du) \in \mathcal{L} \left(t,x \right).
$$
By \cite[Theorem A.9]{28_A9}, there exist negative measurable functions $z_1: [0,T] \times \T^d \rightarrow \R^{-}, z_2: [0,T] \times \T^d \rightarrow \R^{-}$ and a measurable map $\hat{\alpha}: [0,T] \times \T^d \rightarrow \R^n$ such that 
$$
L \left( x, \hat{\alpha}(t,x),m(t) \right) = \int_{\R^n} L \left( x,u,m(t) \right) \hat{q} _{t,x} (du) +z_1(t, x),
$$
$$
\left\vert \hat{\alpha}(t,x) \right\vert^q = \int_{\R^n} \vert u \vert^q \hat{q} _{t,x} (du) +z_2(t, x),
$$
$$
f \left( x, \hat{\alpha}(t,x),m(t) \right) = \int_{\R^n} f \left( x,u,m(t) \right) \hat{q} _{t,x} (du).
$$
Define the probability measure $P_0 : = \pi_1 \sharp P \otimes\delta_{ \left\{\LL_{[0,T]} \otimes \delta_{\hat{\alpha} \left( t,\gamma(t) \right)}\right\} } \in \PP \left( \Gamma_T^R \times \PP_{U}^{R} \right)$. The measure $P_0$ is well-defined since for any $\gamma \in \supp \left( \pi_1 \sharp P \right)$, we have
\begin{align*}
	\int_0^T \left\vert \hat{\alpha}(t,\gamma(t)) \right\vert^q dt
	& = \int_0^T \int_{\R^n} \vert u \vert^q \hat{q} _{t,\gamma(t)} (du) dt + \int_0^T z_2(t, \gamma(t))dt \\
	& \leq \int_0^T \int_{\R^n} \vert u \vert^q \hat{q} _{t,\gamma(t)} (du) dt \leq R.
\end{align*}

It is clear that $e_0 \sharp \left( \pi_1 \sharp P_0 \right) = e_0 \sharp \left( \pi_1 \sharp P \right) = m_0$ and $P_0 \in \PP_R(m)$.
%For any $(t,v) \in [0,T] \times \R^d$ and any open set $N \subset \Gamma_T^R \times \PP_{U}^{R}$, we have
%\begin{align*}
%	&  \int_{N} \left \langle v, \gamma(t) - \gamma(0) - \int_0^t \int_{\R^n} f \left( \gamma(s), u,m(s) \right) \mu \left( ds,du \right) \right \rangle P_0 \left( d\gamma, d \mu \right)  \\
%	= & \int_{\pi_1(N)} \left \langle v, \gamma(t) - \gamma(0) - \int_0^t f \left( \gamma(s), \hat{\alpha} \left( s,\gamma(s) \right),m(s) \right) ds \right \rangle \pi_1 \sharp P \left( d \gamma \right) \\
%	= & \int_{\pi_1(N)} \left \langle v, \gamma(t) - \gamma(0) - \int_0^t \int_{\R^n} f \left( \gamma(s), u ,m(s) \right) \hat{q}_{s, \gamma(s)}(du) ds \right\rangle \pi_1 \sharp P \left( d \gamma \right) = 0.
%\end{align*}
%Therefore, we conclude that $P_0 \in \PP_R(m)$.
Besides,
\begin{align*}
	J \left( m, P_0 \right) 
	&= \int_{\Gamma^R_T } \int_0^T \int_{\R^n} L \left( \gamma(t), u, m(t) \right) \delta_{\hat{\alpha} \left( t, \gamma(t) \right)} \left(du \right)\, dt \, \pi_1 \sharp P \left( d \gamma \right)\\
	&= \int_{\Gamma^R_T } \int_0^T L \left( \gamma(t), \hat{\alpha} \left( t, \gamma(t) \right), m(t)  \right)\, dt \, \pi_1 \sharp P \left( d \gamma \right)\\
	&= \int_{\Gamma^R_T } \int_0^T \int_{\R^n} L \left( \gamma(t), u, m(t) \right) \hat{q} _{t,\gamma(t)} (du) \, dt \, \pi_1 \sharp P \left( d \gamma \right) + \int_{\Gamma^R_T} \int_0^T z_1 \left( t, \gamma (t) \right) dt\, \pi_1 \sharp P \left( d \gamma \right)\\
	& \leq J \left( m, \hat{P} \right) = J \left( m, P \right).
\end{align*}
In conclusion, $P_0 \in R^*(m)$.

Since $P_0 \in \PP_R (m)$, for any $\gamma \in \supp \left( \pi_1 \sharp P \right)$, it satisfies
\begin{equation} \label{sol for alpha}
\dot{\gamma} (t) = f \left( \gamma(t), \hat{\alpha} \left(t, \gamma(t) \right), m(t) \right).
\end{equation}
Denote by $\gamma^{\hat{\alpha}}_x$ the solution of \eqref{sol for alpha} with $\gamma^{\hat{\alpha}}_x(0) = x$. Define the maps $u^*_x(t):= \hat{\alpha} \left(t, \gamma^{\hat{\alpha}}_x(t) \right)$ for any $x \in \T^d$.
Then, by $e_0 \sharp \pi_1 \sharp P = m_0$, it is clear that
$$
J(m,P_0) = \int_{\T^d} \int_0^T L \left( \gamma^{u^*_x}_x (t), u^*_x(t), m(t) \right) dt\, m_0(dx),
$$
where
$$
\dot{\gamma}^{u^*_x}_x (t) = f \left( \gamma^{u^*_x}_x(t), u_x^*(t), m(t) \right), \quad \gamma^{u^*_x}_x(0)=x.
$$
By Lemma \ref{pre1}, for any $x \in \T^d$ and any map $u_x:[0,T] \rightarrow \R^n$ such that $\int_0^T \left \vert u_x(t) \right\vert^q dt \leq R$, we have $P^u = \eta^{u} \otimes \delta_{\left\{\LL_{[0,T]} \otimes \delta_{u_{\gamma(0)}(t)} \right\} } \in \PP_R(m)$. Thus,
$$
\int_{\T^d} \int^T_0 L \left( \gamma^{u_x}_x (t) , u_x(t) ,m(t) \right) d t\, m_0(dx) \geq \int_{\T^d} \int^T_0 L \left( \gamma^{u^*_x}_x (t) , u^*_x(t) ,m(t) \right) d t\, m_0(dx).
$$

%\medskip

(2) By Theorem \ref{existence_1}, there exists a relaxed MFC equilibrium $P \in R^* (m)$, where $m(t)= e_t \sharp \left( \pi_1 \sharp P \right)$ for any $t \in [0,T] $.
As a consequence of the point (1) of this theorem, $P_0 \in R^* \left( m\right)$.
Define $m^0(t) :=  e_t \sharp \left( \pi_1 \sharp P_0 \right)$ for any $t \in [0,T]$.
By the definition of $P_0$, we have $m(t) = m^0(t)$ for any $t \in [0,T]$.
Thus, $P_0 \in R^*\left(m^0 \right)$, which means $P_0$ is also a relaxed MFC equilibrium, and it is strict obviously.
\end{proof}

\appendix
%\section{Appendix} \label{appendix}
\section{Notations} \label{assumtions and definitions}
We list some notations that are used in this paper as follows.
\begin{itemize}
	\item For any set $A$, denote by $\mathcal{B} (A)$ the family of Borel subsets of $A$, by $\PP (A)$ the family of Borel probability measures on $A$, by $\MMM (A)$ the family of Borel measures on $A$. 
	The support of a measure $P \in \MMM (A)$, denoted by $\supp \left( P \right)$, is a closed set defined by
	$$
	\supp \left( P \right) := \left\{ x \in A \, \middle\vert \,\, P (V_x) > 0 \,\, \text{for every open neighborhood} \,\, V_x \,\, \text{of}\,\,  x \right\}.
	$$

	\item Denote by $\R$ the set of real numbers, by $\R^-$ the set of negative real numbers, by $\Z$ the set of integers, by $\N$ the set of positive integers. For any $n \in \N$, denote by $\R^n$ the $n$-dimensional real Euclidean space, by $\langle \cdot, \cdot \rangle$ the Euclidean scalar product in $\R^n$, by $\vert \cdot \vert$ the usual norm in $\R^n$.
	Denote by $\T^n = \R^n / \Z^n$ the standard $n$-dimensional flat torus. Let $\PP \left( \T^d \right)$ be endowed with the narrowly convergence. It is convenient to put a metric, i.e., the Kantorovich-Rubinstein distance $d_1$ on $\PP \left( \T^d \right)$.

	\item Fix a constant $T>0$. For any absolutely continuous map $\gamma: [0,T] \rightarrow \R^d$, denote by $\Vert \gamma \Vert_{\infty}$ the uniform norm of the map $\gamma$, i.e.,
	$$
	\Vert \gamma \Vert_{\infty} = \sup_{t \in [0,T]} \vert \gamma(t) \vert.
	$$
	Denote by $\mathcal{L}_{[0,T]}$ the Lebesgue measure on $[0,T]$. For any constant $1 \leq \alpha <+ \infty$, denote by $\left\Vert \gamma \right\Vert_{L_\alpha \left( \mathcal{L}_{[0,T]} \right)}$ the $L_\alpha$ norm of the map $\gamma$ with respect to the measure $\mathcal{L}_{[0,T]}$, i.e.,
	$$
	\left\Vert \gamma \right\Vert_{L_\alpha \left( \mathcal{L}_{[0,T]} \right)} = \left(\int_0^T \left\vert \gamma(t) \right\vert^\alpha dt \right)^{\frac{1}{\alpha}}.
	$$
	
	\item Let $\Gamma_T$ be a set of absolutely continuous maps $\gamma: [0,T] \rightarrow \T^d$.  For any $t \in [0,T]$, denote by $e_t$ the evaluation map, i.e.,
	$$
	e_t(\gamma) = \gamma(t), \quad \forall\; \gamma \in \Gamma_T.
	$$
	
	\item The function $\omega: \R^+ \rightarrow \R^+$ is a modulus function if it is a nondecreasing and upper semi-continuous function such that $\displaystyle{\lim_{r \rightarrow 0+}} \omega(r) =0$.

	\item Let A be a set. For any subset $B \subset A$, denote by $\mathbf{1}_B: A \rightarrow\{0,1\}$ the indicator function of $B$, i.e.,
	$$
	\mathbf{1}_B(x) = 
	\begin{cases}
		1 & x \in B, \\ 
		0 & x \notin B .
	\end{cases}
	$$
	Let $x \in A$. Denote by $\delta_{\left\{x\right\}}$ the Dirac mass at point $x$.

	\item For any sets $N_1,N_2$ and any $P \in \PP \left( N_1 \times N_2 \right)$, denote by $\pi_1: N_1 \times N_2 \rightarrow N_1$ the canonical projection of the first variable. Define the push-forward measure $\pi_1 \sharp P \in \PP (N_1)$ by
	$$
	\pi_1 \sharp P \left( B \right) : = P \left( \pi_1^{-1} \left(B \right) \right), \quad \forall B \in \mathcal{B}\left( N_1 \right).
	$$
	Let a probability measure $\eta \in \PP(N_1)$ and a family of probability measures $\left(\nu_x \right)_{x \in N_1} \subset \PP(N_2)$. Denote $P = \eta \otimes \nu_x$ if these measures satisfy
	$$
	\int_{N_1 \times N_2} g(x,y) P(dx,dy) = \int_{N_1} \int_{N_2} g(x,y) \nu_x(dy) \eta(dx), \quad \forall\,\, \text{Borel map}\,\, g: N_1 \times N_2 \rightarrow [0, +\infty].
	$$
	Similarly, let a probability measure $\nu \in \PP(N_2)$ and a family of probability measures $\left(\eta_y \right)_{y \in N_2} \subset \PP(N_1)$. Denote $P = \eta_y \otimes \nu$ if these measures satisfy
	$$
	\int_{N_1 \times N_2} g(x,y) P(dx,dy) = \int_{N_2} \int_{N_1} g(x,y) \eta_y(dx) \nu(dy), \quad \forall\,\, \text{Borel map}\,\, g: N_1 \times N_2 \rightarrow [0,+\infty].
	$$

	\item For metric spaces $\left( [0,t], d_A \right)$ and $\left( B, d_B \right)$ with $t>0$, denote by $AC \left( [0,t], B \right)$ the space of absolutely continuous maps from $[0,t]$ to $B$. 
	For metric spaces $\left( A, d_A \right)$ and $\left( B, d_B \right)$, denote by $\CC \left( A, B \right)$ the space of continuous maps from $A$ to $B$. Denote by $\CC \left( A\right)$ the space of continuous functions on $A$. Denote by $\CC_b \left( A \right)$ the space of bounded and continuous functions on $A$. Denote by $\CC^k \left( A \right)$ the space of $k$-times continuously differentiable functions on $A$.
\end{itemize}

\section{Measure theory} \label{measure_theory}
In this section, we recall some results from measure theory that will be useful in this paper.
Throughout this section, the space $\left( M, \rho \right)$ is a separable metric space.

%\medskip

For a sequence $\left\{ \mu_n \right\}_{n =1}^{\infty} \subset \PP (M)$, $\mu_n$ narrowly converges to some $\mu \in \PP(M)$ as $n \rightarrow \infty$ if 
$$
\lim_{n \rightarrow \infty} \int_M f(x) \mu_n (dx) = \int_M f(x) \mu (dx), \quad \forall f \in \CC_b \left( M \right).
$$

For each $p \in [1, +\infty )$, the Wasserstein space of order $p$ is defined by
$$
\PP_p (M) := \left\{ \mu \in \PP(M)\, \middle \vert \,\, \int_M \rho^p \left( x_0 , x \right)  \mu (d x) < +\infty \right\},
$$
where $x_0 \in M$ is an arbitrary point.
The Monge-Kantorovich distance on $\PP_p(M)$ is defined by
$$
d_p\left(\mu_1, \mu_2 \right) :=\inf _{\eta \in \Pi(\mu_1, \mu_2)} \left( \int_{M^2} \rho^p(x, y)  \eta \left( d x, dy\right)\right)^{1 / p}, \quad \forall \mu_1,\mu_2 \in \PP_p(M),
$$
where $\Pi(\mu_1, \mu_2)$ is the set of Borel probability measures on $M^2$ such that $\eta \left(A \times M\right)=\mu_1(A)$ and $\eta \left(M \times A\right)=\mu_2(A)$ for any Borel set $A \in \mathcal{B}(M)$.

As for the distance $d_1$, which is often called Kantorovich-Rubinstein distance, can be characterized by a useful duality formula (see, for instance, \cite{2013}) as
$$
d_1 (\mu_1, \mu_2) = \sup \left\{ \int_M g(x) \mu_1(d x) - \int_M g(x)  \mu_2(dx) \right\}, \quad \forall \mu_1, \mu_2 \in \PP_1 (M), 
$$
where the supremum is taken over all 1-Lipschitz functions $g: M \rightarrow \R$.

\medskip
\medskip

We now recall the relations between the narrowly convergence and the $d_p$-convergence. See \cite[Theorem 7.1.5]{MR2401600} and \cite[Theorem 7.12]{MR1964483} for examples.
\begin{proposition}
	If a sequence of measures $\left\{ \mu_i \right\}_{i =1}^{\infty} \subset \PP_p (M)$ converges to some $\mu \in \PP_p (M)$ in $d_p$-topology, then $\mu_n$ converges to $\mu$ narrowly.
	Conversely, if $\bigcup_{i=1}^\infty \supp \{\mu_i\}$ is contained in a compact subset of $M$ and $\mu_i$ converges to $\mu$ narrowly, then $\mu_i$ converges to $\mu$ in $d_p$-topology.
\end{proposition}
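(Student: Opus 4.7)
The plan is to handle the two implications separately, using the Kantorovich--Rubinstein duality recalled just above the statement as the main bridge between the Wasserstein distances and integration against test functions.

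\emph{Forward direction.} For $p \geq 1$ and any coupling $\eta \in \Pi(\mu_i,\mu)$, Jensen's inequality applied to the convex map $t \mapsto t^p$ gives $\int \rho\, d\eta \leq (\int \rho^p\, d\eta)^{1/p}$, and taking infima over couplings yields $d_1(\mu_i,\mu) \leq d_p(\mu_i,\mu)$. Given any bounded Lipschitz $f: M \to \R$ with Lipschitz constant $L$, applying the duality formula to $f/L$ gives
\[
\left| \int_M f\, d\mu_i - \int_M f\, d\mu \right| \leq L\, d_1(\mu_i,\mu) \leq L\, d_p(\mu_i,\mu) \longrightarrow 0.
\]
Since on a separable metric space narrow convergence is characterized by convergence of integrals against all bounded Lipschitz test functions (a standard consequence of the Portmanteau theorem), this proves $\mu_i \to \mu$ narrowly.

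\emph{Converse.} Let $K \subset M$ be a compact set containing $\bigcup_{i} \supp\{\mu_i\} \cup \supp\{\mu\}$, and set $D := \sup_{x,y \in K}\rho(x,y) < +\infty$. Every coupling $\eta \in \Pi(\mu_i,\mu)$ is concentrated on $K \times K$, so $\rho(x,y) \leq D$ for $\eta$-a.e. $(x,y)$; hence $\int \rho^p\, d\eta \leq D^{p-1}\int \rho\, d\eta$, and taking infima gives
\[
d_p^p(\mu_i,\mu) \leq D^{p-1}\, d_1(\mu_i,\mu).
\]
Thus it suffices to show $d_1(\mu_i,\mu) \to 0$. Fixing $x_0 \in K$ and setting $\mathcal{G} := \{ g: K \to \R \mid g \text{ is } 1\text{-Lipschitz},\, g(x_0) = 0 \}$, the duality formula reduces the task to proving
\[
\sup_{g \in \mathcal{G}} \left| \int_K g\, d\mu_i - \int_K g\, d\mu \right| \longrightarrow 0.
\]
Since $K$ is compact, $\mathcal{G}$ is uniformly bounded by $D$ and equicontinuous, so by the Arzelà--Ascoli theorem it is relatively compact in $C(K)$. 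Narrow convergence provides pointwise (in $g$) convergence of the linear functionals $g \mapsto \int g\, d\mu_i$, and the relative compactness of $\mathcal{G}$ in $C(K)$ upgrades this to uniform convergence over $\mathcal{G}$.

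The main (and essentially only) subtle step is the passage from pointwise to uniform convergence of the integration functionals on the compact test class $\mathcal{G}$; an alternative route avoids it by exploiting tightness of $\{\mu_i\}$ together with Skorokhod's representation to extract couplings on which $\int \rho^p\, d\eta_i \to 0$ directly. Either route uses only the stated hypotheses and the separability of $M$ assumed throughout the appendix.
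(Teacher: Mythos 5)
Your proof is correct, but note that the paper itself does not prove this proposition: it is stated as a known fact with pointers to \cite[Theorem 7.1.5]{MR2401600} and \cite[Theorem 7.12]{MR1964483}, so the comparison is with the standard literature rather than with an in-paper argument. Your forward direction (the inequality $d_1 \leq d_p$ via Jensen, then Kantorovich--Rubinstein duality and the bounded-Lipschitz characterization of narrow convergence) is exactly the classical route. For the converse, your direct argument --- $d_p^p(\mu_i,\mu) \leq D^{p-1} d_1(\mu_i,\mu)$ on a compact set, then uniform convergence of the functionals $g \mapsto \int g\, d\mu_i - \int g\, d\mu$ over the Arzel\`a--Ascoli-compact class of normalized $1$-Lipschitz functions --- is sound, though within this paper there is a shorter route you could have invoked: since all supports lie in a compact set $K$, the uniform-integrability condition (2) of Proposition \ref{D-L A.1} holds trivially (the integrand vanishes as soon as $r > \sup_{x,y\in K}\rho^p(x,y)$), so narrow convergence upgrades to $d_p$-convergence by the implication $(2)\Rightarrow(1)$ there; your Skorokhod alternative (bounded $\rho^p(X_i,X) \to 0$ a.s., dominated convergence, and the joint law as a coupling) is a third valid variant. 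One small step you assert without justification: you choose $K$ to contain $\supp\{\mu\}$ as well, whereas the hypothesis only controls $\bigcup_i \supp\{\mu_i\}$. This needs a line: if $K_0$ is a compact set with $\mu_i(K_0)=1$ for all $i$, the Portmanteau theorem gives $\mu(K_0) \geq \limsup_i \mu_i(K_0) = 1$, hence $\supp\{\mu\} \subset K_0$, and your $K$ exists; without this, the claim that every coupling is concentrated on $K \times K$ (and the restriction of the duality to functions on $K$) is unjustified. With that line added, your proof is complete.
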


\begin{proposition} \label{D-L A.1}
	Let $\mu\in \PP_p (M)$ and $\left\{ \mu_i\right\}_{i=1}^\infty \subset \PP_p (M)$. The following statements are equivalent.
	\begin{enumerate}[(1)]
		\item $d_p \left( \mu_i, \mu \right) \rightarrow 0$.
		
		\item $\mu_i$ converges to $\mu$ narrowly. There exists $x_0 \in M$ such that
		$$
		\lim_{r \rightarrow +\infty} \sup_{i \in \N} \int_{\left\{ x \,\middle\vert \, \rho^p \left( x, x_0 \right) \geq r \right\}} \rho^p \left(x, x_0 \right) \mu_i (dx)  = 0.
		$$
		
		\item For each continuous function $f: M \rightarrow \R$ satisfies that there exist $x_0 \in M$ and $c>0$ such that 
		$$
		\left\vert f(x) \right\vert \leq c \left( 1+ \rho^p \left(x,x_0 \right) \right), \quad \forall x \in M,
		$$
		we have 
		$$
		\lim_{i \rightarrow \infty} \int_M f(x) \mu_i(dx) = \int_M f(x) \mu(dx).
		$$
	\end{enumerate}
\end{proposition}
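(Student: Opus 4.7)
The plan is to prove the three implications cyclically: (1)$\Rightarrow$(3)$\Rightarrow$(2)$\Rightarrow$(1). The first proposition of Appendix \ref{measure_theory}, giving narrow convergence from $d_p$-convergence, will be used as a black box, and the main analytic content is an interplay between narrow convergence and uniform control of the $p$-moment tails.

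For (1)$\Rightarrow$(3): Pick optimal couplings $\eta_i \in \Pi(\mu_i,\mu)$, so that $\int_{M^2}\rho^p(x,y)\,\eta_i(dx,dy)=d_p^p(\mu_i,\mu)\to 0$. For any continuous $f$ with $|f(x)|\le c(1+\rho^p(x,x_0))$ write
\[
\int_M f\,d\mu_i - \int_M f\,d\mu = \int_{M^2}\bigl(f(x)-f(y)\bigr)\,\eta_i(dx,dy),
\]
split the domain into $\{\rho(x,y)\le\varepsilon\}$ and its complement, and use uniform continuity of $f$ on compacts together with the growth bound and the Markov-type estimate $\eta_i\{\rho(x,y)>\varepsilon\}\le\varepsilon^{-p}d_p^p(\mu_i,\mu)$. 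The tail is controlled using $\rho^p(x,x_0)\le 2^{p-1}(\rho^p(y,x_0)+\rho^p(x,y))$, which keeps the $p$-moments of the $\mu_i$ bounded uniformly.

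For (3)$\Rightarrow$(2): Applying (3) to every $f\in C_b(M)$ yields narrow convergence. Applying (3) to $f(x)=\rho^p(x,x_0)$ (allowed by the growth condition) gives convergence of the total $p$-moments. To deduce the uniform tail integrability, I would pick a cut-off $\chi_R\in C_b(\R)$ with $\mathbf 1_{[0,R-1]}\le\chi_R\le\mathbf 1_{[0,R]}$ and apply (3) to the two continuous functions $x\mapsto\rho^p(x,x_0)$ and $x\mapsto\rho^p(x,x_0)\chi_R(\rho(x,x_0))$; subtracting and taking $R\to\infty$ after $i\to\infty$ forces $\limsup_i \int_{\{\rho^p(\cdot,x_0)\ge R\}}\rho^p(\cdot,x_0)\,d\mu_i\to 0$.

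For (2)$\Rightarrow$(1): Here I would invoke the Skorokhod representation theorem on the separable metric space $M$ to produce random variables $X_i,X$ defined on a common probability space with laws $\mu_i,\mu$ such that $X_i\to X$ almost surely. Then $\rho^p(X_i,X)\to 0$ a.s., and by the elementary inequality $\rho^p(X_i,X)\le 2^{p-1}(\rho^p(X_i,x_0)+\rho^p(X,x_0))$ the sequence $\rho^p(X_i,X)$ is dominated by a uniformly integrable family (the $X_i$-terms are uniformly integrable by assumption (2), and the $X$-term is a single integrable random variable). Vitali's convergence theorem then yields $\mathbb{E}[\rho^p(X_i,X)]\to 0$, and since this expectation upper bounds $d_p^p(\mu_i,\mu)$, we conclude $d_p(\mu_i,\mu)\to 0$.

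The main obstacle I expect is the implication (2)$\Rightarrow$(1): one cannot just test against a distinguished function, and a coupling must be built. Skorokhod representation is the cleanest device; if $M$ is not assumed Polish but only separable metric one would instead work directly with optimal transport plans, extract a tight subsequence by the uniform-moment assumption, and show any narrow limit must be supported on the diagonal using the uniform integrability to upgrade narrow convergence to $p$-Wasserstein convergence along the subsequence.
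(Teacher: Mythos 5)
The paper does not actually prove Proposition \ref{D-L A.1}: it is recalled in Appendix \ref{measure_theory} as a classical result (the standard characterization of $d_p$-convergence, cf.\ Villani's Theorem 7.12 and Ambrosio--Gigli--Savar\'e's Theorem 7.1.5, which the paper cites for the neighboring proposition), so there is no internal proof to compare against. Your cyclic argument (1)$\Rightarrow$(3)$\Rightarrow$(2)$\Rightarrow$(1) is precisely the standard textbook route, and it is essentially correct; in particular the step (2)$\Rightarrow$(1) via Skorokhod representation plus Vitali is clean and sits well with this paper, which states Skorokhod's theorem itself as Theorem \ref{Skorokhod Representation Theorem}, and your closing remark about replacing Skorokhod by tight optimal plans when $M$ is merely separable is the right caveat.

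Two spots in the sketch need tightening before this is a complete proof. First, in (1)$\Rightarrow$(3) the phrase ``uniform continuity of $f$ on compacts'' does not by itself control the near-diagonal term: on $\{\rho(x,y)\le\varepsilon\}$ the point $x$ ranges over the $\varepsilon$-neighborhood of wherever $y$ lives, and such a neighborhood of a compact set need not be compact. The fix is standard but must be said: use tightness of the single measure $\mu$ to confine $y$ to a compact $K$ up to small $\mu$-mass, and then show by a sequential compactness argument that $\sup\{|f(x)-f(y)|: y\in K,\ \rho(x,y)\le\varepsilon\}\to 0$ as $\varepsilon\to 0$. Likewise, on the off-diagonal set $\{\rho(x,y)>\varepsilon\}$, bounded $p$-moments of the $\mu_i$ are \emph{not} enough to kill $\int \rho^p(x,x_0)\,d\eta_i$ over a set of vanishing $\eta_i$-measure; you must route it through $\rho^p(x,x_0)\le 2^{p-1}(\rho^p(x,y)+\rho^p(y,x_0))$, where the first summand is $O(d_p^p(\mu_i,\mu))$ and the second is handled by absolute continuity of the integral of the fixed function $\rho^p(\cdot,x_0)\in L_1(\mu)$. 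Second, in (3)$\Rightarrow$(2) your cut-off argument only yields $\limsup_{i}$ of the tail integrals small, whereas the statement demands $\sup_{i\in\N}$; this is closed by the trivial remark that each of the finitely many initial measures $\mu_i$ individually has vanishing tails since $\mu_i\in\PP_p(M)$. With these routine repairs the proposal is a correct and complete proof of the cited result.
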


%\medskip

The next theorem reveals the relation between the narrowly convergence and the almost surely convergence. See, for instance, \cite[Theorem 6.7]{billingsley1999convergence}.
\begin{theorem}[Skorokhod's Representation Theorem] \label{Skorokhod Representation Theorem}
	Let $\left\{ \mu_i \right\}_{i =1}^{\infty} \subset \PP(M)$ be a sequence of probability measures such that $\mu_i$ converges to some $\mu \in \PP(M)$ narrowly. There exist $M$-valued random variables $\left\{ X_i \right\}_{i=1}^{\infty}$ and $X$, whose distributions are $\left\{ \mu_i \right\}_{i =1}^{\infty}$ and $\mu$ respectively. Then, $X_i$ converges to $X$ $\mu$-almost surely.
\end{theorem}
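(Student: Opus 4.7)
The plan is to realize all of $\{\mu_i\}_{i\geq 1}$ and $\mu$ on the common probability space $([0,1],\mathcal{B}([0,1]),\LL_{[0,1]})$ via a coupling built from nested Borel partitions of $M$ whose cells have shrinking diameter and $\mu$-null boundaries. The guiding idea is that once $X_i$ and $X$ can be forced to lie in the same cell of an arbitrarily fine partition with probability tending to one, almost sure convergence will follow by Borel--Cantelli and a mesh-shrinking argument.

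First I would build the partitions. Using separability, fix a countable dense set $\{z_k\}_{k\geq 1}\subset M$. Since $r\mapsto \mu(\partial B(z_k,r))$ is nonzero for at most countably many $r$, radii $r_k^{(n)}\in (0,1/n)$ can be chosen with $\mu(\partial B(z_k,r_k^{(n)}))=0$; the disjoint sets $A_k^{(n)}:=B(z_k,r_k^{(n)})\setminus \bigcup_{j<k}B(z_j,r_j^{(n)})$ then form a Borel partition $\mathcal{A}^{(n)}$ of $M$ with mesh at most $2/n$ whose every cell has $\mu$-null topological boundary. Taking iterated common refinements ensures $\mathcal{A}^{(n+1)}$ refines $\mathcal{A}^{(n)}$ for all $n$. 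The portmanteau theorem combined with the null-boundary property then yields $\mu_i(A)\to \mu(A)$ as $i\to\infty$ for every cell $A$ of every $\mathcal{A}^{(n)}$.

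Next I would construct the coupling. Enumerate the cells of $\mathcal{A}^{(n)}$ coherently with the refinement (children of a cell appearing contiguously) and carve $[0,1]$ into consecutive left-closed subintervals of lengths $\mu(A_j^{(n)})$, respectively $\mu_i(A_j^{(n)})$; the nesting of partitions then matches the nesting of intervals. Within each $A_j^{(n)}$ pick Borel sections from the corresponding intervals onto $A_j^{(n)}$ pushing Lebesgue measure forward to $\mu(\,\cdot\mid A_j^{(n)})$ and $\mu_i(\,\cdot\mid A_j^{(n)})$; such sections exist since each cell is a Borel subset of a Polish space and hence standard. This defines random variables $X$ and $X_i$ on $[0,1]$ with laws $\mu$ and $\mu_i$ respectively. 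For each fixed $n$, the Lebesgue measure of the event $\{X_i,X \text{ lie in different cells of } \mathcal{A}^{(n)}\}$ is bounded by $\sum_j |\mu_i(A_j^{(n)})-\mu(A_j^{(n)})|$, which tends to $0$ as $i\to\infty$. Extracting a subsequence along which these sums are summable, Borel--Cantelli provides a full-measure set on which, for every $n$, $\rho(X_i,X)\le 2/n$ holds eventually, whence $X_i\to X$ almost surely.

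The main obstacle is the bookkeeping: coordinating the nested cell enumeration with nested interval decompositions so that left-to-right filling is preserved across every refinement level simultaneously for all $i$, and promoting the pointwise convergence $\mu_i(A)\to\mu(A)$ into a single summable control suitable for Borel--Cantelli. A mild implicit assumption is that $M$ is Polish, needed so that Borel sections from intervals onto cells exist; this is harmless in the paper's applications, where $M$ is always a Polish (indeed usually compact) space.
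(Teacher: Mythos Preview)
The paper does not prove this theorem; it is stated as a classical result with a reference to Billingsley. Your partition-and-coupling architecture (nested Borel partitions with $\mu$-null boundaries, portmanteau, interval decomposition of $[0,1]$) is exactly the standard route taken in Billingsley's and Dudley's proofs, so in spirit you are aligned with the cited source.

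There is, however, a genuine gap in your final step. After constructing $X$ and all the $X_i$ on $[0,1]$ you bound $\LL_{[0,1]}\big(\{X_i,X\ \text{lie in different level-}n\ \text{cells}\}\big)\le \sum_j |\mu_i(A_j^{(n)})-\mu(A_j^{(n)})|\to 0$ and then invoke Borel--Cantelli \emph{after extracting a subsequence} along which these sums are summable. That argument yields $X_{i_k}\to X$ almost surely only along the extracted subsequence $(i_k)$; it does not give $X_i\to X$ almost surely for the full sequence, which is what the theorem claims. Knowing that every subsequence admits a further a.s.-convergent subsequence only delivers convergence in probability, not almost sure convergence. The standard repair dispenses with Borel--Cantelli entirely: the level-$n$ $\mu$-interval endpoints $e_j^{(n)}=\sum_{k\le j}\mu(A_k^{(n)})$ form a countable set, and for any $\omega\in[0,1)$ outside this set, $\omega$ lies in the interior of some $\mu$-interval with index $j$; since the corresponding $\mu_i$-endpoints converge to the $\mu$-endpoints (finite sums of cell masses), $\omega$ eventually lies in the $j$-th $\mu_i$-interval as well, forcing $X_i(\omega)$ and $X(\omega)$ into the same level-$n$ cell and hence $\rho(X_i(\omega),X(\omega))\le 2/n$. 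Letting $n\to\infty$ gives $X_i(\omega)\to X(\omega)$ for every $\omega$ outside a countable (hence Lebesgue-null) set, for the full sequence.
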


Based on Theorem \ref{Skorokhod Representation Theorem}, we can easily derive Corollary \ref{lemma of s representation theorem}.
\begin{corollary}\label{lemma of s representation theorem}
	Let $\left\{ \mu_i \right\}_{i =1}^{\infty} \subset \PP(M)$ be a sequence of probability measures such that $\mu_i$ converges to some $\mu \in \PP(M)$ narrowly. For any lower semi-continuous and bounded function $g: M \rightarrow \R$, we have
	$$
	\liminf_{i \rightarrow \infty} \int_M g(x) \mu_i(dx) \geq \int_M g(x) \mu(dx).
	$$
\end{corollary}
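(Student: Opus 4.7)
The plan is to reduce the measure-theoretic inequality to an almost-sure pointwise inequality via Skorokhod's representation theorem, and then apply Fatou's lemma. The strategy mirrors the standard Portmanteau-type argument.

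First I would invoke Theorem \ref{Skorokhod Representation Theorem} directly: since $\mu_i \to \mu$ narrowly, there exist $M$-valued random variables $\{X_i\}_{i=1}^\infty$ and $X$ on a common probability space $(\Omega, \mathcal{F}, \mathbb{P})$ whose distributions are $\mu_i$ and $\mu$ respectively, with $X_i \to X$ $\mathbb{P}$-almost surely. In particular, for any bounded Borel function $h$ on $M$,
$$
\int_M h(x)\,\mu_i(dx) = \mathbb{E}[h(X_i)], \qquad \int_M h(x)\,\mu(dx) = \mathbb{E}[h(X)].
$$

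Second, I would use the lower semi-continuity of $g$ pointwise along the almost-sure convergent sequence. At any $\omega \in \Omega$ where $X_i(\omega) \to X(\omega)$, lower semi-continuity of $g$ gives
$$
\liminf_{i \to \infty} g(X_i(\omega)) \geq g(X(\omega)),
$$
so this inequality holds $\mathbb{P}$-almost surely.

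Third, since $g$ is bounded, choose a constant $C > 0$ with $g + C \geq 0$ everywhere on $M$. Applying Fatou's lemma to the nonnegative sequence $\{g(X_i) + C\}_{i=1}^\infty$ and combining with the almost-sure inequality above,
$$
\mathbb{E}[g(X)] + C = \mathbb{E}[g(X) + C] \leq \mathbb{E}\!\left[\liminf_{i \to \infty}\bigl(g(X_i) + C\bigr)\right] \leq \liminf_{i \to \infty} \mathbb{E}[g(X_i) + C].
$$
Subtracting $C$ and translating back via the Skorokhod identification yields
$$
\int_M g(x)\,\mu(dx) \leq \liminf_{i \to \infty} \int_M g(x)\,\mu_i(dx),
$$
which is the desired inequality. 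There is no real obstacle here; the only minor care is to shift $g$ by a constant before applying Fatou, which is needed because $g$ is only bounded, not nonnegative.
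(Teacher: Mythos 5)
Your proof is correct and takes essentially the same approach as the paper, which derives the corollary from Skorokhod's representation theorem (Theorem \ref{Skorokhod Representation Theorem}) and leaves the remaining details implicit. Your explicit steps---the almost-sure lower semi-continuity inequality and the shift of $g$ by a constant so that Fatou's lemma applies to a nonnegative sequence---are exactly the details the paper omits, and they are carried out correctly.
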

%\begin{proof}
%By Theorem \ref{Skorokhod Representation Theorem}, there exist random variables $\left\{ X_i \right\}_{i=1}^{\infty}$ and $X$, which are distributions of $\left\{ \mu_i \right\}_{i =1}^{\infty}$ and $\mu$ respectively, such that $X_i$ converges to $X$ $\mu$-almost surely.
%Since $g$ is lower semi-continuous, we have $g(X) \leq \liminf_{i \rightarrow \infty} g \left( X_i \right)$.
%By Fatou's Lemma, we obtain that
%\begin{align*}
%	\int_M g(x) \mu(dx)
%	& = E \left[ g(X) \right] \leq E \left[ \liminf_{i \rightarrow \infty} g \left( X_i \right)\right] \\
%	& \leq \liminf_{i \rightarrow \infty} E \left[ g\left( X_i \right) \right] = \liminf_{i \rightarrow \infty} \int_M g(x) \mu_i (dx).
%\end{align*}
%\end{proof}
%\medskip
\medskip
Finally, we recall the disintegration theorem. See, for instance, \cite[Theorem 8.5]{2013}.
\begin{theorem} [Disintegration Theorem] \label{disintegration_thm}
	Let $X$ and $Y$ be Radon separable metric spaces. Let $\mu$ be a Borel probability measure on $X$. Let $\pi: X \rightarrow Y$ be a Borel map. Define $\nu=\pi \sharp \mu \in \mathcal{P}(Y)$. Then, there exists a $\mu$-almost everywhere uniquely determined family of Borel probability measures $\left\{\nu_y\right\}_{y \in Y} \subset \mathcal{P}(X)$ such that
	$$
	\begin{array}{cc}
		\nu_y\left(X \backslash \pi^{-1}(y)\right)=0, \quad \text { for } \mu-\text { a.e. } y \in Y, \\
		\quad \int_X g(x) \mu(d x) = \int_Y \left( \int_{\pi^{-1}(y)} g(x) \nu_y(d x)\right) \nu(d y)
	\end{array}
	$$
	for every Borel map $g: X \rightarrow[0,+\infty]$.
\end{theorem}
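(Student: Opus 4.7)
The plan is to prove uniqueness first, then construct the disintegration via the Radon--Nikodym theorem applied to a countable generating algebra, and finally upgrade the resulting finitely additive set functions to genuine Borel probability measures using the Radon hypothesis on $X$.

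For uniqueness, suppose $\{\nu_y\}$ and $\{\nu'_y\}$ are two families satisfying the conclusion. For every $g\in \CC_b(X)$ the integration formula forces $\int_X g\, d\nu_y = \int_X g\, d\nu'_y$ for $\nu$-a.e.\ $y$. Since $X$ is a separable metric space, $\CC_b(X)$ contains a countable family that separates Borel probability measures (for instance the bounded Lipschitz functions generated by a countable dense subset), so the exceptional null sets can be pooled into a single $\nu$-null set off which $\nu_y=\nu'_y$.

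For existence, observe that for each $B\in\mathcal B(X)$ the set function $A\mapsto \mu(B\cap \pi^{-1}(A))$ defines a finite Borel measure on $Y$ that is absolutely continuous with respect to $\nu=\pi\sharp\mu$. The Radon--Nikodym theorem yields a $\nu$-a.e.\ uniquely determined density $h_B\in L^1(\nu)$ taking values in $[0,1]$, with $h_X\equiv 1$ and $h_\emptyset\equiv 0$. Fix a countable algebra $\mathcal A\subset\mathcal B(X)$ generating the Borel $\sigma$-algebra. Since finite additivity corresponds to countably many pointwise identities among the $h_B$'s, outside a single $\nu$-null set the map $B\mapsto h_B(y)$ is a finitely additive $[0,1]$-valued set function on $\mathcal A$.

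The main obstacle — and the step that genuinely uses the Radon hypothesis — is upgrading this finitely additive set function to a countably additive Borel probability measure $\nu_y$ for $\nu$-a.e.\ $y$. The strategy is to exploit inner regularity: choose, for each $B\in\mathcal A$, a countable family of compact sets inner-approximating $B$ in $\mu$-measure, and use the resulting monotone continuity to force $\sigma$-additivity of $h_\cdot(y)$ on $\mathcal A$ on a further $\nu$-full set, after which Carath\'eodory's extension theorem produces the Borel probability measure $\nu_y$. Once $\nu_y$ is defined, taking $B=\pi^{-1}(A)$ for $A$ in a countable basis of $Y$ gives $h_{\pi^{-1}(A)}(y)=\mathbf 1_A(y)$ $\nu$-a.e., which after intersection over the basis yields $\nu_y(X\setminus \pi^{-1}(y))=0$ for $\nu$-a.e.\ $y$. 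The integration identity is then established first for indicators $g=\mathbf 1_B$ (which is exactly the defining property of $h_B$), extended to non-negative simple functions by linearity, and to arbitrary Borel $g:X\to[0,+\infty]$ by monotone convergence.
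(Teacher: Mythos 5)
The paper does not prove this statement: Theorem \ref{disintegration_thm} is recalled in Appendix \ref{measure_theory} as a known result, with an explicit pointer to the literature (``see, for instance, [Theorem 8.5, 2013]''), so there is no internal proof to compare against. Judged on its own, your outline is the classical textbook construction and is essentially sound. The existence half --- Radon--Nikodym densities $h_B$ of $A\mapsto\mu\left(B\cap\pi^{-1}(A)\right)$ with respect to $\nu$, finite additivity on a countable generating algebra off a pooled $\nu$-null set, and the upgrade to $\sigma$-additivity via inner compact approximation --- is the standard route, and you correctly identify where the Radon hypothesis enters. Two implementation points deserve to be made explicit: the compact sets $K_n$ inner-approximating each $B\in\mathcal A$ must be adjoined to the algebra (the enlarged algebra stays countable), and the key a.e.\ identity $h_B=\sup_n h_{K_n}$ follows from $\int_Y h_B\,d\nu=\mu(B)=\sup_n\mu(K_n)=\sup_n\int_Y h_{K_n}\,d\nu$ together with $h_{K_n}\le h_B$ $\nu$-a.e.; only then does the compact-class criterion give $\sigma$-additivity for $\nu$-a.e.\ $y$ and Carath\'eodory the extension. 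Likewise, after $\nu_y$ is built, the identity $\nu_y(B)=h_B(y)$ $\nu$-a.e.\ must be propagated from the algebra to all Borel $B$ by a monotone class argument before you can read off $h_{\pi^{-1}(A_k)}=\mathbf{1}_{A_k}$ $\nu$-a.e.\ on a countable basis $\{A_k\}$ of $Y$ and intersect fibers as you describe; both steps are routine and your final passage from indicators to general $g\ge 0$ by simple functions and monotone convergence is correct. (Note also that the a.e.\ quantifier in the concentration property is with respect to $\nu=\pi\sharp\mu$, which is what your argument actually produces; the statement's ``$\mu$-a.e.\ $y\in Y$'' is a typographical slip in the paper.)

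The one step that does not work as literally written is uniqueness. You assert that ``for every $g\in\CC_b(X)$ the integration formula forces $\int_X g\,d\nu_y=\int_X g\,d\nu'_y$ for $\nu$-a.e.\ $y$,'' but the formula as stated only yields equality \emph{after} integration against $\nu$, namely
\begin{equation*}
\int_Y\left(\int_X g\,d\nu_y\right)\nu(dy)=\int_Y\left(\int_X g\,d\nu'_y\right)\nu(dy),
\end{equation*}
and two distinct densities can well have equal integrals. The fix is to localize: apply the integration identity to the nonnegative Borel map $g\cdot\left(\mathbf{1}_A\circ\pi\right)$ for an arbitrary Borel set $A\subset Y$, and use the concentration property $\nu_y\left(X\setminus\pi^{-1}(y)\right)=0$ to rewrite $\int_X g\,(\mathbf{1}_A\circ\pi)\,d\nu_y=\mathbf{1}_A(y)\int_X g\,d\nu_y$; equality of $\int_A\left(\int_X g\,d\nu_y\right)\nu(dy)$ and $\int_A\left(\int_X g\,d\nu'_y\right)\nu(dy)$ for all Borel $A$ then gives $\int_X g\,d\nu_y=\int_X g\,d\nu'_y$ for $\nu$-a.e.\ $y$, after which your countable separating family in $\CC_b(X)$ pools the null sets exactly as you indicate. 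This is a two-line repair, but without it the uniqueness paragraph is a non sequitur, and it is also the only place where the concentration property is genuinely needed in that half of the argument.
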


\section{Completion of the Proof of Proposition \ref{3.7_1_1}} \label{Completion of the Proof of Proposition 3.7_1_1}
Define the function $g: \Gamma_T^R \rightarrow \bar{\R}$ by
$$
\gamma \mapsto \int_0^T \int_{\R^n} \left\vert u \right\vert^q \hat{q}_{t,\gamma(t)} \left(d u \right) dt,
$$
which is a positive measurable function. For any Borel subset $V \subset \Gamma^R_T \times \PP^R_U$, there always exists a measure $\mu_{V} \in \PP^R_U$ such that
\begin{align*}
& \int_{\Gamma^R_T \times \PP^R_U} \int_0^T \int_{\R^n} \mathbf{1}_{V} (\gamma, \mu) \left\vert u \right\vert^q \mu (dt,du) P \left( d \gamma, d \mu \right) \\
\leq & P(V) \int_0^T \int_{\R^n} \left\vert u \right\vert^q \mu_V (dt,du) \leq P(V) R.
\end{align*}
Moreover, we have
\begin{align*}
& \int_{\Gamma^R_T } \mathbf{1}_{\pi_1(V)} (\gamma) \, g \left( \gamma \right) \pi_1 \sharp P \left( d \gamma \right) \\	
= & \int_{\Gamma^R_T } \int_0^T \int_{\R^n} \mathbf{1}_{\pi_1(V)} (\gamma) \left\vert u \right\vert^q \hat{q}_{t,\gamma(t)} \left(d u \right)\, dt \,\pi_1 \sharp P \left( d \gamma \right) \\
%= &  \int_0^T \int_{\T^d} \int_{\R^n} \mathbf{1}_{\pi_1(V)} \left( e_t^{-1}(x) \right) \left\vert u \right\vert^q \hat{q}_{t,x} (du) \, e_t \sharp \left( \pi_1 \sharp P \right) (dx)\, dt\\
= & T \int_0^T \int_{\T^d} \int_{\R^n} \mathbf{1}_{\pi_1(V)} \left( e_t^{-1}(x) \right) \left\vert u \right\vert^q \hat{q}_{t,x} (du) \, \eta_{1,2} (dt,dx)\\
= & T \int_0^T \int_{\T^d} \int_{\R^n} \mathbf{1}_{\pi_1(V)} \left( e_t^{-1}(x) \right) \left\vert u \right\vert^q \eta(dt,dx,du)\\
= & \int_{\Gamma^R_T \times \PP^R_U} \int_0^T \int_{\R^n} \mathbf{1}_{ \pi_1(V) } (\gamma) \left\vert u \right\vert^q \mu (dt,du) \,P \left( d \gamma, d \mu \right) \leq R \, P(V),
\end{align*}
indicating that $g$ is integrable.
By Vitali-Carath\'eodory theorem, for any $\varepsilon >0$, there exists a lower semi-continuous function $\hat{g}: \Gamma_T^R \rightarrow \R$ with $g(\gamma) \leq \hat{g} (\gamma)$ for $\pi_1 \sharp P$-$a.e. \,\, \gamma \in \Gamma_T^R$ such that
$$
\int_{\Gamma_T^R} \hat{g}(\gamma) - g(\gamma) \,\, \pi_1 \sharp P(d \gamma) \leq \varepsilon.
$$
For $\pi_1 \sharp P$-$a.e. \,\, \gamma_0 \in \supp \left(\pi_1 \sharp P \right)$, there exists a unique measure $\mu^0 \in \PP^R_U$ such that the pair $\left( \gamma_0, \mu^0 \right) \in \supp(P)$ by Proposition \ref{properties_PR_1}(4).
Denote by $V$ the open neighborhood of $\left( \gamma_0, \mu^0 \right)$, we have $P(V) > 0$ and
\begin{align*}
g(\gamma_0) 
& \leq \hat{g} (\gamma_0) \leq \liminf_{V \rightarrow \left( \gamma_0, \mu^0 \right)} \frac{1}{P(V)}\int_{V} \hat{g}(\gamma) P \left( d\gamma, d\mu \right) \\
& \leq \liminf_{V \rightarrow \left( \gamma_0, \mu^0 \right)} \frac{1}{P(V)}\int_{V} g(\gamma) P \left( d\gamma, d\mu \right) + \varepsilon \leq R+ \varepsilon.
\end{align*}
Let $\varepsilon$ tend to 0. We conclude that $\LL_{[0,T]} \otimes \hat{q}_{t, \gamma(t)} \in \PP^R_U$ for $\pi_1 \sharp P$-$a.e. \,\, \gamma \in \supp \left(\pi_1 \sharp P \right)$. Let $\LL_{[0,T]} \otimes \hat{q}_{t, \gamma(t)}$ equal any measure that belongs to $\PP^R_U$ when $\gamma$ belongs to the null set.
We can conclude that $\hat{P} \in \PP \left( \Gamma^R_T \times \PP^R_U \right)$ since the difference in the null set has no influence on the following analysis by (F\ref{f1_1}) and (L\ref{L3_1}).

It is clear that $e_0 \sharp \left( \pi_1 \sharp \hat{P} \right) = e_0 \sharp \left( \pi_1 \sharp P \right) = m_0$.
Moreover, for any $t \in [0,T]$ and any open set $N \subset \Gamma^R_T \times \PP^R_U$, we can deduce that
\begin{equation*}
\begin{aligned} 
	& \int_{\Gamma^R_T } \int_0^t \int_{\R^n} \mathbf{1}_{\pi_1(N)} (\gamma) f \left( \gamma(s), u ,m(s)\right) \hat{q}_{s,\gamma(s)} \left(d u \right)\, ds \,\pi_1 \sharp P \left( d \gamma \right) \\
	= &  \int_0^t \int_{\T^d} \int_{\R^n} \mathbf{1}_{\pi_1(N)} \left( e_s^{-1}(x) \right) f \left( x, u ,m(s)\right) \hat{q}_{s,x} (du)\,e_s \sharp \left( \pi_1 \sharp P \right) (dx)\, ds\\
%	= & T \int_0^T \int_{\T^d} \int_{\R^n} \mathbf{1}_{\pi_1(N)} \left( e_s^{-1}(x) \right) \mathbf{1}_{[0,t]} (s) f \left( x, u,m(s) \right) \hat{q}_{s,x} (du) \, \eta_{1,2} (ds,dx)\\
	= & T \int_0^T \int_{\T^d} \int_{\R^n} \mathbf{1}_{\pi_1(N)} \left( e_s^{-1}(x) \right) \mathbf{1}_{[0,t]}(s) f \left( x, u,m(s) \right) \eta(ds,dx,du)\\
%	= & \int_{\Gamma^R_T \times \PP^R_U} \int_0^T \int_{\R^n} \mathbf{1}_{ \pi_1(N) } (\gamma) \mathbf{1}_{[0,t]}(s) f \left( \gamma(s), u,m(s) \right) \mu (ds,du) \,P \left( d \gamma, d \mu \right) \\
	= & \int_{\pi_1(N) \times \PP^R_U} \int_0^t \int_{\R^n} f \left( \gamma(s), u ,m(s)\right) \mu (ds,du) \,P \left( d \gamma, d \mu \right).
\end{aligned}
\end{equation*}
Based on this equality, for any $v \in \R^d$, we have
\begin{align*}
	& \int_{N} \left\langle v, \gamma(t) - \gamma(0) - \int_0^t \int_{\R^n} f \left( \gamma(s), u ,m(s) \right) \mu \left( ds,du \right) \right\rangle \hat{P} \left( d\gamma, d \mu \right)   \\
	= & \int_{\Gamma_T^R \times \PP_{U}^{R}} \mathbf{1}_{N} \left( \gamma, \mu \right) \left\langle v, \gamma(t) - \gamma(0) - \int_0^t \int_{\R^n} f \left( \gamma(s), u ,m(s) \right) \mu \left( ds,du \right) \right\rangle \hat{P} \left( d\gamma, d \mu \right)   \\
	= & \int_{\Gamma^R_T } \mathbf{1}_{\pi_1(N)} \left( \gamma \right) \left \langle v, \gamma(t) - \gamma(0) - \int_0^t \int_{\R^n} f \left( \gamma(s), u,m(s) \right) \hat{q}_{s, \gamma(s)} \left( du \right) ds \right\rangle \pi_1 \sharp P \left( d\gamma \right) \\
	= &  \int_{\pi_1(N) \times \PP^R_U} \left \langle v, \gamma(t) - \gamma(0) - \int_0^t \int_{\R^n} f \left( \gamma(s), u ,m(s) \right) \mu \left( ds,du \right) \right \rangle P \left( d\gamma, d \mu \right) =0.
\end{align*}
Thus, we obtain that $\hat{P} \in \PP_R(m)$.

\medskip
\medskip

%%%%%%%%%%%%%%%%%%%%%%%%%%%%%%%%%%%%
\noindent {\bf Statements and Declarations.} 
This paper is the authors' original work and has not been
published or submitted simultaneously elsewhere.

\medskip

\noindent {\bf Competing Interests.}
Authors have no financial interests that are directly related to the work submitted for publication. We have no conflicts of interest to disclose.

\medskip

\noindent {\bf The Data Availability Statement.}
No datasets were generated or analysed during the current study.

\medskip

\noindent {\bf Acknowledgements.} 
Cristian Mendico was partially supported by Istituto Nazionale di Alta Matematica, INdAM-GNAMPA project 2023/2024, by the MIUR Excellence Department Project MatMod@TOV awarded to the Department of Mathematics, University of Rome Tor Vergata, CUP E83C23000330006, and by the King Abdullah University of Science and Technology (KAUST) project CRG2021-4674 "Mean-Field Games: models, theory and computational aspects". Kaizhi Wang is supported by NSFC Grant No. 12171315, 11931016, and Natural Science Foundation of Shanghai No. 22ZR1433100.

\medskip
%%%%%%%%%%%%%%%%%%%%%%%%%%%%%%%%%%%%
\bibliographystyle{plain}
\bibliography{references}

\end{document}